\newcommand\bes{\begin{eqnarray}}
\newcommand\ees{\end{eqnarray}}
\newcommand\R{\mathbb R}
\newtheorem{theorem}{Theorem}[section]
\newtheorem{lemma}[theorem]{Lemma}
\newtheorem{corollary}[theorem]{Corollary}
\newtheorem{remark}[theorem]{Remark}
\newtheorem{proposition}[theorem]{Proposition}
\numberwithin{equation}{section}
\begin{document}

\title[A nonlocal diffusion model
with free boundaries]{The dynamics of a Fisher-KPP nonlocal diffusion model with free boundaries}
\author[J.F. Cao, Y. Du, F. Li and W.T. Li]{Jia-Feng Cao$^\dag$, Yihong Du$^\ddag$, Fang Li$^\S$ and Wan-Tong Li$^\dag$}
\thanks{\hspace{-.6cm} $^\dag$ School of Mathematics and Statistics, Lanzhou University,
Lanzhou, Gansu, 730000,\\
\mbox{\hspace{.2cm} People's Republic of China.}
\\
$^\ddag$ School of Science and Technology, University of New England, Armidale, NSW 2351, Australia.
\\
$^\S$ ({\sf Corresponding author}) School of Mathematics,
Sun Yat-sen University,   Guangzhou, Guangdong, 510275,\\
\mbox{\hspace{.2cm}  People's Republic of China}}

\date{\today}

\maketitle

\begin{abstract}
We introduce and study a class of free boundary models with ``nonlocal diffusion'', which are natural extensions of
the free boundary models in \cite{DYH2010} and elsewhere, where ``local diffusion'' is used to describe the population dispersal, with the free boundary representing the spreading front of the species.
We show that this nonlocal problem has a unique solution defined for all time, and then examine its long-time dynamical behavior when the growth function is of Fisher-KPP type. We prove that a spreading-vanishing dichotomy holds, though for the spreading-vanishing criteria significant differences arise from the well known local diffusion model in \cite{DYH2010}.

\bigskip

\textbf{Keywords}: Nonlocal diffusion; Free
boundary;  Existence-uniqueness; Spreading-vanishing
dichotomy

\textbf{AMS Subject Classification (2000)}: 35K57,
35R20, 92D25

\end{abstract}

\section{Introduction}
\noindent

In this paper we introduce and study a free boundary model with ``nonlocal diffusion'' (to be described in detail later), which may be viewed as an extension of the following
free boundary model with ``local diffusion'':
\begin{equation}
\left\{
\begin{aligned}
&u_t-du_{xx}=f(u),& &t>0,~g(t)<x<h(t),\\
&u(t,g(t))=u(t,h(t))=0,& &t>0,\\
&g'(t)=-\mu u_x(t,g(t)),\; h'(t)=-\mu u_x(t,h(t)),& &t>0,\\
&g(0)=g_0,\; h(0)=h_0,~u(0,x)=u_0(x),& &g_0\le x\le h_0,
\end{aligned}
\right.
\label{1}
\end{equation}
where $f$ is a $C^1$ function satisfying $f(0)=0$,
$\mu>0$ and $g_0<h_0$ are  constants, and $u_0$ is a $C^2$ function which is positive in $(g_0, h_0)$ and
vanishes at $x=g_0$ and $x=h_0$. For logistic type of $f(u)$, \eqref{1} was first studied in \cite{DYH2010}, as a model for the spreading of a new or invasive species with population density $u(t,x)$, whose population range $(g(t), h(t))$ expands through its boundaries $x=g(t)$ and $x=h(t)$ according to the Stefan conditions
 $g'(t)=-\mu u_x(t, g(t)),\; h'(t)=-\mu u_x(t,h(t))$.
A deduction of these conditions based on some ecological assumptions can be found in \cite{Bunting}.

It was shown in\cite{DYH2010} that problem (\ref{1})
admits a unique solution $(u(t,x), g(t), h(t))$ defined for all $t>0$. Its long-time dynamical behavior is characterized by a ``spreading-vanishing dichotomy'':
Either $(g(t), h(t))$ is contained in a bounded set of $\mathbb R$ for all $t>0$ and $u(t,x)\to 0$ uniformly as $t\to\infty$ (called the vanishing case),
or $(g(t), h(t))$ expands to $\mathbb R$ and $u(t,x)$ converges to the unique positive steady state of the ODE $v'=f(v)$ locally uniformly in $x\in\mathbb R$ as $t\to\infty$ (the spreading case). Moreover, when spreading occurs,
\[
\lim_{t\to\infty} \frac{-g(t)}{t}=\lim_{t\to\infty}\frac{h(t)}{t}=k_0>0,
\]
and $k_0$ is uniquely determined by a traveling wave equation associated to \eqref{1}.

These results have been extended to cases with more general $f(u)$ in \cite{DYH2015, K-Yamada}, and more accurate estimates for $g(t), h(t)$ and $u(t,x)$ for the spreading case have been obtained  in \cite{DMZ2014}.  Among the many further extensions, we only mention
the extension to various heterogeneous environments in \cite{DYH2013JFA, DyhLx2015, LLS16-1, LLS16-2, DDL-1, DDL-2, WangMX2016JFA}, and extensions to certain Lotka-Volterra two-species systems and epidemic models
 in \cite{DYH2014, DWZ, GJS2012,wangmx1} and \cite{CaoJFZAMP2017, GeJing2015, LZ2017} (see also the references therein).

Problem \eqref{1} is closely related to the following associated Cauchy problem
\begin{equation}
\left\{
\begin{aligned}
&U_t-dU_{xx}=f(U),& &t>0,~x\in\mathbb R,\\
&U(0,x)=U_0(x),& & x\in \mathbb R,
\end{aligned}
\right.
\label{Cauchy-local}
\end{equation}
Indeed, it follows from \cite{DYH2012} that, if the initial functions are the same, i.e., $u_0=U_0$, then the unique solution $(u, g, h)$ of \eqref{1} and the unique solution $U$ of \eqref{Cauchy-local} are related in the following way: For any fixed $T>0$,
as $\mu\to\infty$, $(g(t), h(t))\to \mathbb R$ and $u(t,x)\to U(t,x)$ locally uniformly in $(t,x)\in(0, T]\times \mathbb R$. Thus \eqref{Cauchy-local} may be viewed as the limiting problem of \eqref{1} (as $\mu\to\infty$).

Problem \eqref{Cauchy-local} with $U_0$ a nonnegative function having nonempty compact support has long been used to describe the spreading of  a new or invasive species; see, for example, classical works \cite{Aronson1978, Fisher1937, KPP}. In both \eqref{1} and \eqref{Cauchy-local}, the dispersal of the species is described by the diffusion term
$d u_{xx}$, widely known as a ``local diffusion'' operator. It has been increasingly recognized that the dispersal of many species is better described by ``nonlocal diffusion'' rather than local diffusion as used in \eqref{1} and \eqref{Cauchy-local} (see, e.g., \cite{Nathan12}). An extensively used nonlocal diffusion operator
to replace the local diffusion term $d u_{xx}$ is given by
\[
d(J * u-u)(t,x):=d\left[\int_{\mathbb R} J(x-y)u(t,y)dy-u(t,x)\right],
\]
where $J:\R\setminus\{0\}\to\R$ is a continuous nonnegative even function satisfying
\[
2\int_0^\infty J(x)dx=1.
\]
The corresponding nonlocal version of \eqref{Cauchy-local} is thus given by
\begin{equation}
\left\{
\begin{aligned}
&u_t=d\left[\int_{\R}J(x-y)u(y,t)dy-u(x,t)\right]+f(u), && t>0, \; x\in\R,\\
&u(x,0)=u_0(x), && x\in\R.
\end{aligned}
\right.
\label{113}
\end{equation}

Problem \eqref{113} and its many variations have been extensively studied in recent years; see, for example,
\cite{Bates1997, BerestyckiJMB2016, ChenXF1997, Coville2007Proceedings, Hutson2003JMB,Kao2010DCDS, Rawal2015DCDS,Shen-XiexiaoxiaJDE2015, SunYJ2010NARWA,
SunYJ2011JDE, ZhangAiJun2010JDE, ZhangLiWang2012} and the references therein. In particular,
several classical results on \eqref{Cauchy-local} as a model for spreading of species have been successfully extended to \eqref{113} and its variations.

In contrast, the nonlocal version of \eqref{1} has not been considered so far. The main purpose of this paper is to propose a nonlocal version of \eqref{1} and then study its well-posedness and long-time dynamical behavior. We note that a solution $u(t,x)$ of \eqref{113}  is not differentiable in $x$ in general, and therefore
it is reasonable to expect that the Stefan conditions in \eqref{1} do not extend readily to the nonlocal diffusion case. Moreover, when \eqref{113} is restricted to a bounded interval $[a, b]$ for the space variable $x$,  it is well known that
for $t>0$, $u(t,x)>0$ for $x\in\{a,\, b\}$ and the values of $u(t,a)$ and $u(t, b)$ are implicitly determined by the equation itself. As we will see below, this will no longer be the case
for the corresponding free boundary problem at the free boundaries $x=g(t)$ and $x=h(t)$.

The nonlocal version of \eqref{1} we propose in this paper has the following form:
\begin{equation}
\left\{
\begin{aligned}
&u_t=d\int_{g(t)}^{h(t)}J(x-y)u(t,y)dy-du(t,x)+f(t,x,u),
& &t>0,~x\in(g(t),h(t)),\\
&u(t,g(t))=u(t,h(t))=0,& &t>0,\\
&h'(t)=\mu\int_{g(t)}^{h(t)}\int_{h(t)}^{+\infty}
J(x-y)u(t,x)dydx,& &t>0,\\
&g'(t)=-\mu\int_{g(t)}^{h(t)}\int_{-\infty}^{g(t)}
J(x-y)u(t,x)dydx,& &t>0,\\
&u(0,x)=u_0(x),~h(0)=-g(0)=h_0,& &x\in[-h_0,h_0],
\end{aligned}
\right.
\label{101}
\end{equation}
where $x=g(t)$ and $x=h(t)$ are the moving boundaries
to be determined together with $u(t,x)$, which is always assumed to be identically 0 for $x\in \R\setminus [g(t), h(t)]$; $d$ and $\mu$
 are positive constants. The initial
function $u_0(x)$ satisfies
\begin{equation}
u_0(x)\in C([-h_0,h_0]),~u_0(-h_0)=u_0(h_0)=0
~\text{ and }~u_0(x)>0~\text{ in }~(-h_0,h_0),
\label{102}
\end{equation}
with  $[-h_0,h_0]$ representing the initial population range of the species. We assume that the kernel function
$J: \mathbb{R}\rightarrow\mathbb{R}$ is continuous and nonnegative,
and has the properties
\begin{description}
\item[(J)]$J(0)>0,~\int_{\mathbb{R}}J(x)dx=1$, $J$  is symmetric, $\sup_{\R}J<\infty$.
\end{description}
The growth term $f: \mathbb{R}^+\times\mathbb{R}\times\mathbb{R}^+
\rightarrow\mathbb{R}$ is assumed to be continuous and satisfies
\begin{description}
\item[(f1)] $f(t,x,0)\equiv 0$ and $f(t,x,u)$
is locally Lipschitz in $u\in\R^+$, i.e., for any $L>0$, there

 \hspace{0.1cm} exists a constant $K=K(L)>0$
such that
\[\hspace{0.8cm}
\mbox{$\left|f(t,x,u_1)-f(t,x,u_2)\right|\le K|u_1-u_2|$ for $u_1, u_2\in [0, L]$ and $(t,x)\in \R^+\times \R$;}
\]
\end{description}
\begin{description}
\item[(f2)] There exists
$K_0>0$ such that $f(t,x,u)<0$ for $u\ge K_0$ and $(t,x)\in \R^+\times \R$.
\end{description}

\medskip

Let us now explain the meaning of the free boundary conditions in \eqref{101}. According to the nonlocal dispersal rule governed
by the dispersal kernel
$J(x-y)$, the total population mass moved out of the range $[g(t), h(t)]$ at time $t$ through its right boundary $x=h(t)$ per unit time is given by
\[
\int_{g(t)}^{h(t)}\int_{h(t)}^\infty J(x-y)u(t,x)dydx.
\]
As we assume that $u(t,x)=0$ for $x\not\in [g(t), h(t)]$, this quantity of mass is lost in the spreading process of the species.
We may call this quantity the outward flux at $x=h(t)$ and denote it by $J_h(t)$.
Similarly we can define the outward flux at $x=g(t)$ by
\[
J_g(t):=\int_{g(t)}^{h(t)}\int_{-\infty}^{g(t)}J(x-y)u(t,x)dydx.
\]
Then the free boundary conditions in \eqref{101} can be interpreted as assuming that the expanding rate of the front
is proportional to the outward flux, by a factor $\mu>0$:
\[
g'(t)=-\mu J_g(t),\; h'(t)=\mu J_h(t).
\]

So the population range expands at a rate proportional to the total population across the front. For a plant species, seeds  carried across the range boundary may fail to establish due to numerous reasons, such as isolation from other members of the species causing poor or no pollination, or causing overwelming attacks from enemy species. However, some of those not very far from the range boundary may survive, which results in the expansion of the population range. It is reasonable to assume that this survival rate is roughly a constant for a given species, and therefore the population range should expand at a rate proportional to the total population carried across the front. For an animal species, a simillar consideration can be applied. We would like to emphasize  that for most species, its living environment involves many factors, not only the resources such as food or nutrient supplies. For example,  complex interactions of the concerned species with many other species in the same spatial
habitat constantly occur, yet it is impossible to include all of them (even the majority of them) into a manageable model, and best treat them, or rather their combined effects, as part of the environment of the concerned species.

\bigskip

We note that in \eqref{101}, the  growth function $f$ is more general than that in \eqref{1} where $f$ is independent of $(t,x)$. Our results in this paper on global existence and uniqueness of the solution to \eqref{101} will be proved under the conditions {\bf (J), (f1)} and {\bf (f2)} only. More precisely, we have the following result:
\begin{theorem}\label{thm1}
Under the conditions {\bf (J), (f1)} and {\bf (f2)}, for every $u_0$ satisfying \eqref{102}, problem
\eqref{101} has a unique solution $(u,g,h)$ defined for all $t>0$.
\end{theorem}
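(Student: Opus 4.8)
The plan is to establish local-in-time existence and uniqueness by a fixed-point argument in which the moving boundaries play the role of the unknowns, and then to promote this to global existence by means of a priori bounds. Fix $T>0$; for a constant $L_0>0$ to be chosen later, let $\mathcal X_T$ be the set of pairs $(g,h)$ with $g,h\in C([0,T])$, $g(0)=-h_0$, $h(0)=h_0$, $h$ nondecreasing, $g$ nonincreasing, and $|h(t)-h(s)|+|g(t)-g(s)|\le L_0|t-s|$ for all $t,s\in[0,T]$; with the supremum metric this is a complete metric space. Given $(g,h)\in\mathcal X_T$ I first solve the $u$-equation on the time-dependent domain $D_T:=\{(t,x):0<t\le T,\ g(t)<x<h(t)\}$, then feed the resulting $u$ into the flux formulas to define a map $\Gamma:(g,h)\mapsto(\tilde g,\tilde h)$, and look for a fixed point of $\Gamma$.

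\textbf{Step 1 (solving for $u$ with $(g,h)$ fixed).} Since the first equation of \eqref{101} contains no $x$-derivative of $u$, after extending $u$ by $0$ outside $[g(t),h(t)]$ it reads $u_t+du=d(J*u)+f(t,x,u)$ on $D_T$, where $(J*u)(t,x)=\int_\R J(x-y)u(t,y)\,dy=\int_{g(t)}^{h(t)}J(x-y)u(t,y)\,dy$. A point $x\in[-h_0,h_0]$ lies in the domain for all $t$, while a point $x$ with $|x|>h_0$ enters the domain at the time $\tau_x$ determined by $h(\tau_x)=x$ (when $x>h_0$) or $g(\tau_x)=x$ (when $x<-h_0$), at which moment $u(\tau_x,x)=0$ by the boundary condition; integrating in $t$ yields the Volterra-type equation
\[
u(t,x)=\psi(t,x)+\int_{\tau_x}^{t}e^{-d(t-s)}\Big[d\!\int_{g(s)}^{h(s)}\!\!J(x-y)u(s,y)\,dy+f\big(s,x,u(s,x)\big)\Big]\,ds,
\]
with $\tau_x=0$ and $\psi(t,x)=e^{-dt}u_0(x)$ for $x\in[-h_0,h_0]$, and $\psi\equiv0$ otherwise. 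Using {\bf (J)}, {\bf (f1)} and the contraction mapping theorem in $C(\overline{D_{T'}})$ this has a unique solution for small $T'>0$; a comparison principle for the operator $d(J*u)-du$ on $D_T$, combined with the constant supersolution $M:=\max\{K_0,\|u_0\|_\infty\}$ (admissible because $\int_{g(t)}^{h(t)}J(x-y)\,dy\le1$ and $f(t,x,M)\le0$ by {\bf (f2)}) and the subsolution $0$, gives $0\le u\le M$, a bound independent of $T'$, so step-by-step continuation yields $u$ on all of $[0,T]$. Writing this integral equation for two boundary pairs and applying Gronwall's inequality (after extending both solutions by $0$ to $\R$) produces the Lipschitz estimate
\[
\big\|u_{(g_1,h_1)}-u_{(g_2,h_2)}\big\|_{C([0,T]\times\R)}\le C(T)\big(\|g_1-g_2\|_\infty+\|h_1-h_2\|_\infty\big).
\]

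\textbf{Step 2 (fixed point and global continuation).} Define $\Gamma(g,h)=(\tilde g,\tilde h)$ by
\[
\tilde h(t)=h_0+\mu\!\int_0^t\!\!\int_{g(s)}^{h(s)}\!\!\int_{h(s)}^{\infty}\!\!J(x-y)u(s,x)\,dy\,dx\,ds,\quad \tilde g(t)=-h_0-\mu\!\int_0^t\!\!\int_{g(s)}^{h(s)}\!\!\int_{-\infty}^{g(s)}\!\!J(x-y)u(s,x)\,dy\,dx\,ds,
\]
with $u=u_{(g,h)}$ from Step 1. The integrands are nonnegative, so $\tilde h$ is nondecreasing and $\tilde g$ nonincreasing; since $0\le u\le M$ and $\int_{h(s)}^{\infty}J(x-y)\,dy\le1$, the velocities of $\tilde h,\tilde g$ are bounded by $\mu M(h(s)-g(s))\le\mu M(2h_0+2L_0T)$, so choosing $L_0$ large (in terms of $\mu,M,h_0$) and then $T$ small makes $\Gamma$ map $\mathcal X_T$ into itself. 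Combining the Lipschitz estimate from Step 1 with elementary estimates for the variation of the domains of integration (using $\sup_\R J<\infty$ and $u\le M$), one checks that, after possibly shrinking $T$, $\Gamma$ is a contraction; its unique fixed point is a solution $(u,g,h)$ of \eqref{101} on $[0,T]$, and uniqueness of the solution on $[0,T]$ follows from the uniqueness built into the two contractions. For global existence: on any interval of existence one has $0\le u\le M$, hence $h'(t)\le\mu M(h(t)-g(t))$ and $-g'(t)\le\mu M(h(t)-g(t))$, so $h(t)-g(t)\le2h_0e^{2\mu Mt}$ by Gronwall; thus $g,h$ stay finite and remain Lipschitz on finite intervals, $u$ stays bounded and (by the integral equation) Lipschitz in $t$, and $(u(t,\cdot),g(t),h(t))$ admits a limit as $t$ tends to any supposed finite maximal time $T^*$. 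Since $f(t,x,u)\ge-Ku$ and $J*u\ge0$ give $u_t\ge-(d+K)u$, a propagation argument exploiting $J(0)>0$ shows $u(T^*,\cdot)>0$ on $(g(T^*),h(T^*))$, so the state at $T^*$ again satisfies a condition of the form \eqref{102} and the local result extends the solution past $T^*$ — a contradiction. Hence the solution exists for all $t>0$.

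\textbf{Main obstacle.} The principal difficulty is the bookkeeping forced by the moving domain in Step 1: one must keep track of the entry times $\tau_x$, verify that $u$ is continuous up to the free boundary with the prescribed zero values there, and — crucially for the contraction in Step 2 — compare the solutions associated with two different boundary pairs, which are defined on two different domains, and extract from this the Lipschitz dependence on the boundary data. A related technical ingredient is establishing the comparison principle and the strong maximum (propagation) principle for the nonlocal operator $d(J*u)-du$ on a time-dependent interval, which take the place of the classical parabolic maximum principles available for the local-diffusion model \eqref{1}.
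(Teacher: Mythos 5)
Your overall architecture (freeze $(g,h)$, solve for $u$, then run a fixed point on the boundary map defined by the flux formulas) is the same as the paper's, but there is a genuine gap at exactly the point you flag as the main obstacle: the Lipschitz estimate at the end of Step 1,
\[
\big\|u_{(g_1,h_1)}-u_{(g_2,h_2)}\big\|_{C([0,T]\times\R)}\le C(T)\big(\|g_1-g_2\|_\infty+\|h_1-h_2\|_\infty\big),
\]
is false on your space $\mathcal X_T$, and Gronwall cannot produce it. Gronwall controls the difference on the common domain, but not in the mismatch region between the two free boundaries. Take $h_1\equiv h_0$, $g_1\equiv-h_0$, and $h_2(t)=h_0+\epsilon\min\{2t/T,1\}$, $g_2=-h_2$; both pairs lie in $\mathcal X_T$. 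For $x^*\in(h_0,h_0+\epsilon)$ one has $u_{(g_1,h_1)}(t,x^*)\equiv0$ after the zero extension, whereas $u_{(g_2,h_2)}(t,x^*)$ starts from $0$ at its entry time $\le T/2$ and satisfies $\partial_t u_{(g_2,h_2)}(t,x^*)\ge d\int_{-h_0}^{h_0}J(x^*-y)u_{(g_2,h_2)}(t,y)\,dy-(d+K)u_{(g_2,h_2)}(t,x^*)$; since $u_{(g_2,h_2)}\ge e^{-(d+K)t}u_0>0$ on $[-h_0,h_0]$ and $J(0)>0$, the integral term is bounded below by a positive constant near $x^*=h_0$, so $u_{(g_2,h_2)}(T,x^*)\ge c(T)>0$ with $c(T)$ independent of $\epsilon$. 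The left-hand side of your estimate thus stays bounded below while the right-hand side is $C(T)\epsilon\to0$, so the contraction in Step 2 collapses.

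What rescues the argument --- and this is precisely the key extra ingredient in the paper's proof --- is that the boundary map does not merely produce monotone Lipschitz boundaries: because $u\ge e^{-(d+K(M_0))t}u_0$ on $[-h_0,h_0]$ and $J(0)>0$, one gets for small $T$ the strict expansion bounds $\tilde h'(t)\ge\mu c_0>0$ and $\tilde g'(t)\le-\mu\tilde c_0<0$ with $c_0,\tilde c_0$ depending only on $(J,u_0,f)$, and one must work on the smaller invariant set of pairs satisfying these bounds. On that set the mismatch region is handled not by Gronwall but by converting time into space: if $h_1(t^*)\le x^*<h_2(t^*)$, write $u_2(t^*,x^*)=\int_{t_2^*}^{t^*}\partial_t u_2(t,x^*)\,dt\le C(t^*-t_2^*)$ with $h_2(t_2^*)=x^*$, and then $t^*-t_2^*\le(\mu c_0)^{-1}\big[h_2(t^*)-h_2(t_2^*)\big]\le(\mu c_0)^{-1}\|h_1-h_2\|_\infty$; an analogous conversion is needed to compare the two entry times at points covered by both domains. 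Note also that your uniqueness claim inherits the same issue: the contractions only give uniqueness within the chosen class, so one must additionally verify (as the paper does) that \emph{any} solution of \eqref{101} has boundaries lying in the restricted invariant set. Without adding the lower bound on $h'$ and $-g'$ and redoing the mismatch estimate accordingly, the proposal does not close; with it, your scheme becomes essentially the paper's proof.
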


For the long-time dynamical behavior of \eqref{101}, for simplicity, we will add more restrictions on  $f$. More precisely, we will require additionally

\begin{description}
\item[(f3)] $f=f(u)$ is independent of $(t,x)$, and
$\frac{f(u)}{u}$ is strictly decreasing for $u\in \mathbb{R}^+$,
\item[(f4)]  $f'(0)$ exists and $f'(0)>0$.
\end{description}

A prototype of growth functions $f(u)$ satisfying {\bf (f1)-(f4)} is given by $f(u)=au-bu^2$ with $a$ and $ b$ positive constants.
We have the following conclusions:

\begin{theorem}[Spreading-vanishing dichotomy]\label{thm2}
Suppose {\bf (J)} and {\bf (f1)-(f4)} hold, and $u_0$ satisfies \eqref{102}. Let $(u,g,h)$ be the unique solution of problem
\eqref{101}. Then one of the following alternatives must happen for \eqref{101}:
\begin{itemize}
\item[(i)] \underline{\rm Spreading:} $\lim_{t\to+\infty} (g(t), h(t))=\mathbb R$ and $\lim_{t
\rightarrow+\infty}u(t,x)=v_0$ locally uniformly  in
$\mathbb{R}$, where $v_0$ is the unique positive zero of $f(u)$,
\item[(ii)] \underline{\rm Vanishing:} $\lim_{t\to+\infty} (g(t), h(t))=(g_\infty, h_\infty)$ is a finite interval and $\lim_{t
\rightarrow+\infty}u(t,x)=0$ uniformly for $x\in [g(t),h(t)]$.
\end{itemize}
\end{theorem}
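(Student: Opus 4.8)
The plan is to read the dichotomy off the monotone free boundaries, using one elementary comparison observation together with the spectral and steady-state theory of the \emph{nonlocal Dirichlet logistic problem} on a fixed interval. Since $u>0$ in $(g(t),h(t))$ and {\bf (J)} forces $J>0$ on some interval $(-\delta_0,\delta_0)$, the flux integrals in \eqref{101} are strictly positive, so $g$ is strictly decreasing and $h$ strictly increasing; hence $g_\infty:=\lim_{t\to\infty}g(t)\in[-\infty,-h_0)$ and $h_\infty:=\lim_{t\to\infty}h(t)\in(h_0,+\infty]$ exist. By {\bf (f2)} and the comparison principle, $0<u\le M:=\max\{\|u_0\|_\infty,K_0\}$ for all $t>0$ and $x\in(g(t),h(t))$, and $M>v_0$. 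Write $\lambda_p(\ell)$ for the principal eigenvalue of the operator $\phi\mapsto d\int_0^\ell J(\cdot-y)\phi(y)\,dy-d\phi+f'(0)\phi$ on an interval of length $\ell$; it is continuous on $(0,\infty)$ and $\lambda_p(\ell)\to f'(0)>0$ as $\ell\to\infty$, and when $\lambda_p(\ell)>0$ the nonlocal Dirichlet logistic problem on an interval of length $\ell$ has a unique positive steady state $V$ attracting every positive solution, whereas when $\lambda_p(\ell)\le0$ the only nonnegative steady state is $0$ and it is globally attracting. The key observation, which holds because $\int_{g(t)}^{h(t)}J(x-y)u(t,y)\,dy\ge\int_a^bJ(x-y)u(t,y)\,dy$ whenever $(a,b)\subseteq(g(t),h(t))$, is that \emph{the restriction of $u$ to a fixed subinterval is a supersolution of the nonlocal Dirichlet logistic problem on that interval}. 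Combining this with the above yields the crucial lemma: \emph{if $\lambda_p(h_\infty-g_\infty)>0$} (with the convention $\lambda_p(\infty):=f'(0)$), \emph{then $g_\infty=-\infty$ and $h_\infty=+\infty$}. Indeed, then $\lambda_p(h(T)-g(T))>0$ for all large $T$, so $\liminf_{t\to\infty}u(t,x)\ge V_{(g(T),h(T))}(x)>0$ on $(g(T),h(T))$; if one of $g_\infty,h_\infty$ were finite, choosing $T$ so large that in addition the corresponding free boundary lies within $\delta_0$ of its limit, this positive lower bound makes the corresponding outward flux $J_g(t)$ or $J_h(t)$ bounded below by a positive constant for all large $t$, forcing that boundary to diverge --- a contradiction.

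\emph{Vanishing case.} Suppose $h_\infty-g_\infty<\infty$, so $-\infty<g_\infty<h_\infty<+\infty$. By the lemma, $\lambda_p(h_\infty-g_\infty)\le0$. Let $\bar u$ solve the nonlocal Dirichlet logistic problem on the fixed interval $(g_\infty,h_\infty)$ with initial datum the extension of $u_0$ by $0$. Since $(g(t),h(t))\subseteq(g_\infty,h_\infty)$ and $u\equiv0$ outside $(g(t),h(t))$, $\bar u$ is a supersolution of \eqref{101} and the comparison principle gives $0\le u\le\bar u$; as $\lambda_p(h_\infty-g_\infty)\le0$, $\bar u(t,\cdot)\to0$ uniformly on $[g_\infty,h_\infty]$, hence $u(t,\cdot)\to0$ uniformly on $[g(t),h(t)]$. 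This is alternative (ii).

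\emph{Spreading case.} Suppose $h_\infty-g_\infty=\infty$. Then $\lambda_p(h_\infty-g_\infty)=f'(0)>0$, so by the lemma $g_\infty=-\infty$ and $h_\infty=+\infty$, i.e.\ $\lim_{t\to\infty}(g(t),h(t))=\mathbb R$. First, $\limsup_{t\to\infty}u(t,x)\le v_0$ uniformly in $x$: the spatially constant $V(t)$ solving $V'=f(V)$, $V(0)=M$, is a supersolution of \eqref{101} (the kernel mass over $(g(t),h(t))$ is at most $1$), so $u\le V(t)\downarrow v_0$. For the lower bound, fix $L$ with $\lambda_p(2L)>0$; for all large $T$ one has $(-L,L)\subset(g(t),h(t))$ for $t\ge T$ and $\min_{[-L,L]}u(T,\cdot)>0$, so by the comparison observation $u$ dominates on $[-L,L]$ the solution of the nonlocal Dirichlet logistic problem on $(-L,L)$ started at $t=T$ from the constant $\min_{[-L,L]}u(T,\cdot)$, giving $\liminf_{t\to\infty}u(t,x)\ge V_{(-L,L)}(x)$ on $(-L,L)$. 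Letting $L\to\infty$ and using that $V_{(-L,L)}\to v_0$ locally uniformly --- the only bounded positive solution of $d(J*v-v)+f(v)=0$ on $\mathbb R$ is $v_0$, by a Liouville-type argument resting on the strict monotonicity of $f(u)/u$ --- we get $\liminf_{t\to\infty}u(t,x)\ge v_0$ locally uniformly, which with the upper bound gives $u(t,x)\to v_0$ locally uniformly in $\mathbb R$. This is alternative (i).

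\emph{The main obstacle.} The dichotomy itself is short once the fixed-domain toolkit is in place, and building that toolkit is the real work: the existence, monotonicity and limit value $f'(0)$ of the principal eigenvalue $\lambda_p(\ell)$ of the nonlocal Dirichlet operator; the well-posedness and logistic long-time dynamics of the nonlocal Dirichlet problem (uniqueness and global attractivity of the positive steady state when $\lambda_p>0$, decay to $0$ when $\lambda_p\le0$, and convergence of the positive steady states to $v_0$ as the interval grows to $\mathbb R$); and a comparison principle strong enough to compare a free-boundary sub/supersolution, whose support moves in time, with fixed-domain solutions. Given these, the one genuinely new ingredient is the ``flux stays positive'' step in the lemma, and this is precisely where $J(0)>0$ from {\bf (J)} is indispensable: it guarantees $J>0$ near $0$, so that a strictly positive lower bound for $u$ just inside a supposed finite limit of a free boundary forces the corresponding outward flux --- hence the speed of that boundary --- to stay bounded away from $0$, ruling out convergence of the boundary.
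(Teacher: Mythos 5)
Your proposal is correct and follows essentially the same route as the paper: the paper's Theorems 3.5--3.7 and Lemma 3.6 carry out exactly your plan (fixed-subinterval comparison with the nonlocal Dirichlet logistic problem, principal-eigenvalue properties, the ``positive lower bound near a supposedly finite boundary forces the flux and hence the boundary speed to stay positive'' contradiction, the ODE supersolution for $\limsup u\le v_0$, and expanding fixed intervals with steady states converging to $v_0$ for the lower bound). The only cosmetic differences are that you merge the paper's Theorem 3.5 (eigenvalue part) and Lemma 3.6 into a single key lemma, and you invoke a Liouville-type statement where the paper proves the convergence $u_{(\ell_1,\ell_2)}\to v_0$ by a translation-invariance argument (its Proposition 3.7), which is part of the same fixed-domain toolkit you assume.
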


\begin{theorem}[Spreading-vanishing criteria]\label{thm3}
Under the conditions of Theorem \ref{thm2}, if $f'(0)\geq d$, then spreading always happens. If $f'(0)\in (0, d)$, then there exists a unique $\ell^*>0$ such that
spreading always happens if $h_0\geq \ell^*/2$; and for $h_0\in (0, \ell^*/2)$, there exists a unique $\mu^*>0$ so that spreading happens exactly when $\mu>\mu^*$.
\end{theorem}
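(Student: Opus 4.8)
The plan is to reduce the whole dichotomy to a threshold on the length of the habitat interval, measured by a principal eigenvalue. I would first record (from the linear theory of the nonlocal operator) the principal eigenvalue $\lambda_p(\ell)$ of $\phi\mapsto d\int_{-\ell/2}^{\ell/2}J(x-y)\phi(y)\,dy-d\phi+f'(0)\phi$ on $(-\ell/2,\ell/2)$: it admits a positive eigenfunction, is continuous and strictly increasing in $\ell$, and satisfies $\lim_{\ell\to0^+}\lambda_p(\ell)=f'(0)-d$ and $\lim_{\ell\to\infty}\lambda_p(\ell)=f'(0)$. Consequently, if $f'(0)\ge d$ then $\lambda_p(\ell)>f'(0)-d\ge 0$ for every $\ell>0$; if $f'(0)\in(0,d)$ there is a unique $\ell^*>0$ with $\lambda_p(\ell^*)=0$, and $\lambda_p(\ell)>0\iff\ell>\ell^*$. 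This $\ell^*$ is the constant in the statement (with the convention $\ell^*=0$ when $f'(0)\ge d$).

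The heart of the matter is the claim: \emph{if $h(T)-g(T)>\ell^*$ for some $T\ge0$, then spreading occurs.} Suppose not; then vanishing occurs and $u(t,\cdot)\to0$ uniformly. Fix a closed interval $I=[a,b]\subset(g(T),h(T))$ with $b-a>\ell^*$; since $g$ is decreasing and $h$ increasing, $I\subset(g(t),h(t))$ for all $t\ge T$. By \textbf{(f4)}, for small $\varepsilon>0$ there is $\delta_0>0$ with $f(s)\ge(f'(0)-\varepsilon)s$ for $0\le s\le\delta_0$; choosing $t_1\ge T$ so large that $\|u(t,\cdot)\|_\infty\le\delta_0$ for $t\ge t_1$, and using $u\ge0$ together with $\int_a^b\le\int_{g(t)}^{h(t)}$, one checks that on $I$ the function $u$ is a supersolution of $v_t=d\int_a^bJ(x-y)v(t,y)\,dy-dv+(f'(0)-\varepsilon)v$. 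Taking $\varepsilon$ small enough that this operator on $I$ still has positive principal eigenvalue $\sigma$ (it equals $\lambda_p(b-a)>0$ at $\varepsilon=0$ and depends continuously on $\varepsilon$), with positive eigenfunction $\varphi$, and picking $\delta>0$ so small that $u(t_1,\cdot)\ge\delta\varphi$ on $\bar I$ (possible since $u(t_1,\cdot)>0$ on $\bar I$), the comparison principle for this nonlocal linear parabolic problem gives $u(t,x)\ge\delta e^{\sigma(t-t_1)}\varphi(x)$ for $t\ge t_1$, contradicting $u(t,\cdot)\to0$. Since $J(0)>0$ makes the outward fluxes strictly positive for $t>0$, we have $h(t)-g(t)>2h_0$ for all $t>0$, so the claim already yields spreading whenever $f'(0)\ge d$, and whenever $f'(0)\in(0,d)$ and $h_0\ge\ell^*/2$.

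It remains to treat $f'(0)\in(0,d)$ with $h_0\in(0,\ell^*/2)$. A free-boundary comparison argument shows the solution is monotone in $\mu$: for $\mu_1<\mu_2$ one has $(g_{\mu_1}(t),h_{\mu_1}(t))\subset(g_{\mu_2}(t),h_{\mu_2}(t))$ and $u_{\mu_1}\le u_{\mu_2}$ on the smaller interval. Together with continuous dependence of $(u,g,h)$ on $\mu$ over finite time intervals and the claim above (which detects spreading at a finite time), this gives that $\Sigma:=\{\mu>0:\ \text{spreading occurs}\}$ is an open half-line $(\mu^*,\infty)$, $\mu^*\in[0,\infty]$. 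To see $\mu^*>0$ I would exhibit a vanishing upper solution for small $\mu$: fix $\ell_1\in(2h_0,\ell^*)$ (so $\lambda_p(\ell_1)<0$), let $V>0$ be the principal eigenfunction on $(-\ell_1/2,\ell_1/2)$, and set $\bar u(t,x)=Me^{-\rho t}V(x)$ on $(-\bar h(t),\bar h(t))$ with $\bar h(t)=h_0+\tfrac{\mu M C}{\rho}(1-e^{-\rho t})$, where $M$ is large enough that $\bar u(0,\cdot)\ge u_0$, $0<\rho\le|\lambda_p(\ell_1)|$, and $C$ bounds the relevant flux constant $\tfrac12\|V\|_{L^1}$. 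Using $f(s)\le f'(0)s$ (from \textbf{(f3)}) and $\bar h(t)\le\bar h(\infty)\le\ell_1/2$, one verifies $(\bar u,-\bar h,\bar h)$ is an upper solution of \eqref{101} provided $h_0+\mu M C/\rho\le\ell_1/2$, which holds for $\mu$ small; then $h(t)\le\bar h(t)<\ell^*/2$ and $0\le u\le Me^{-\rho t}V\to0$, i.e. vanishing.

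The remaining---and, I expect, hardest---point is $\mu^*<\infty$, i.e. spreading for all large $\mu$. Here I would build, for $\mu$ large, a lower solution of \eqref{101} on a finite interval $[0,t_\mu]$ whose right free boundary has moved past $h_0+\ell^*$ by time $t_\mu$, exploiting that the initial front speed $h'(0^+)=\mu\int_{-h_0}^{h_0}\int_{h_0}^{\infty}J(x-y)u_0(x)\,dy\,dx$ is of order $\mu$ while, for a short $\mu$-independent time, $u$ stays bounded below on a fixed subinterval (since $\|u_t\|_\infty$ is bounded uniformly in $\mu$ there); once $h(t_\mu)-g(t_\mu)>\ell^*$, the claim forces spreading. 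The delicacy is that the natural shape function---the principal eigenfunction---does not vanish at the free boundary, so the lower solution must be constructed with care (truncating near the endpoints). An alternative route is to pass to the limit $\mu\to\infty$ in \eqref{101}, showing $(g_\mu(t),h_\mu(t))\to\mathbb R$ and $u_\mu\to U$ locally uniformly with $U$ the solution of the nonlocal Cauchy problem \eqref{113}, which spreads under \textbf{(f1)--(f4)}; this again forces $h_\mu(T)-g_\mu(T)>\ell^*$ for a fixed $T$ and all large $\mu$. Uniqueness of $\ell^*$ and of $\mu^*$ is then immediate from the strict monotonicity of $\lambda_p(\cdot)$ and the monotonicity in $\mu$.
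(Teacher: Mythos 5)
Most of your outline tracks the paper's structure and is sound: the eigenvalue threshold $\ell^*$ from the monotonicity, continuity and limits of $\lambda_p$ (the paper's Proposition 3.5); the finite-time criterion ``$h(T)-g(T)>\ell^*$ implies spreading'' (you prove it by a linearized subsolution $\delta e^{\sigma(t-t_1)}\varphi$ on a fixed interval, while the paper gets the same conclusion from Theorem 3.9, i.e.\ vanishing forces $\lambda_p(\mathcal{L}_{(g_\infty,h_\infty)}+f'(0))\le 0$ — both routes work); the small-$\mu$ vanishing upper solution $\left(Me^{-\rho t}V, -\bar h,\bar h\right)$, which is essentially the paper's construction in Theorem 3.13; and the sharpness of $\mu^*$ via monotonicity in $\mu$ plus continuous dependence at a finite time, which is the paper's Theorem 3.15.

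The genuine gap is the step $\mu^*<\infty$ (spreading for all large $\mu$): you explicitly leave it as a sketch, and neither of your two proposed routes is carried out or would go through as described. In the first route, the observation that $h'(0^+)$ is of order $\mu$ and that $u$ stays bounded below on a \emph{fixed} subinterval of $(-h_0,h_0)$ for a short $\mu$-independent time does not yield a lower bound $h'(t)\gtrsim\mu$ up to the time the range has length $\ell^*$: condition \textbf{(J)} allows $J$ to be compactly supported, so once $h(t)$ has moved beyond the support radius of $J$ from that fixed subinterval, the flux it contributes is zero; what is needed is mass of $u$ within distance $\epsilon_0$ of the \emph{moving} front, which your construction does not provide (this is exactly the delicacy you flag but do not resolve). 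The second route (pass to the limit $\mu\to\infty$ and compare with the Cauchy problem \eqref{113}) invokes a nonlocal analogue of the result in \cite{DYH2012} together with hair-trigger spreading for \eqref{113}, neither of which is established in this paper, and proving that $(g_\mu(T),h_\mu(T))$ covers a long interval for large $\mu$ is essentially the assertion to be shown. The paper closes this gap by a different and quite short argument (Theorem 3.14): assume vanishing for every $\mu$, so $h_{\mu,\infty}-g_{\mu,\infty}\le\ell^*$ by Theorem 3.9; fix $\mu_1$ and $t_1$ with $h_{\mu}(t)>H_\infty-\epsilon_0/4$ for $\mu\ge\mu_1$, $t\ge t_1$, integrate $h_\mu'=\mu\,J_h$ over $(t_1,\infty)$, and bound the flux from below for $\mu\ge\mu_1$ by the $\mu$-independent quantity built from $u_{\mu_1}$ near its own front and $J\ge\delta_0$ on $[-\epsilon_0,\epsilon_0]$ (using the monotonicity in $\mu$ of $u_\mu,h_\mu,-g_\mu$); this gives $\mu\le C(\mu_1)\ell^*<\infty$ for all $\mu\ge\mu_1$, a contradiction. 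You would need either to adopt this argument or to genuinely complete one of your sketches before the proof of Theorem \ref{thm3} is done.
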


As we will see in Section 3.2, $\ell^*$ depends only on $f'(0), \; d$ and the kernel function $J$. On the other hand, $\mu^*$ depends also on $u_0$.

\begin{remark}
We note that for the corresponding local diffusion model in \cite{DYH2010}, no matter how small is the diffusion coefficient $d$ in $d u_{xx}$  relative to $f'(0)$, vanishing can always happen if $h_0$ and $\mu$ are both sufficiently small. However, for \eqref{101}, Theorem \ref{thm3} indicates that
when $d\leq f'(0)$, spreading always happens no mater how small $h_0$, $\mu$ and $u_0$ are. This is one of the most striking differences between
the local and nonlocal diffusion models discovered in this paper.
\end{remark}

\begin{remark}
In a forthcoming paper, we will study the spreading speed of \eqref{101} when spreading happens.
\end{remark}

\begin{remark}
After this paper is completed, we learned of the preprint \cite{CQW2018}, where \eqref{101} with $f\equiv 0$
is studied, and some high space dimension cases are also considered. Interesting asymptotic behavior of the solution 
to \eqref{101} with $f\equiv 0$ is established in \cite{CQW2018}, which naturally is very different from our case here where a Fisher-KPP growth term $f$ is used in \eqref{101}.
\end{remark}

\bigskip

The rest of the paper is organized as follows. In
Section 2 we prove that (\ref{101}) has a unique solution defined for all $t>0$.
Although this is based on the contraction mapping theorem, our approach is very different from those used in similar free boundary problems with
``local diffusion''. For example, we introduce and use  a parameterized ODE problem in the proof of Lemma 2.3, which is a key step
for our first contraction mapping argument to work; in the proof of Theorem 2.2, we make use of the fact that $h'(t)$ and $-g'(t)$ have a positive lower bound for small $t>0$, which is crucial for our proof of the second contraction mapping result.
In Section 3, we study the long-time dynamical behavior of the solution to \eqref{101} under the conditions {\bf (J)} and {\bf (f1)-(f4)}, where Theorems \ref{thm2} and \ref{thm3} will be proved. Here we make use of various comparison principles, and also properties of principal eigenvalues
of the associated nonlocal linear operators, and initial value problems over fixed spatial domains.

\section{Global existence and uniqueness for  (\ref{101})}
\noindent

In this section we prove the global existence and
uniqueness of the solution to problem (\ref{101}).
For convenience, we first introduce some notations. For given $h_0, T>0$ we define
\begin{align*}
&\mathbb H_{h_0, T}:=\Big\{h\in C([0,T])~:~h(0)=h_0,
\; \inf_{0\le t_1<t_2\le T}\frac{h(t_2)-h(t_1)}{t_2-t_1}>0\Big\},\\
&\mathbb G_{h_0, T}:=\Big\{g\in C([0,T])~:-g\in\mathbb{H}_{h_0, T}\Big\},\\
&C_0([-h_0, h_0]):=\Big\{u\in C([-h_0, h_0]): u(-h_0)=u(h_0)=0\Big\}.
\end{align*}
For $g\in \mathbb G_{h_0, T}$, $h\in\mathbb H_{h_0, T}$ and $u_0\in C_0([-h_0, h_0])$ nonnegative, we define
\begin{align*}
&\Omega=\Omega_{g, h}:=\left\{(t,x)\in\mathbb{R}^2: 0<t\leq T,~g(t)<x<h(t)\right\},\\
&\mathbb{X}=\mathbb X_{u_0,g,h}:=\Big\{\phi\in C(\overline\Omega_{g,h})~:~\phi\ge0~\text{in}
~\Omega_{g,h},~\phi(0,x)=u_0(x)~\text{for}~x\in [-h_0,h_0]~\\
& \hspace{6cm} \text{and} \;\; \phi(t,g(t))=
\phi(t,h(t))=0~\text{for }0\le t\le T\Big\}.
\end{align*}

The following theorem is the main result of this section.

\begin{theorem}
Suppose that {\rm \bf (J)} and {\rm \bf (f1)-(f2)} hold. Then for any given $h_0>0$ and $u_0(x)$
satisfying \eqref{102}, problem \eqref{101} admits a unique
 solution $(u(t,x), g(t), h(t))$ defined for all $t>0$. Moreover, for any $T>0$, $g\in \mathbb G_{h_0, T},\;
 h\in\mathbb H_{h_0, T}$ and $u\in \mathbb{X}_{u_0, g, h}$.
\label{Thm22}
\end{theorem}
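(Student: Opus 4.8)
The plan is to prove existence and uniqueness on a small time interval $[0,T_0]$ via two successive contraction mapping arguments, and then extend the solution to all $t>0$ by a continuation argument based on a priori bounds. First I would fix $g\in\mathbb G_{h_0,T}$ and $h\in\mathbb H_{h_0,T}$ and treat the first equation of \eqref{101} as an ODE in $t$ for each fixed $x$: writing the right-hand side as $d\int_{g(t)}^{h(t)}J(x-y)u(t,y)\,dy - du(t,x) + f(t,x,u)$, one sees that for $u$ ranging in a bounded set the map $u\mapsto \big(\text{RHS}\big)$ is Lipschitz in $u$ (using \textbf{(J)} for the integral term and \textbf{(f1)} for $f$). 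Via Duhamel's formula $u(t,x) = u_0(x)e^{-dt} + \int_0^t e^{-d(t-s)}\big[d\int_{g(s)}^{h(s)}J(x-y)u(s,y)\,dy + f(s,x,u(s,x))\big]\,ds$ (with $u\equiv 0$ outside $[g(t),h(t)]$), I would set up a contraction on a closed ball in $\mathbb X_{u_0,g,h}$ equipped with the sup norm, shrinking $T$ if necessary; the bound $0\le u\le \max\{\|u_0\|_\infty, K_0\}$ from \textbf{(f2)} together with a comparison/maximum-principle argument keeps $u$ nonnegative and bounded independent of $T$. This yields, for each admissible pair $(g,h)$, a unique $u = u_{g,h}\in\mathbb X_{u_0,g,h}$; as the excerpt hints, a parameterized ODE formulation (Lemma 2.3) is the natural device to make this step rigorous when the spatial domain itself moves with $t$.

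Next I would define the second map: given $(g,h)$, take $u=u_{g,h}$ and define new boundary functions $(\tilde g,\tilde h)$ by integrating the flux conditions,
\[
\tilde h(t) = h_0 + \mu\int_0^t\!\!\int_{g(s)}^{h(s)}\!\!\int_{h(s)}^{\infty} J(x-y)u(s,x)\,dy\,dx\,ds,
\]
and symmetrically for $\tilde g(t)$. One checks that $\tilde h\in\mathbb H_{h_0,T}$ and $\tilde g\in\mathbb G_{h_0,T}$: monotonicity and the strict positivity of the difference quotient follow because, for small $t>0$, $u_{g,h}(t,x)$ is bounded below on compact subsets of the interior (it is close to $u_0(t,x)e^{-dt}$ which is positive in $(-h_0,h_0)$), so the double integral defining $\tilde h'(t)$ has a positive lower bound for $t$ small — exactly the fact the introduction flags as crucial. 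Then I would show the composite map $(g,h)\mapsto(\tilde g,\tilde h)$ is a contraction on a suitable complete metric space of boundary functions, on a possibly smaller interval $[0,T_0]$: the key estimate is that $\|u_{g_1,h_1}-u_{g_2,h_2}\|_\infty \le C\big(\|g_1-g_2\|_\infty + \|h_1-h_2\|_\infty\big)$, obtained by subtracting the Duhamel representations and using Gronwall, plus a direct estimate showing the flux integrals depend Lipschitz-continuously on $(g,h)$ and on $u$. The fixed point $(g,h)$ with its associated $u$ is then the unique local solution.

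Finally, for global existence I would argue by extension: suppose the solution exists on a maximal interval $[0,T_{\max})$ with $T_{\max}<\infty$. The bound $0\le u\le M:=\max\{\|u_0\|_\infty,K_0\}$ holds on all of $[0,T_{\max})$ by the comparison argument, and from the flux conditions $0\le h'(t)\le \mu M\,(h(t)-g(t))$ and similarly for $-g'$, so $h(t)-g(t)$ grows at most exponentially and $g(t),h(t)$ remain finite as $t\uparrow T_{\max}$; moreover $g,h$ are Lipschitz on $[0,T_{\max})$. Hence $(u(T_{\max}^-,\cdot), g(T_{\max}^-), h(T_{\max}^-)$) furnishes valid initial data, and the local existence result can be applied again to extend past $T_{\max}$, contradicting maximality. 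Therefore the solution is defined for all $t>0$, and by construction $g\in\mathbb G_{h_0,T}$, $h\in\mathbb H_{h_0,T}$, $u\in\mathbb X_{u_0,g,h}$ for every $T>0$.

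I expect the main obstacle to be the second contraction step, specifically establishing the Lipschitz dependence $\|u_{g_1,h_1}-u_{g_2,h_2}\|_\infty\le C(\|g_1-g_2\|_\infty+\|h_1-h_2\|_\infty)$ with a constant $C$ that can be controlled as $T_0\to 0$: because the spatial domains $(g_i(t),h_i(t))$ differ, comparing $u_{g_1,h_1}$ and $u_{g_2,h_2}$ requires care near the moving boundaries where both functions vanish, and one must handle the mismatch in the domains of the nonlocal integral term. Extending the solutions by zero outside their respective intervals and carefully splitting the integral $\int_{g_1}^{h_1}-\int_{g_2}^{h_2}$ over the symmetric difference of the two intervals — whose measure is $O(\|g_1-g_2\|_\infty+\|h_1-h_2\|_\infty)$ — together with the uniform bound on $u$ and the boundedness of $J$ from \textbf{(J)}, should close this estimate, but it is the technically delicate heart of the argument.
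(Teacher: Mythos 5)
Your overall architecture is the same as the paper's: solve the fixed-domain problem \eqref{201} for given $(g,h)$ by a contraction built on the parametrized ODE (with initial time $t_x$, not $0$ --- note that your Duhamel formula as written, with the time integral starting at $0$, is inconsistent with the boundary condition $u(t,h(t))=0$ at points $x\notin[-h_0,h_0]$; you correctly defer to the parametrized ODE device for this), then run a second contraction for the boundary map \eqref{2003}, and finally continue the local solution using the exponential bound on $h-g$. The genuine gap is in the second contraction, precisely at the point you call the ``technically delicate heart''. Splitting the nonlocal integral over the symmetric difference of the two intervals only controls the mismatch in the integral term; it does not control $|u_{g_1,h_1}-u_{g_2,h_2}|$ at a point $x^*$ that the two right fronts reach at different times $t_1^*\neq t_2^*$, nor at a point lying in one domain but not the other. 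There $u_{g_2,h_2}(t,x^*)$ can be positive while $u_{g_1,h_1}(t,x^*)=0$, and the only way to bound it by $C\|h_1-h_2\|_{C([0,s])}$ is to bound the elapsed times $t_2^*-t_1^*$ and $t-t_2^*$ by the boundary discrepancy, which requires a \emph{uniform} positive lower bound on the front speeds, e.g.\ $t_2^*-t_1^*\le |h_1(t_2^*)-h_2(t_2^*)|(\mu c_0)^{-1}$. This is why the paper does not work on all of $\mathbb G_s\times\mathbb H_s$ but on the invariant subset $\Sigma_s$ where $h'\ge \mu c_0$, $-g'\le-\mu\tilde c_0$ and $h-g\le 2h_0+\frac{\epsilon_0}{4}$ (see \eqref{tilde-h'}, \eqref{tilde-g'} and the Case 2--3 estimates \eqref{2009}, \eqref{20011}). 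You do invoke the positive lower bound on $\tilde h'$, but only to justify $\tilde h\in\mathbb H_{h_0,T}$, which already follows from $V_{g,h}>0$; membership in $\mathbb H_{h_0,T}$ carries no uniform constant across the space, so without building $\mu c_0$ into the metric space your Gronwall/Duhamel subtraction does not close.

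A second, related omission concerns uniqueness: a contraction yields a unique fixed point only within the restricted class, so you must additionally show that \emph{every} solution $(u,g,h)$ of \eqref{101} has $(g,h)\in\Sigma_{T^*}$ (and, in the first contraction, that every solution of \eqref{201} obeys the bound \eqref{v-upper-bd} defining the ball $\mathbb X_s^M$); the paper does exactly this in Step 3 of the proof of Theorem \ref{Thm22} and inside Lemma \ref{Lemma202}. Your continuation argument is fine in outline, but to restart from $t=\hat T$ one still has to verify that $u$ extends continuously to $\overline\Omega_{\hat T}$, including the corner points $(\hat T,g(\hat T))$ and $(\hat T,h(\hat T))$, and remains positive inside, so that $u(\hat T,\cdot)$ is admissible initial data; this is carried out in Step 4 of the paper's proof.
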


The rest of this section is devoted to the proof of Theorem \ref{Thm22}.
The following Maximum Principle will be used frequently in our analysis to follow.
{\begin{lemma}[Maximum Principle]\label{lemma-MP}
Assume that {\rm \bf (J)}   holds, and $g\in \mathbb G_{h_0, T},\; h\in\mathbb H_{h_0, T}$ for some $h_0, T>0$.
Suppose that
 $u(t,x)$ as well as $u_t(t,x)$ are
continuous in $\overline\Omega_{g,h}$ and satisfies, for some $c\in L^\infty (\Omega_{g,h})$,
\begin{equation}\label{lemma-MP-u}
\left\{
\begin{aligned}
&u_t(t,x)\ge d\int_{g(t)}^{h(t)}J(x-y)u(t,y)dy-du +c(t,x)u, && t\in (0, T],\  x\in (g(t), h(t)),\\
& u(t, g(t)) \geq 0,\ u(t, h(t)) \geq 0, && t>0,\\
&u(0,x)\ge0,  && x\in [-h_0,  h_0].
\end{aligned}
\right.
\end{equation}
Then $u(t,x)\ge0$ for all $0\le t\le T$ and $x\in[g(t),h(t)]$.
Moreover, if $u(0,x)\not\equiv0$ in $[-h_0, h_0]$, then $u(t,x)>0$ in $\Omega_T$.
\end{lemma}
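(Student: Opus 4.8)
The plan is to prove this maximum principle by a contradiction argument combined with an exponential change of variables, which is the standard route for nonlocal parabolic-type operators but requires care because there is no regularizing diffusion. First I would reduce to the case where the lower-order coefficient is favorable: set $w(t,x) = e^{-\lambda t}u(t,x)$ for a constant $\lambda$ to be chosen large (specifically $\lambda > \|c\|_{L^\infty} + d$, or simply $\lambda > \|c\|_{L^\infty}$ suffices after noting the $-du$ term), so that $w$ satisfies the same type of inequality with $c(t,x)$ replaced by $c(t,x) - \lambda$, which can be taken strictly negative, and in particular $-d + c(t,x) - \lambda < 0$ on $\overline\Omega_{g,h}$. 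Since $w$ has the same sign as $u$, it suffices to prove $w \geq 0$.

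Next I would run the contradiction argument. Suppose $w$ becomes negative somewhere in $\overline\Omega_{g,h}$. Because $\overline\Omega_{g,h}$ is compact (here $g,h$ are continuous on $[0,T]$) and $w$ is continuous, $w$ attains a negative minimum $-m < 0$ at some point $(t_0, x_0)$; choose $t_0$ to be the smallest time at which the value $-m$ is attained. The boundary conditions $w(t,g(t)), w(t,h(t)) \geq 0$ and the initial condition $w(0,\cdot) \geq 0$ force $t_0 > 0$ and $g(t_0) < x_0 < h(t_0)$, so $(t_0,x_0)$ is an interior-in-space point at positive time. At this point, since $t_0$ is the first such time and $w_t$ is continuous, one has $w_t(t_0,x_0) \leq 0$. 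Now evaluate the differential inequality at $(t_0,x_0)$: the nonlocal term satisfies
\[
d\int_{g(t_0)}^{h(t_0)} J(x_0-y) w(t_0,y)\,dy \geq d(-m)\int_{g(t_0)}^{h(t_0)} J(x_0-y)\,dy \geq -dm,
\]
using $w(t_0,y) \geq -m$ and $\int J \leq 1$. Combining with the $(-d + c - \lambda)w$ term evaluated at the minimum — where $w(t_0,x_0) = -m$ and the coefficient is strictly negative, so $(-d+c-\lambda)(-m) > 0$ — we would obtain a strict inequality $w_t(t_0,x_0) > 0$ (or at least $> $ something nonnegative), contradicting $w_t(t_0,x_0) \leq 0$. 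This contradiction yields $w \geq 0$, hence $u \geq 0$.

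For the strong maximum principle statement (if $u(0,\cdot) \not\equiv 0$ then $u > 0$ in $\Omega_T$ — I read $\Omega_T$ as $\Omega_{g,h}$), I would argue as follows. Fix any $(t_1,x_1) \in \Omega_{g,h}$ and suppose $u(t_1,x_1) = 0$. Using $u \geq 0$ already established, at a zero of $u$ in the interior the differential inequality gives $u_t(t_1,x_1) \geq d\int_{g(t_1)}^{h(t_1)} J(x_1-y)u(t_1,y)\,dy \geq 0$, and more importantly one can propagate positivity: I would first show that if $u(t_0,\cdot) \not\equiv 0$ on $[g(t_0),h(t_0)]$ for some $t_0$, then $u(t,\cdot) \not\equiv 0$ for all $t \in [t_0,T]$ (a mass-type argument: $\frac{d}{dt}\int u \geq -C\int u$ type estimate, or simply that the set of times where $u(t,\cdot)\equiv 0$ is relatively closed and its complement contains $t_0$ and is forward-invariant). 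Then, working at a fixed time $t$ where $u(t,\cdot)\not\equiv 0$, I would show the zero set of $u(t,\cdot)$ in the open interval $(g(t),h(t))$ is empty: if $u(t,x_1)=0$ at an interior point where $w_t \leq 0$ would fail... more cleanly, I would integrate the inequality in time. Define $\phi(t) = u(t,x_1)e^{-\lambda t}$; from the inequality, $\phi'(t) \geq d e^{-\lambda t}\int J(x_1-y)u(t,y)\,dy \geq 0$ once we know $-d+c-\lambda<0$ is handled, so $\phi$ is nondecreasing, meaning once $u(\cdot,x_1)$ becomes positive it stays positive; and at points/times where the nonlocal integral is strictly positive (which holds since $J(0)>0$ and $J$ continuous makes $J>0$ near $0$, so as long as $u(t,\cdot)>0$ somewhere within distance $r$ of $x_1$ where $J>r$), $\phi$ is strictly increasing, forcing $u(t,x_1)>0$ for $t$ slightly larger. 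A covering/continuation argument in $x$ then spreads positivity across all of $(g(t),h(t))$ for every $t>0$.

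The main obstacle I anticipate is the strong maximum principle part: unlike the local case where the Hopf lemma and Harnack inequality do the work instantly, here I must carefully exploit the structure of $J$ — that $J(0)>0$ and continuity give a neighborhood of $0$ on which $J$ is bounded below — to propagate positivity spatially, and combine this with the time-monotonicity of $\phi(t)=e^{-\lambda t}u(t,x_1)$ to propagate it forward in time, handling the possibility that $u(t,\cdot)$ vanishes on a subinterval at some times. Packaging these two propagation mechanisms (spatial spreading via the kernel support near $0$, temporal spreading via the ODE inequality along each vertical line) into a clean argument is the delicate step; the first assertion ($u \geq 0$) by contrast is a routine first-touching-point argument after the exponential rescaling.
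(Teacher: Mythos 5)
Your proof of the first assertion ($u\ge 0$) is correct and complete: after the substitution $w=e^{-\lambda t}u$ with $\lambda>\|c\|_{L^\infty}$, the evaluation of the inequality at a negative minimum point $(t_0,x_0)$, together with $w_t(t_0,x_0)\le 0$ (which does hold: since $h$ increases and $g$ decreases, the vertical segment just below $(t_0,x_0)$ lies in $\overline\Omega_{g,h}$, and global minimality makes the left difference quotients nonpositive), yields $w_t(t_0,x_0)\ge -dm+(d+\lambda-c(t_0,x_0))m\ge(\lambda-\|c\|_{L^\infty})m>0$, a contradiction. This is a legitimate alternative to the paper's argument, which instead uses the weight $e^{+kt}$, integrates the inequality in time from the entry time of the vertical line to the minimum time, and closes the argument on time intervals of length at most $\tfrac{1}{2(d+p_0)}$, iterating finitely many times; your pointwise version avoids that iteration.

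The strict-positivity part, however, has a genuine gap. The key monotonicity claim is false as written: with $\phi(t)=e^{-\lambda t}u(t,x_1)$ and $-d+c-\lambda<0$, the differential inequality gives $\phi'(t)\ge d e^{-\lambda t}\int J(x_1-y)u(t,y)\,dy+(c-d-\lambda)e^{-\lambda t}u(t,x_1)$, and the last term is $\le 0$ because $u\ge 0$, so it cannot be discarded; the asserted bound $\phi'\ge d e^{-\lambda t}\int Ju\ge 0$ does not follow. To make vertical-line monotonicity work you need the opposite exponential weight, $v=e^{kt}u$ with $k\ge d+\|c\|_{L^\infty}$, so that the zeroth-order coefficient is nonnegative and can be dropped using $u\ge0$ --- exactly the weight the paper uses. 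Beyond this sign error, the rest of your plan (forward propagation of non-triviality in $t$, then a covering argument in $x$ using $J>0$ near $0$, with care at points outside $[-h_0,h_0]$ where the vertical line enters the domain at a time $t_x>0$ with only $u(t_x,x)\ge0$ known) is plausible but is left unexecuted, and you yourself flag it as the delicate step; as it stands the second assertion is not proved. For comparison, the paper's route is shorter and avoids any covering: assuming $u(t^*,x^*)=0$ at an interior point, it first shows the whole slice $u(t^*,\cdot)$ vanishes on $(g(t^*),h(t^*))$ --- at a boundary point $\tilde x$ of the positivity set one has $w_t(t^*,\tilde x)\le 0$ while the nonlocal term is strictly positive, a contradiction --- and then integrates the inequality in $t$ from $0$ to $t^*$ for $x\in[-h_0,h_0]$ to force $u(0,\cdot)\equiv 0$, contradicting the hypothesis.
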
}

\begin{proof} To simplify notations, for $s\in (0, T]$ we write
\[
\Omega_s:=\big\{(t,x)\in \Omega_{g,h}: \; t\leq s\big\}.
\]
Let $w(t,x)=e^{kt}u(t,x)$, where $k>0$ is a constant chosen large enough such that
\[
\mbox{$p(t,x):=k-d+ c(t,x )>0$ for all $(t,x)\in\Omega_T$.}
\]
 Then
\bes\label{pf-lm-eqn-w}
w_t&\ge& d\int_{g(t)}^{h(t)}J(x-y)w(t,y)dy+\left[k-d+ c(t,x )\right]w(t,x).
\ees

Denote  $p_0=\sup_{(t,x)\in\Omega_T}p(t,x)$ and $T_*=\min
\left\{T,~\frac{1}{2(d+p_0)}\right\}$. We are now in a position
to prove that $w\ge0$ in $\Omega_{T_*}$.
Suppose that
$$
w_{\inf}:=\inf_{(t,x)\in\Omega_{T_*}}w(t,x)<0.
$$
By (\ref{lemma-MP-u}), $w\geq 0$ on the parabolic boundary of $\Omega_{T}$, and hence there exists $(t_*, x_*)\in \Omega_{T_*}$ such that
$w_{\inf} = w (t_*, x_*)<0.$
We now define
\begin{equation*}
t_0 =t_0(x_*):=\left\{
\begin{aligned}
&t_g& & \mbox{ if $x_*\in(g(t_*),-h_0)$ and $g(t_g)=x_*$},\\
&0& &\mbox{ if $x_*\in[-h_0,h_0]$},\\
&t_h& &\mbox{ if $x_*\in(h_0,h(t_*))$ and $h(t_h)=x_*$}.
\end{aligned}
\right.
\end{equation*}
Clearly
\begin{equation*}
u(t_0,x_*)=\left\{
\begin{aligned}
&u(t_0, g(t_0))=0  & &\mbox{ if } x_*\in(g(t_*),-h_0),\\
&u_0(x_*)\geq 0  & & \mbox{ if }  x_*\in[-h_0,h_0],\\
&u(t_0, h(t_0))=0  & & \mbox{ if } x_*\in(h_0,h(t_*)).
\end{aligned}
\right.
\end{equation*}
Integrating (\ref{pf-lm-eqn-w}) from $t_0$ to $t_*$ we obtain
\begin{eqnarray*}
w(t_*,x_*)-w(t_0,x_*)  &\ge&  d\int_{t_0}^{t_*}\int_{g(t)}^{h(t)}
J(x_*-y)w(t,y)dydt+\int_{t_0}^{t_*}p(t,x_*)w(t,x_*)dt\\
&\ge& d\int_{t_0}^{t_*}\int_{g(t)}^{h(t)}J(x_*-y)w_{\inf}dydt
+\int_{t_0}^{t_*}p(t,x_*)w_{\inf}dt\\
&\ge& d\int_{t_0}^{t_*}\int_{\R}J(x_*-y)w_{\inf}dydt
+\int_{t_0}^{t_*}p_0w_{\inf}dt\\
 &= & \left(t_*-t_0 \right)(d+p_0)w_{\inf}.
\end{eqnarray*}
Since $w(t_0,x_*)=e^{kt_0}u(t_0,x_*)\ge0$, we deduce
$$
w_{\inf}\ge (t_*-t_0)(d+p_0)w_{\inf}\ge T_*(d+p_0)w_{\inf}\geq {1\over 2}w_{\inf},
$$
which is a contradiction to the assumption that $w_{\inf}<0$. Hence it follows that $w(t,x)\ge0$ and thus $u(t,x)\ge0$
for all $(t,x)\in\Omega_{T_*}$.

If $T_*=T$, then $u(t,x)\ge0$ in $\Omega_T$ follows directly; while if $T_*<T$, we may repeat this process with $u_0(x)$ replaced by $u(T_*, x)$ and
$(0, T]$ replaced by $(T_*, T]$. Clearly after repeating this process finitely many times, we will obtain $u(t,x)\ge0$ for all $(t,x)\in\Omega_T$.

Now assume that $u(0,x)\not\equiv0$ in $[-h_0, h_0]$. To complete the proof, it suffices to show that $w>0$ in $\Omega_T$.  Suppose that  there is a point $(t^*,x^*)\in
\Omega_T$ such that $w(t^*,x^*)=0$.

First, we claim that
\[
\mbox{$w(t^*, x)=0$ for $x\in (g(t^*), h(t^*))$.}
\]
 Otherwise,  there exists
$$
\tilde x \in [g(t^*), h(t^*)] \cap \partial \big\{x\in (g(t^*), h(t^*))  : \ w(t^*, x) >0   \big\}.
$$
Then at $(t^*,\tilde  x)$, $u(t^*, \tilde x)=0$ and by (\ref{pf-lm-eqn-w}), we get
$$
0\ge w_t(t^*, \tilde x)\ge d\int_{g(t^*)}^{h(t^*)}J(\tilde x-y)  w(t^*,y) dy > 0,
$$
due to assumption \textbf{(J)}.
This is impossible. Thus $w(t^*, x)=0$ for $x\in (g(t^*), h(t^*))$.
Hence, by (\ref{pf-lm-eqn-w}), for $x\in [-h_0, h_0]\subset (g(t^*), h(t^*))$,
\begin{eqnarray*}
-w(0,x)&=&w(t^*, x ) -w(0,x)\\
 &\geq & d\int_0^{t^*}\int_{g(t)}^{h(t)}J(x-y)w(t,y)dy dt + \int_0^{t^*} p(t,x )w(t,x)dt\geq 0.
\end{eqnarray*}
This implies that $u(0,x) \equiv0$ in $[-h_0, h_0]$, which is a contradiction.
\end{proof}

The following result will play a crucial role in the proof of Theorem 2.1.

\begin{lemma}
Suppose that {\rm \bf (J)} and {\rm \bf (f1)-(f2)} hold, $h_0>0$ and $u_0(x)$
satisfies \eqref{102}. Then for any $T>0$ and $(g, h)\in\mathbb G_{h_0,T}\times \mathbb H_{h_0, T}$,
 the following problem
\begin{equation}
\left\{
\begin{aligned}
&v_t=d\int_{g(t)}^{h(t)}J(x-y)v(t,y)dy-dv+f(t,x,v),
& &0<t< T,~x\in(g(t),h(t)),\\
&v(t,h(t))=v(t,g(t))=0,& &0<t< T,\\
&v(0,x)=u_0(x),& &x\in[-h_0,h_0]
\end{aligned}
\right.
\label{201}
\end{equation}
admits a unique  solution, denoted by $V_{g,h}(t,x)$.
 Moreover $V_{g,h}$ satisfies
\begin{equation}
0<V_{g,h}(t,x)\le \max\left\{\max_{-h_0\le x\le h_0}u_0(x),
~K_0\right\}~\mbox{ for } 0<t< T,~x\in(g(t),h(t)),
\label{v-bound}
\end{equation}
where $K_0$ is defined in the assumption {\rm\bf (f2)}.
\label{Lemma202}
\end{lemma}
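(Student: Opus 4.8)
The plan is to recast \eqref{201}, for the prescribed pair $(g,h)$, as a fixed point problem solvable by the contraction mapping theorem, and then to read off \eqref{v-bound} from the construction together with the Maximum Principle, Lemma \ref{lemma-MP}. The crucial observation — and the reason the parabolic machinery used for \eqref{1} is not needed here — is that \eqref{201} contains no derivative in $x$, so along each vertical line $\{x=\text{const}\}$ it is an \emph{ordinary} differential equation in $t$, the only coupling between different values of $x$ being the zeroth order term $d\int_{g(t)}^{h(t)}J(x-y)v(t,y)\,dy$. Because $(g(t),h(t))$ expands, a given $x$ lies in it only for $t\in[\tau(x),T]$, where $\tau(x)=0$ if $|x|\le h_0$, $\tau(x)=h^{-1}(x)$ if $h_0<x\le h(T)$, and $\tau(x)=g^{-1}(x)$ if $g(T)\le x<-h_0$; these are well defined and continuous since $g\in\mathbb G_{h_0,T}$ and $h\in\mathbb H_{h_0,T}$ are continuous and strictly monotone. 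I would therefore treat \eqref{201} as a \emph{parameterized family of scalar ODEs}, with parameter $x$.

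Set $M:=\max\{\max_{[-h_0,h_0]}u_0,\,K_0\}$ and let $\mathcal B:=\{v\in\mathbb X_{u_0,g,h}: 0\le v\le M\text{ on }\overline\Omega_{g,h}\}$, a (clearly nonempty) closed, hence complete, subset of $C(\overline\Omega_{g,h})$. Given $v\in\mathcal B$, define $\Gamma v=\bar v$, where for each $x$ the function $t\mapsto\bar v(t,x)$ is the solution on $[\tau(x),T]$ of
\[
\bar v_t=-d\bar v+d\int_{g(t)}^{h(t)}J(x-y)v(t,y)\,dy+f(t,x,\bar v),\qquad \bar v(\tau(x),x)=\begin{cases}u_0(x)&\text{if }\tau(x)=0,\\ 0&\text{if }\tau(x)>0.\end{cases}
\]
By \textbf{(f1)} this scalar ODE has a unique local solution, and the constants $0$ and $M$ are, respectively, a sub- and a supersolution of it: its right-hand side at $\bar v=0$ equals $d\int_{g(t)}^{h(t)}J(x-y)v(t,y)\,dy\ge0$, and at $\bar v=M$ equals $-dM+dM\int_{g(t)}^{h(t)}J(x-y)\,dy+f(t,x,M)\le f(t,x,M)\le0$ by \textbf{(J)}, \textbf{(f2)} and $M\ge K_0$. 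Hence $0\le\bar v\le M$ wherever it is defined, so it exists on all of $[\tau(x),T]$, and $\bar v$ satisfies the correct lateral and initial conditions by construction.

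The hard part will be to show that $\Gamma v\in C(\overline\Omega_{g,h})$, so that $\Gamma(\mathcal B)\subseteq\mathcal B$; this is exactly where the parameterized ODE viewpoint does real work. All the data of the ODE family depend continuously on the parameter $x$: the source $d\int_{g(t)}^{h(t)}J(x-y)v(t,y)\,dy$ is jointly continuous in $(t,x)$ by \textbf{(J)} and $v\in C(\overline\Omega_{g,h})$, $f(t,x,\cdot)$ is continuous in $(t,x)$, $\tau(\cdot)$ is continuous and monotone, and $x\mapsto\bar v(\tau(x),x)$ is continuous, the matching at $x=\pm h_0$ being ensured by $u_0(\pm h_0)=0$ from \eqref{102}. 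Continuous dependence of ODE solutions on parameters and initial data then gives joint continuity of $(t,x)\mapsto\bar v(t,x)$ on $\overline\Omega_{g,h}$; continuity up to the curves $x=g(t)$, $x=h(t)$ and the corners $(0,\pm h_0)$ follows from the uniform bound $|\bar v_t|\le(2d+K(M))M$, since there $\bar v(t,x)=\bar v(\tau(x),x)$ and $t-\tau(x)\to0$. For the contraction, given $v_1,v_2\in\mathcal B$ the difference $W:=\Gamma v_1-\Gamma v_2$ satisfies $W(\tau(x),x)=0$, and multiplying its equation by $e^{dt}$, integrating from $\tau(x)$, and using $\int_{g(t)}^{h(t)}J(x-y)\,dy\le1$ together with $|f(t,x,\bar v_1)-f(t,x,\bar v_2)|\le K(M)|W|$ from \textbf{(f1)}, yields in the weighted norm $\|w\|_\lambda:=\sup_{\overline\Omega_{g,h}}e^{-\lambda t}|w|$ the estimate $\|W\|_\lambda\le\frac{d}{d+\lambda-K(M)}\|v_1-v_2\|_\lambda$ whenever $\lambda>K(M)$, a strict contraction for such $\lambda$; since $\|\cdot\|_\lambda$ is equivalent to the sup norm on $[0,T]$, $\mathcal B$ is complete for it, and the contraction mapping theorem yields a unique fixed point $V_{g,h}\in\mathcal B$.

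It remains to check that $V_{g,h}$ is a solution and to derive \eqref{v-bound} and uniqueness. Being a fixed point, $V_{g,h}(\cdot,x)$ solves the above scalar ODE with $v=V_{g,h}$, so $\partial_tV_{g,h}$ exists and equals the right-hand side of \eqref{201}, which is continuous in $(t,x)$; hence $V_{g,h}$ solves \eqref{201}, and $V_{g,h}\in\mathcal B$ gives the upper bound in \eqref{v-bound}. Positivity $V_{g,h}>0$ in $\Omega_{g,h}$ follows from the strong form of Lemma \ref{lemma-MP} applied with $c(t,x)=f(t,x,V_{g,h})/V_{g,h}$ (set equal to $0$ where $V_{g,h}=0$; $|c|\le K(M)$ by \textbf{(f1)}), since $u_0\not\equiv0$ by \eqref{102}. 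Finally, any solution $v$ of \eqref{201} is continuous on the compact set $\overline\Omega_{g,h}$, hence bounded; Lemma \ref{lemma-MP} with the same choice of $c$ gives $v\ge0$, and comparison with the constant supersolution $M$ (again via Lemma \ref{lemma-MP}, using \textbf{(f2)}) gives $v\le M$; thus every solution lies in $\mathcal B$ and, by uniqueness of the fixed point, equals $V_{g,h}$.
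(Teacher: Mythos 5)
Your proposal is correct, and it rests on the same structural insight as the paper's proof of Lemma \ref{Lemma202}: since \eqref{201} has no $x$-derivative, it is a family of scalar ODEs in $t$ parameterized by $x$, with entry time $\tau(x)$ (the paper's $t_x$) and coupling only through the nonlocal source, so the natural map to iterate is ``freeze the source at $\phi$, solve the ODE along each line $x=\mathrm{const}$''. Where you genuinely diverge is in the fixed-point mechanics: the paper proves that this map is a contraction only on a short time interval $[0,s]$ (with $s$ small so that $s\,d\,e^{2K_1(M)s}\le\frac12$), on a ball $\|\phi\|\le M$ that is invariant only for small $s$, and then must run a separate a priori bound (the first-touching argument giving $v\le\max\{\|u_0\|_\infty,K_0\}$) to identify the fixed point with any solution and to extend step by step up to $T$; you instead work on the whole of $[0,T]$ at once, using the order interval $\mathcal B=\{0\le v\le M\}$ with $M=\max\{\|u_0\|_\infty,K_0\}$, which is invariant by an elementary sub/supersolution comparison for the scalar ODEs (here \textbf{(J)} gives $\int_{g(t)}^{h(t)}J(x-y)\,dy\le1$ and \textbf{(f2)} gives $f(t,x,M)\le0$), and a Bielecki-type weighted norm $\sup e^{-\lambda t}|\cdot|$ with $\lambda>K(M)$ to get a global contraction. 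This buys you a cleaner argument: no small-time bookkeeping, no extension step, and the bound in \eqref{v-bound} is built into the invariant set rather than proved separately; the paper's local-contraction-plus-extension scheme is heavier here but is essentially the template it reuses for the free boundary map in Theorem \ref{Thm22}, where a weighted-norm shortcut is not available. Two small points to tighten, neither a real gap: since \textbf{(f1)} only gives Lipschitz continuity of $f(t,x,\cdot)$ on $[0,L]$ and $f$ is only defined for $u\ge0$, you should (as the paper does with $\tilde f$) extend $f$ by $0$ for negative $u$ before invoking ODE uniqueness and the comparison with the subsolution $0$, and likewise state that an arbitrary solution of \eqref{201} is nonnegative either by definition (membership in $\mathbb X_{u_0,g,h}$) or after this extension; and in the uniqueness step you should say explicitly that any solution $v$ with $0\le v\le M$ is a fixed point of your map $\Gamma$, because along each line $x=\mathrm{const}$ it solves the same scalar ODE with the same data at $t=\tau(x)$ (using $v(\tau(x),x)=0$ from the lateral boundary condition when $\tau(x)>0$), so ODE uniqueness gives $\Gamma v=v$. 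The continuity of $\Gamma v$ up to the free boundaries via the uniform bound on $|\bar v_t|$ and $t-\tau(x)\to0$, and the final positivity via the strong form of Lemma \ref{lemma-MP} with $c=f(t,x,V)/V$, match the paper's own treatment.
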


\begin{proof} We break the proof into three steps.

\medskip

\noindent {\bf Step 1:} {\it A parametrized ODE problem.}

For given $x\in[g(T),h(T)]$, define
\begin{equation}
\tilde u_0(x)=\left\{
\begin{aligned}
&0,& &x\not\in[-h_0,h_0],\\
&u_0(x),& &x\in[-h_0,h_0]
\end{aligned}
\right.
~~~\text{and }~~~t_x=\left\{
\begin{aligned}
&t_{x,g}& &\mbox{ if  $x\in[g(T),-h_0)$ and $g(t_{x,g})=x$},\\
&0& &\mbox{ if } x\in[-h_0,h_0],\\
&t_{x,h}& &\mbox{ if $x\in(h_0,h(T)]$ and $h(t_{x,h})=x$}.
\end{aligned}
\right.
\label{definition-of-t_x}
\end{equation}
Clearly $t_x=T$ for $x=g(T)$ and $x=h(T)$, and $t_x<T$ for $x\in (g(T), h(T))$.
 For any given $\phi\in
\mathbb{X}_{u_0, g,h}$, consider the following ODE initial value problem (with parameter $x$):
\begin{equation}\label{202}
\left\{
\begin{aligned}
 &v_t=d\int_{g(t)}^{h(t)}J(x-y)\phi(t,y)dy-dv(t,x)+\tilde f(t,x,v),& &t_x<t\le T,\\
&v(t_x,x)=\tilde u_0(x),& &x\in(g(T),h(T)),
\end{aligned}
\right.
\end{equation}
where $\tilde f(t,x,v)=0$ for $v<0$, and $\tilde f(t,x,v)=f(t,x,v)$ for $v\geq 0$. Clearly $\tilde f$ also satisfies  {\rm \bf (f1)-(f2)}.
Denote
$$
F(t,x,v)=d\int_{g(t)}^{h(t)}J(x-y)\phi(t,y)dy-dv(t,x)
+\tilde f(t,x,v).
$$
Thanks to the assumption \textbf{(f1)}, for any $v_1,v_2
\in (-\infty, L]$, we have
$$
\Big|F(t,x,v_1)-F(t,x,v_2)\Big|\le\Big|\tilde f(t,x,v_1)-\tilde f(t,x,v_2)\Big|
+d\Big|v_1-v_2\Big|\le K_1\Big|v_1-v_2\Big|,
$$
where
\[
  L:=1+\max\Big\{\|\phi\|_{C(\overline\Omega_T)},  K_0\Big\},\; K_1:=d+K(L).
\]
In other words, the function $F(t,x,v)$ is Lipschitz continuous in $v$ for $v\in (-\infty, L]$
 with Lipschitz
constant $K_1$, uniformly for $t\in [0, T]$ and $x\in (g(T), h(T))$. Additionally, $F(t,x,v)$ is continuous
in all its variables in this range.
Hence it follows from the Fundamental Theorem of
ODEs  that, for every fixed $x\in (g(T), h(T))$, problem (\ref{202}) admits a
unique solution, denoted by $V_{\phi}(t,x)$ defined in some  interval $[t_x,T_x)$ of $t$.

We claim that $t\to V_\phi(t,x)$ can be uniquely extended to $[t_x, T]$. Clearly it suffices to show that if $V_\phi(t,x)$ is uniquely defined for $t\in [t_x, \tilde T]$ with $\tilde T\in (t_x, T]$, then
\begin{equation}\label{V-bd}
0\leq V_\phi(t,x)< L \mbox{ for } t\in (t_x, \tilde T].
\end{equation}

We first show that $V_\phi(t,x)<L$ for $ t\in (t_x, \tilde T]$. Arguing indirectly we assume that this inequality does not hold,
and hence, in view of $V_\phi(t_x,x)=\tilde u_0(x)\leq \|\phi\|_{C(\overline\Omega_T)} <L$, there exists some $t^*\in (t_x, \tilde T]$ such that
$V_\phi(t,x)<L$ for $t\in (t_x, t^*)$ and $V_\phi (t^*,x)=L$. It follows that $(V_\phi)_t(t^*,x)\geq 0$ and $\tilde f(t^*,x, V_\phi(t^*,x))\leq 0$
(due to $L>K_0$). We thus obtain from the differential equation satisfied by $V_\phi(t,x)$ that
\[
dL=dV_\phi(t^*,x)\leq d\int_{g(t^*)}^{h(t^*)}J(x-y)\phi(t^*,y)dy\leq d\|\phi\|_{C(\overline\Omega_T)}\leq d(L-1).
\]
It follows that $L\leq L-1$. This contradiction proves our claim.

We now prove the first inequality in \eqref{V-bd}. Since
\[
\tilde f(t,x,v)=\tilde f(t,x,v)-\tilde f(t,x,0)\geq -K(L) |v| \mbox{ for } v\in (-\infty, L],
\]
we have
\[
(V_\phi)_t\geq -K_1{\rm sgn}(V_\phi) V_\phi +d\int_{g(t)}^{h(t)}J(x-y)\phi(t,y)dy\geq -K_1 {\rm sgn}(V_\phi)V_\phi \mbox{ for } t\in [t_x, \tilde t].
\]
Since $V_\phi(t_x,x)=\tilde u_0(x)\geq 0$, the above inequality immediately gives $V_\phi(t,x)\geq 0$ for $t\in [t_x, \tilde T]$.
 We have thus proved \eqref{V-bd}, and therefore the solution $V_\phi(t,x)$ of  \eqref{202} is uniquely defined for $t\in [t_x, T]$.

\medskip

\noindent {\bf Step 2:} {\it A fixed point problem.}

Let us note that $V_{\phi}(0,x)=u_0(x)$ for $x\in [-h_0, h_0]$,
and $V_{\phi}(t,x)=0$ for $t\in [0, T)$ and $x\in\partial(g(t), h(t))=\big\{ g(t), h(t)\big\}$. Moreover, by the continuous dependence of the unique solution on the initial value and on the parameters in the equation, we also see that $V_\phi(t,x)$ is continuous in $(t,x)\in\overline \Omega_s$
for any $s\in (0, T)$, and hence $V_\phi|_{\overline\Omega_s}\in\mathbb X_s$, where for convenience of notation, we define
for any $s\in(0,T]$,
\[
\Omega_s:=\Big\{(t,x)\in\Omega_{g,h}: t\leq s\Big\},\; \mathbb X_s:=\Big\{\psi|_{\overline\Omega_s}:\;\psi\in\mathbb X_{u_0,g,h}\Big\}.
\]
We then
define the mapping $\Gamma_s: \mathbb X_s\to \mathbb X_s$ by
\[
\Gamma_s \psi=V_{\tilde \psi}|_{\overline\Omega_s},
\]
where $\tilde\psi\in \mathbb X_T$ is an extension of $\psi$ from $\mathbb X_s$ to $\mathbb X_T$. From the definition of
$V_{\tilde\psi}$ we easily see that $\Gamma_s\psi$ does not depend on the particular form of the extension from $\psi$ to $\tilde \psi$.
We also see that if $\Gamma_s \psi=\psi$ then $\psi(t,x)$ solves \eqref{201} for $t\in (0, s]$, and vice versa.

The main task in this step is to show that for sufficiently small $s>0$, $\Gamma_s$ has a unique fixed point in $\mathbb X_s$.
We prove this conclusion by the contraction mapping theorem; namely we prove that for such $s$, $\Gamma_s$ is a contraction mapping on a closed subset of $\mathbb X_s$, and any fixed point of $\Gamma_s$ in $\mathbb X_s$ lies in this closed subset.

Firstly we note that
 $\mathbb{X}_s$ is a complete metric space
with the metric
$$
d(\phi_1,\phi_2)=\|\phi_1-\phi_2\|_{C(\overline\Omega_s)}.
$$
Fix $M> \max\big\{4\|u_0\|_\infty, K_0\big\}$ and define
\[
\mathbb X_s^M:=\big\{ \phi\in \mathbb X_s: \; \|\phi\|_{C(\overline \Omega_s)}\leq M\big\}.
\]
Clearly $\mathbb X_s^M$ is a closed subset of $\mathbb X_s$. We show next that there exists $\delta>0$ small depending on $M$ such that
for every $s\in (0, \delta]$, $\Gamma_s$ maps $\mathbb X_s^M$ into itself, and is a contraction mapping.

Let $\phi\in \mathbb X_s^M$ and denote $v=\Gamma_s \phi$. Then $v$ solves  \eqref{202}  with $T$ replaced by $s$. It follows that \eqref{V-bd} holds
with \textcolor{red}{$\tilde T$} replaced by $s$ and $V_\phi$ replaced by $v$. We prove that for all small $s>0$,
\[
v(t,x)\leq M \mbox{ for } t\in [t_x, s],\; x\in (g(s), h(s)),
\]
which is equivalent to $\|v\|_{C(\overline \Omega_s)}\leq M$.

Let us observe that due to {\rm\bf (f1)-(f2)}, there exists $K_*>0$ such that
\[
f(t,x,u)\leq K_* u \mbox{ for all } u\in [0,\infty).
\]
Now from \eqref{202} we obtain, for $ t\in [t_x, s]$ and $ x\in (g(s), h(s))$,
\[
v_t\leq d\int_{g(t)}^{h(t)}J(x-y)\phi(t,y)dy+K_*v\leq d\|\phi\|_{C(\overline \Omega_s)}+K_*v.
\]
It follows that, for such $t$ and $x$,
\[
e^{-K_*t} v(t, x)- e^{-K_*t_x}  v(t_x,x)\leq d\int_{t_x}^t e^{-K_*\tau}d\tau \|\phi\|_{C(\overline \Omega_s)},
\]
and
\[
v(t,x)\leq \|u_0\|_\infty e^{K_*t}+d(t-t_x) e^{K_*t}  \|\phi\|_{C(\overline \Omega_s)}\leq \|u_0\|_\infty e^{K_*s}+ds e^{K_* s } M.
\]
If $\delta_1>0$ is small enough such that
\[
d\delta_1  e^{K_* \delta_1} \leq \frac 14,\; e^{K_*\delta_1}\leq 2,
\]
then for $s\in (0,\delta_1]$ we have
\[
v(t,x)\leq \frac 14(8 \|u_0\|_\infty+M)\leq M \mbox{ in } \Omega_s.
\]
Thus $v=\Gamma_s\phi\in \mathbb X_s^M$, as we wanted. Let us note from the above choice of $\delta_1$ that it only depends on
$d$ and $K_*$.

Next we show that by shrinking $\delta_1$ if necessary, $\Gamma_s$ is a contraction mapping on $\mathbb X_s^M$ when $s\in (0, \delta_1]$.
So let $\phi_1,\phi_2\in\mathbb{X}_s^M$, and denote
$V_i=\Gamma_s\phi_i$, $i=1,2$. Then $w=V_1-V_2$ satisfies
\begin{equation}
\left\{
\begin{aligned}
&w_t+c_1(t,x)w=d\int_{g(t)}^{h(t)}J(x-y)\left(\phi_1-\phi_2\right)(t,y)dy,& &t_x<t\le s,~x\in(g(t),h(t)),\\
&w(t_x,x)=0,& &x\in (g(t), h(t)),
\end{aligned}
\right.
\label{203}
\end{equation}
where
$$
c_1(t,x):=d-\frac{f(t,x,V_1)-f(t,x,V_2)}{V_1-V_2} \mbox{ and hence } \|c_1\|_\infty\leq K_1(M):=d+K(M).
$$
It follows that, for $t_x<t\le s$ and $x\in  (g(t),h(t))$,
\begin{align*}
w(t,x)=de^{-\int_{t_x}^tc_1(\tau,x)d\tau}
\int_{t_x}^te^{\int_{t_x}^\xi c_1(\tau,x)d\tau}\int_{g(\xi)}^{h(\xi)}J(x-y)\left(
\phi_1-\phi_2\right)(\xi,y)dyd\xi.
\end{align*}
We thus deduce, for such $t$ and $x$,
\begin{align*}
\Big|w(t,x)\Big|&\le de^{K_1(M)(t-t_x)}\|\phi_1-\phi_2\|_{C(
\overline\Omega_s)}\int_{t_x}^te^{K_1(M)(\xi-t_x)}d\xi\\
&\le de^{K_1(M)s}\|\phi_1-\phi_2\|_{C(\overline \Omega_s)}\cdot
(t-t_x)e^{K_1(M)(t-t_x)}\\
&\le s d\, e^{2K_1(M)s}\|\phi_1-\phi_2\|_{C(\overline\Omega_s)}.
\end{align*}
Hence
$$
\|\Gamma_s \phi_1-\Gamma_s\phi_2\|_{C(\overline\Omega_s)} =\|w\|_{C(\overline\Omega_s)}\le\frac 12\|\phi_1-\phi_2\|_{C(\overline
\Omega_s)}~\text{ for }~s\in (0, \delta],
$$
provided that $\delta\in (0,\delta_1]$ satisfies
\[
\delta d\, e^{2K_1(M)\delta}\leq \frac 12.
\]
For such $s$ we may now apply the Contraction Mapping Theorem to conclude that $\Gamma_s$ has a unique fixed point $V$ in $\mathbb X_s^M$. It follows that $v=V$ solves \eqref{201} for $0<t\leq s$.

If we can show that any solution $v$ of \eqref{201} satisfies $0\leq v\leq M$ in $\Omega_s$ then $v$ must coincides with the unique fixed point $V$ of $\Gamma_s$ in $\mathbb X_s^M$. We next prove such an estimate for $v$.
We note that $v\geq 0$ already follows from \eqref{V-bd}. So we only need to prove $v\leq M$. We actually prove the following stronger inequality
\begin{equation}\label{v-upper-bd}
v(t,x)\leq M_0:=\max\big\{\|u_0\|_\infty,\; K_0\big\}<M \mbox{ for } t\in [t_x, s],\; x\in (g(s), h(s)).
\end{equation}
It suffices to show that the above inequality holds with $M_0$ replaced by $M_0+\epsilon$ for any given $\epsilon>0$.
We argue by contradiction. Suppose this is not true. Then due to
$v(t_x, x)=\tilde u_0(x)\leq \|u_0\|_\infty<M_\epsilon:=M_0+\epsilon$, there exists some $x^*\in (g(s), h(s))$ and $t^*\in (t_x, s]$ such that
\[
v(t^*, x^*)=M_\epsilon \mbox{ and }
0\leq v(t,x)<M_\epsilon \mbox{ for } t\in [t_x, t^*),\; x\in (g(s), h(s)).
\]
It follows that $v_t(t^*, x^*)\geq 0$ and $f(t^*, x^*, v(t^*, x^*))\leq 0$.
Hence from \eqref{201} we obtain
\[
0\leq v_t(t^*,x^*)\leq d\int_{g(t^*)}^{h(t^*)}J(x^*-y) v(t^*,y)dy-d v(t^*, x^*).
\]
Since $v(t^*,g(t^*))=v(t^*, h(t^*))=0$, for $y\in (g(t^*), h(t^*))$ but close to the boundary of this interval, $v(t^*,y)<M_\epsilon$.
It follows that
\[
dM_\epsilon=dv(t^*,x^*)\leq d\int_{g(t^*)}^{h(t^*)}J(x^*-y) v(t^*,y)dy<dM_\epsilon\int_{g(t^*)}^{h(t^*)}J(x^*-y)dy\leq dM_\epsilon.
\]
This contradiction proves \eqref{v-upper-bd}.
 Thus $v$ satisfies the wanted inequality and hence coincides with the unique fixed point
of $\Gamma_s$ in $\mathbb X_s^M$. We have now proved the fact that for every $s\in (0,\delta]$, $\Gamma_s$ has a unique fixed point
in $\mathbb X_s$.

\medskip

\noindent
{\bf Step 3:} {\it Extension and completion of the proof.}

From Step 2 we know that \eqref{201} has a unique solution defined for $t\in [0, s]$ with $s\in (0, \delta]$. Applying Step 2
to \eqref{201} but with the initial time $t=0$ replaced by $t=s$ we see that the unique solution can be extended to a slightly bigger interval
of $t$. Moreover, by \eqref{v-upper-bd} and the definition of $\delta$ in Step 2, we see that the new extension can be done by
increasing $t$ by at least some $\tilde\delta>0$, with $\tilde\delta$ depends only on $M_0$ and $d$. Furthermore, from the above proof of
\eqref{v-upper-bd} we easily see that the extended solution $v$ satisfies \eqref{v-upper-bd} in the newly extended range of $t$.
Thus the extension by $\tilde \delta$ for $t$ can be repeated. Clearly by repeating this process finitely many times, the solution of
\eqref{201} will be uniquely extended to $t\in [t_x, T)$. As explained above, now
 \eqref{v-upper-bd} holds for $t\in [t_x, T)$, and hence to prove \eqref{v-bound}, it only remains to show $V_{g,h}(t,x)>0$ for $t\in (0, T)$ and $x\in (g(t), h(t))$. However, due to {\rm\bf (f1)-(f2)} and \eqref{v-upper-bd}, we may write $f(t,x,V_{g,h}(t,x))=c(t,x)V_{g,h}(t,x)$ with $c\in L^\infty( \Omega_s)$ for any $s\in (0, T)$. Thus we can use Lemma 2.2 to conclude.
\end{proof}

\vspace{10pt}
\begin{proof}[Proof of Theorem \ref{Thm22}]
By Lemma \ref{Lemma202},
for any $T>0$ and $(h,g)\in\mathbb{G}_{h_0,T}\times\mathbb H_{h_0,T}$, we can find a
unique $V_{g,h}\in\mathbb{X}_{u_0,g,h}$ that solves (\ref{201}), and it has the property
$$
0<V_{g,h}(t,x)\le M_0:=\max\big\{\|u_0\|_\infty,~K_0\big\} \mbox{ for  } (t,x)\in
\Omega_{g,h}.
$$

Using such a $V_{g,h}(t,x)$, we define the mapping $\tilde \Gamma$ by $\tilde \Gamma (g,h)=
\left(\tilde g,\,\tilde h\right)$, where
\begin{equation}
\left\{
\begin{aligned}
&\tilde h(t)=h_0+\mu\int_0^t\int_{g(\tau)}^{h(\tau)}\int_{h(\tau)
}^{+\infty}J(x-y)V_{g,h}(\tau,x)dydxd\tau,\\
&\tilde g(t)=-h_0-\mu\int_0^t\int_{g(\tau)}^{h(\tau)}\int_{-\infty
}^{g(\tau)}J(x-y)V_{g,h}(\tau,x)dydxd\tau
\end{aligned}
\right.
\label{2003}
\end{equation}
for $0<t\leq T$. To simplify notations, we will write
\[
\mathbb G_T=\mathbb G_{h_0,T},\; \mathbb H_T=\mathbb H_{h_0,T},\; \Omega_T=\Omega_{g,h},\; \mathbb X_T=\mathbb X_{u_0, g, h}.
\]

To prove this theorem, we will show
that if $T$ is small enough, then $\tilde\Gamma$ maps  a suitable closed subset $\Sigma_T$ of $\mathbb{G}_T\times\mathbb{H}_T$ into itself,
and  is a contraction mapping. This clearly implies that $\tilde \Gamma$ has a unique fixed point in $\Sigma_T$,
which gives a solution $(V_{g,h}, g, h)$ of \eqref{101} defined for $t\in (0, T]$. We will show that any solution $(u, g,h)$ of \eqref{101}  with  $(g,h)\in \mathbb{G}_T\times\mathbb{H}_T$ must satisfy $(g,h)\in \Sigma_T$, and hence $(g,h)$ must coincide with the unique fixed point of $\tilde\Gamma$ in $\Sigma_T$, which then implies that the solution $(u,g,h)$ of \eqref{101} is unique.  We will finally  show that this unique solution
defined locally in time can be extended uniquely for all $t>0$.

We now carry out this plan in several steps.
\medskip

\noindent
{\bf Step 1:} {\it Properties of $(\tilde g, \tilde h)$ and a closed subset of $\mathbb{G}_T\times\mathbb{H}_T$.}

Let $(g,h)\in \mathbb{G}_T\times\mathbb{H}_T$.
The definitions of $\tilde h(t)$ and
$\tilde g(t)$ indicate that they belong to $C^1([0, T])$  and for $0<t\le T$,
\begin{equation}
\left\{
\begin{aligned}
&\tilde h'(t)= \mu \int_{g(t)}^{h(t)}\int_{h(t)
}^{+\infty}J(x-y)dyV_{g,h}(t,x)dx,\\
&\tilde g'(t)=-\mu\int_{g(t)}^{h(t)}\int_{-\infty
}^{g(t)}J(x-y)dyV_{g,h}(t,x)dx.
\end{aligned}
\right.
\label{2006}
\end{equation}
These identities already imply $\tilde\Gamma (g,h)=(\tilde g, \tilde h)\in  \mathbb{G}_T\times\mathbb{H}_T$, but in order to show $\tilde \Gamma$ is a contraction mapping, we need to
prove some further properties of $\tilde g$ and $\tilde h$, and then choose a suitable closed subset of  $\mathbb{G}_T\times\mathbb{H}_T$, which
is invariant under $\tilde\Gamma$, and on which $\tilde \Gamma$ is a contraction mapping.

Since  $v=V_{g,h}$ solves (\ref{201})  we  obtain by using {\bf (f1)-(f2)} and  \eqref{v-bound} that
\begin{equation}
\left\{
\begin{aligned}
&\left(V_{g,h}\right)_t(t,x)\ge-dV_{g,h}(t,x)-K(M_0)V_{g,h}(t,x), & &0<t\le T,~x\in(g(t),h(t)),\\
&V_{g,h}(t,h(t))=V_{g,h}(t,g(t))=0,& &0<t\le T,\\
&V_{g,h}(0,x)=u_0(x),& &x\in[-h_0,h_0].
\end{aligned}
\right.
\label{2007}
\end{equation}
It follows that
\begin{equation}
\label{V>cu_0}
V_{g,h}(t,x)\ge e^{-(d+K(M_0))t}u_0(x)\ge e^{-(d+K(M_0))T}u_0(x) \mbox{ for } x\in [-h_0, h_0],\; t\in (0, T].
\end{equation}
By \textbf{(J)}  there
exist constants $\epsilon_0\in (0, h_0/4)$ and $\delta_0>0$ such that
\begin{equation}
\label{J>delta_0}
\mbox{$J(x-y)
\ge\delta_0$ if $|x-y|\le\epsilon_0$.}
\end{equation}
Using \eqref{2006} we easily see
\[
[\tilde h(t)-\tilde g(t)]'\leq \mu M_0 [h(t)-g(t)] \mbox{ for } t\in [0, T].
\]
We now assume  that $(g, h)$ has the extra property that
\[\mbox{
    $h(T)-g(T)\leq 2h_0+\frac{\epsilon_0}{4}$.}
    \]
    Then
\[
\tilde h(t)-\tilde g(t)\leq 2h_0 +T\mu M_0(2h_0+\frac{\epsilon_0}{4})\leq 2h_0+\frac{\epsilon_0}{4} \mbox{ for } t\in [0, T],
\]
provided that $T>0$ is small enough, depending on $ (\mu, M_0, h_0, \epsilon_0)$.
We fix such a  $T$ and notice from the above extra assumption on $(g, h)$  that
\[
h(t)\in [h_0,  h_0+ \frac{\epsilon_0}{4}],\; g(t)\in [-h_0-\frac{\epsilon_0}{4}, -h_0]  \mbox{ for } t\in [0, T].
\]
Combining this with \eqref{V>cu_0} and \eqref{J>delta_0} we obtain, for such $T$ and $t\in (0, T]$,
\begin{align*}
\int_{g(t)}^{h(t)}\int_{h(t)}^{+\infty}J(x-y)V_{g,h}
(t,x)dydx&\ge\int_{h(t)-\frac{\epsilon_0}{2}}^{h(t)}\int_{
h(t)}^{h(t)+\frac{\epsilon_0}{2}}J(x-y)V_{g,h}(t,x)dydx\\
&\ge e^{-(d+K(M_0))T}\int_{h_0-\frac{\epsilon_0}{4}}^{h_0}\int_{
h_0+\frac{\epsilon_0}{4}}^{h_0+\frac{\epsilon_0}{2}}J(x-y)dyu_0(x)dx\\
&\ge\frac 14\epsilon_0\delta_0e^{-(d+K(M_0))T}\int_{h_0-\frac{
\epsilon_0}{4}}^{h_0}u_0(x)dx=:c_0>0,
\end{align*}
with $c_0$ depending only on $(J, u_0, f)$. Thus, for sufficiently small $T=T(\mu, M_0, h_0, \epsilon_0)>0$,
\begin{equation}
\label{tilde-h'}
\tilde h'(t)\geq \mu c_0 \mbox{ for } t\in [0, T].
\end{equation}
We can similarly obtain, for such $T$,
\begin{equation}
\label{tilde-g'}
\tilde g'(t)\leq -\mu \tilde c_0 \mbox{ for } t\in [0, T],
\end{equation}
for some positive constant $\tilde c_0$ depending only on $(J, u_0, f)$.

We now define, for $s\in (0, T_0]:=(0, T(\mu, M_0, h_0, \epsilon_0)]$,
\begin{align*}
\Sigma_s:=&\Big\{(g,h)\in\mathbb G_s\times \mathbb H_s: \sup_{0\leq t_1<t_2\leq s}\frac{g(t_2)-g(t_1)}{t_2-t_1}\leq -\mu \tilde c_0,\;
\inf_{0\leq t_1<t_2\leq s}\frac{h(t_2)-h(t_1)}{t_2-t_1}\geq \mu c_0,\\
& \hspace{8cm} h(t)-g(t)\leq 2h_0+\frac{\epsilon_0}{4} \mbox{ for } t\in [0, s]\Big\}.
\end{align*}
Our analysis above shows that
\[
\tilde \Gamma (\Sigma_s)\subset \Sigma_s \mbox{ for } s\in (0, T_0].
\]

\medskip

\noindent {\bf Step 2:} {\it $\tilde\Gamma$ is a
contraction mapping on $\Sigma_s$ for sufficiently small $s>0$.}

Let $s\in (0, T_0]$, $(h_1,g_1), (h_2,g_2)\in\Sigma_s$,
 and note that $\Sigma_s$ is a complete metric space
under the metric
$$
d\left((h_1,g_1),(h_2,g_2)\right)=\|h_1-h_2\|_{C([0,s])}+
\|g_1-g_2\|_{C([0,s])}.
$$
For $i=1,2$, let us denote
\[
\mbox{
 $V_i(t,x)=V_{h_i,g_i}(t,x)$ and
$\tilde\Gamma\left(h_i,g_i\right)=\left(\tilde h_i,\tilde g_i
\right)$. }
\]
We also define
\begin{align*}
&H_1(t)=\min\left\{h_1(t),~h_2(t)\right\},~~~H_2(t)=\max\left\{
h_1(t),~h_2(t)\right\},\\
&G_1(t)=\min\left\{g_1(t),~g_2(t)\right\},~~~~G_2(t)=\max\left\{
g_1(t),~g_2(t)\right\},\\
&\Omega_{*s}=\Omega_{G_1, H_2}=\Omega_{g_1, h_1}\cup \Omega_{g_2, h_2}.
\end{align*}
For $t\in [0,s]$, we have
\[
2h_0\leq H_2(t)-G_1(t)\leq 2h_0+\epsilon_0\leq 3h_0,
\]
and
\begin{align*}
~&\left|\tilde h_1(t)-\tilde h_2(t)\right|
\\
\le~&\mu\int_0^t\left|\int_{g_1(\tau)}^{h_1(\tau)}\int_{h_1(\tau)
}^{+\infty}J(x-y)V_1(\tau,x)dydxd\tau-\int_{g_2(\tau)}^{h_2(\tau)}
\int_{h_2(\tau)}^{+\infty}J(x-y)V_2(\tau,x)dydx\right|d\tau
\\
\le~&\mu\int_0^t\int_{g_1(\tau)}^{h_1(\tau)}\int_{h_1(\tau)
}^{+\infty}J(x-y)\Big|V_1(\tau,x)-V_2(\tau,x)\Big|dydxd\tau
\\
&~+\mu\int_0^t\left|\left(\int_{h_1(\tau)}^{h_2(\tau)}\int_{h_1(\tau)
}^{+\infty}+\int_{g_2(\tau)}^{g_1(\tau)}\int_{h_1(\tau)
}^{+\infty}+\int_{g_2(\tau)}^{h_2(\tau)}\int_{h_1(\tau)}^{h_2(\tau)}\right)
J(x-y)V_2(t,x)dydx\right|d\tau
\\
\le~&3h_0\mu\|V_1-V_2\|_{C(\overline\Omega_{*s})}s+\mu M_0\big(1+3h_0\|J\|_\infty\big)\|h_1-h_2\|_{C([0,s])}s
+\mu M_0\|g_1-g_2\|_{C([0,s])}s\\
\le ~ &C_0s\Big[\|V_1-V_2\|_{C(\overline\Omega_{*s})}+\|h_1-h_2\|_{C([0,s])}+\|g_1-g_2\|_{C([0,s])}\Big],
\end{align*}
where $C_0$ depends only on $(\mu, u_0, J, f)$. Let us recall that $V_i$ is always extended by 0 in $\big([0,\infty)\times \mathbb R\big)\setminus \Omega_{g_i, h_i}$ for $i=1,2$.

Similarly, we have, for $t\in [0, s]$,
\begin{align*}
\Big|\tilde g_1(t)-\tilde g_2(t)\Big|\le C_0s\Big[\|V_1-V_2\|_{C(\overline\Omega_s)}+\|h_1-h_2\|_{C([0,s])}+\|g_1-g_2\|_{C([0,s])}\Big].
\end{align*}
Therefore,
\begin{equation}
\begin{aligned}
&~\|\tilde h_1-\tilde h_2\|_{C([0,s])}+\|\tilde g_1-\tilde g_2
\|_{C([0,s])}\\
\le&~2C_0s\Big[\|V_1-V_2\|_{C(\overline\Omega_{*s})}+\|h_1-h_2\|_{C([0,s])}+\|g_1-g_2\|_{C([0,s])}\Big].
\end{aligned}
\label{20013}
\end{equation}

\medskip

Next, we  estimate $\|V_1-V_2\|_{C(\overline\Omega_{*s})}$. We denote $U=V_1-V_2$, and
for fixed $(t^*,x^*)\in\Omega_{*s}$,  we consider
three cases separately.

\smallskip
\noindent
\underline{Case 1.}  $x^*\in[-h_0, h_0]$.

It
follows from the equations satisfied by $V_1$ and $V_2$ that
$U(0,x^*)=0$ and for $0<t\leq s$,
\begin{equation}
U_t(t,x^*)+c_1(t,x^*)U(t,x^*)=A(t,x^*),
\label{2008}
\end{equation}
where
\begin{align*}
&c_1(t,x^*):=d-\frac{f(t,x^*,V_1(t,x^*))-f(t,x^*,V_2(t,x^*))}
{V_1(t,x^*)-V_2(t,x^*)} \mbox{ and so } \|c_1\|_\infty\leq d+K(M_0),\\
&A(t,x^*):=d\int_{g_1(t)}^{h_1(t)}J(x^*-y)V_1(t,y)dy
-d\int_{g_2(t)}^{h_2(t)}J(x^*-y)V_2(t,y)dy.
\end{align*}
Thus
\begin{align*}
U(t^*,x^*)
=e^{-\int_0^{t^*}c_1(\tau,x^*)d\tau}
\int_0^{t^*}e^{\int_0^tc_1(\tau,x^*)d\tau}A(t,x^*)dt.
\end{align*}
We have
\begin{align*}
\Big|A(t,x^*)\Big|&=d\left|\int_{g_1(t)}^{h_1(t)}J(x^*-y)
V_1(t,y)dy-\int_{g_2(t)}^{h_2(t)}J(x^*-y)V_2(t,y)dy\right|\\
&\le d\int_{g_1(t)}^{h_1(t)}J(x^*-y)\big|V_1(t,y)-V_2(t,y)\big|dy
+d\left|\left(\int_{g_2(t)}^{g_1(t)}+\int_{h_1(t)}^{h_2(t)}\right)J(x^*-y)V_2(t,y)dy\right|\\
&\le d\|U\|_{C(\overline \Omega_{*s})}+d\|J\|_\infty M_0\left[\|h_1-h_2\|_{C([0,s])}+\|g_1-g_2\|_{C([0,s])}\right].
\end{align*}
Thus for some $C_1>0$ depending only on
$(d,u_0, M_0, J)$, we have
\begin{equation}
\max_{t\in [0,s]}\Big|A(t,x^*)\Big|
\le C_1 \left(\|U\|_{C(\overline \Omega_{*s})}+\|h_1
-h_2\|_{C([0,s])}+\|g_1-g_2\|_{C([0,s])}\right).
\label{A}
\end{equation}
It follows that
\begin{equation}
\Big|U(t^*,x^*)\Big|
\le C_1s\, e^{2(d+K(M_0))s} \left(\|U\|_{C(\overline \Omega_{*s})}+\|h_1
-h_2\|_{C([0,s])}+\|g_1-g_2\|_{C([0,s])}\right).
\label{20010}
\end{equation}
\smallskip

 \noindent
  \underline{Case 2.}   $x^*\in(h_0, H_1(s))$.

  In this case there exist $t_1^*,\, t_2^*\in (0, t^*)$ such
that $x^*=h_1(t_1^*)=h_2(t_2^*)$. Without loss of generality, we may assume that $0<t_1^*
\leq t_2^*$.  Now  we use (\ref{2008}) for $t\in [t_2^*, t^*]$, and obtain
\begin{align*}
U(t^*,x^*)=e^{-\int_{t_2^*}^{t^*}c_1(\tau,x^*)d\tau}\left[U(
t_2^*,x^*)+\int_{t_2^*}^{t^*}e^{\int_{t_2^*}^tc_1(\tau,x^*)d\tau}
A(t,x^*)dt\right].
\end{align*}
It follows that
\begin{equation}
\label{U*}
\begin{aligned}
\Big|U(t^*,x^*)\Big|&\le e^{(d+K(M_0))t^*}\left[\Big|U(t_2^*,x^*)
\Big|+\int_{t_2^*}^{t^*}e^{(d+K(M_0))t}\Big|A(t,x^*)\Big|dt\right]\\
&\le e^{(d+K(M_0))s}\Big|U(t_2^*,x^*)\Big|+s e^{2(d+K(M_0))s}\max_{t\in [0, s]}|A(t,x^*)|.
\end{aligned}
\end{equation}
Since $V_1(t_1^*,x^*)=V_2(t_2^*, x^*)=0$, we have
\begin{align*}
U(t_2^*,x^*)=V_1(t_2^*,x^*)-V_1(t_1^*,x^*)=\int_{t_1^*}^{t_2^*}(V_1)_t(t,x^*)dt,
\end{align*}
and hence from the equation satisfied by $V_1$ we obtain
\begin{align*}
\Big|U(t_2^*,x^*)\Big|&\le\int_{t_1^*}^{t_2^*}\left|d\int_{g_1
(t)}^{h_1(t)}J(x^*-y)V_1(t,y)dy
-dV_1(t,x^*)+f(t,x^*,V_1(t,x^*))\right|dt\\
&\le C_2\Big(t_2^*-t_1^*\Big), \mbox{\;\; for some $C_2>0$ depending only on $(d, M_0, f)$}.
\end{align*}
If $t_1^*=t_2^*$ then clearly $U(t_2^*, x^*)=0$. If $t_1^*<t_2^*$, then
using $\frac{h_1(t_2^*)-h_1(t_1^*)}
{t_2^*-t_1^*}\ge \mu c_0$ we obtain
\[
t_2^*-t_1^*\le\Big|h_1(t_2^*)-h_1(t_1^*)\Big|(\mu c_0)^{-1}.
\]
Since
$$
0=h_1(t_1^*)-h_2(t_2^*)=h_1(t_1^*)-h_1(t_2^*)+h_1(t_2^*)-h_2(t_2^*),
$$
we have $h_1(t_2^*)-h_1(t_1^*)=h_1(t_2^*)-h_2(t_2^*)$, and thus
$$
t_2^*-t_1^*\le\Big|h_1(t_2^*)-h_1(t_1^*)\Big|(\mu c_0)^{-1}=
\Big|h_1(t_2^*)-h_2(t_2^*)\Big|(\mu c_0)^{-1}.
$$
Therefore there exists some positive constant $C_3=C_3(\mu c_0,C_2)$
such that
$$
\Big|U(t_2^*,x^*)\Big|\le C_3\|h_1-h_2\|_{C([0,s])}.
$$
Substituting this and \eqref{A} proved in Case 1 above to \eqref{U*}, we obtain
\begin{equation}
\begin{aligned}
\Big|U(t^*,x^*)\Big|
&\le e^{(d+K(M_0))s}C_3\|h_1-h_2\|_{C( [0,s])} \\
& \hspace{.6cm} + C_1 s e^{2(d+K(M_0))s} \left(\|U\|_{C(\overline \Omega_{*s})}+\|h_1
-h_2\|_{C([0,s])}+\|g_1-g_2\|_{C([0,s])}\right).
\end{aligned}
\label{2009}
\end{equation}

\smallskip

\noindent
\underline{Case 3.}  $x^*\in [H_1(s), H_2(s))$.

Without loss of generality we assume that $h_1(s)<h_2(s)$. Then $H_1(s)=h_1(s),\; H_2(s)=h_2(s)$ and
\[
h_1(t^*)\leq h_1(s)<x^*<H_2(t^*) =h_2(t^*),
\]
\[
\mbox{$V_1(t, x^*)=0$ for $t\in [t_2^*, t^*]$,\;  $0<h_2(t^*)-h_2(t_2^*)\leq h_2(t^*)-h_1(t^*)$}.
\]
We have
\begin{align*}
0<V_2(t^*, x^*)&=\int_{t_2^*}^{t^*} \left[d\int_{g_2(t)}^{h_2(t)}J(x^*-y)V_2(t, y)dy-dV_2(t, x^*)+f(t, x^*, V_2(t, x^*))\right]dt\\
&\leq (t^*-t_2^*)\big[d+K(M_0)\big]M_0\\
&\leq \big[h_2(t^*)-h_2(t_2^*)\big](\mu c_0)^{-1}\big[d+K(M_0)\big]M_0\\
&\leq (\mu c_0)^{-1}\big[d+K(M_0)\big]M_0 \big[h_2(t^*)-h_1(t^*)\big]\\
&\leq C_4\|h_1-h_2\|_{C([0,s])},
\end{align*}
with $C_4:= (\mu c_0)^{-1}\big[d+K(M_0)\big]M_0$.

 We thus obtain
\begin{equation}
|U(t^*, x^*)|=V_2(t^*,x^*)\leq C_4\|h_1-h_2\|_{C([0,s])}.
\label{20011}
\end{equation}

The inequalities (\ref{20010}),
(\ref{2009}) and (\ref{20011}) indicate that, there exists $C_5>0$ depending only on $(\mu c_0, d, u_0, J, f)$ such that,
whether we are in Cases 1, 2 or 3, we always have
\begin{equation}
|U(t^*, x^*)|\leq C_5 \left(\|U\|_{C(\overline \Omega_{*s})}s+\|h_1
-h_2\|_{C([0,s])}+\|g_1-g_2\|_{C([0,s])}\right).
\label{20012}
\end{equation}

Analogously, we can examine the cases $x^*\in (G_2(s), -h_0)$ and $x^*\in (G_1(s), G_2(s)]$ to obtain a constant $C_6>0$ depending only on
$(\mu \tilde c_0, d, u_0, J, f)$ such that \eqref{20012} holds with $C_5$ replaced by $C_6$. Setting $C^*:=\max\big\{C_5, C_6\big\}$, we thus obtain
\[
|U(t^*, x^*)|\leq C^* \left(\|U\|_{C(\overline \Omega_{*s})}s+\|h_1
-h_2\|_{C([0,s])}+\|g_1-g_2\|_{C([0,s])}\right) \mbox{ for all } (t^*, x^*)\in\Omega_{*s}.
\]
It follows that
\[
\|U\|_{C(\overline \Omega_{*s})}\leq  C^* \left(\|U\|_{C(\overline \Omega_{*s})}s+\|h_1
-h_2\|_{C([0,s])}+\|g_1-g_2\|_{C([0,s])}\right).
\]
Let us recall that the above inequality holds for all $s\in (0, T_0]$ with $T_0$ given near the end of Step 1. Set $T_1:=\min\Big\{T_0,\; \frac{1}{2C^*}\Big\}$.
Then we easily deduce
\[
\|U\|_{C(\overline \Omega_{*s})}\leq  2C^* \left(\|h_1
-h_2\|_{C([0,s])}+\|g_1-g_2\|_{C([0,s])}\right)\; \mbox{ for } s\in (0, T_1].
\]
Substituting this inequality into (\ref{20013}) we obtain, for $s\in (0, T_1]$,
\begin{align*}
&~\|\tilde h_1-\tilde h_2\|_{C([0,s])}+\|\tilde g_1-\tilde g_2
\|_{C([0,s])}\\
\le&~ 2C_0(2C^*+1)s\left[\|h_1-h_2\|_{C([0,s])}
+\|g_1-g_2\|_{C([0,s])}\right].
\end{align*}
Thus if we define
$T_2$  by $2C_0(2C^*+1)T_2=\frac 12$, and $T^*:=\min\big\{T_1, T_2\big\}$, then
\begin{align*}
\|\tilde h_1-\tilde h_2\|_{C([0,T^*])}+\|\tilde g_1-\tilde g_2
\|_{C([0,T^*])}\le\frac 12\left[\|h_1-h_2\|_{C([0,T^*])}
+\|g_1-g_2\|_{C([0,T^*])}\right],
\end{align*}
 i.e., $\tilde\Gamma$
is a contraction mapping on $\Sigma_{T^*}$.

\medskip

\noindent
{\bf Step 3:} {\it Local existence and uniqueness.}

By Step 2 and the Contraction Mapping Theorem we know that \eqref{101} has a solution $(u, g,h)$ for $t\in (0, T^*]$. If we can show that
$(g,h)\in \Sigma_{T^*}$  holds for any solution $(u,g,h)$ of \eqref{101} defined over $t\in (0, T^*]$, then it is the unique fixed point of $\tilde \Gamma$ in $\Sigma_{T^*}$ and the uniqueness of $(u,g,h)$ follows.

So let $(u,g,h)$ be an arbitrary solution of \eqref{101} defined for $t\in (0, T^*]$. Then
\[
\left\{
\begin{aligned}
& h'(t)= \mu \int_{g(t)}^{h(t)}\int_{h(t)
}^{+\infty}J(x-y)dyu (t,x)dx,\\
& g'(t)=-\mu\int_{g(t)}^{h(t)}\int_{-\infty
}^{g(t)}J(x-y)dyu(t,x)dx.
\end{aligned}
\right.
\]
By Lemma \ref{Lemma202}, we have
\[
0<u(t,x)\leq M_0 \mbox{ for } t\in [0, T^*], \; x\in (g(t), h(t)).
\]
It follows that
\[
[h(t)-g(t)]'=\mu\int_{g(t)}^{h(t)}\left[1-\int_{g(t)}^{h(t)}J(x-y)dy\right]u(t,x)dx\leq \mu M_0[h(t)-g(t)] \mbox{ for } t\in (0, T^*].
\]
We thus obtain
\begin{equation}
\label{h-g}
h(t)-g(t)\leq 2h_0 e^{\mu M_0 t} \mbox{ for } t\in (0, T^*].
\end{equation}
Therefore if we shrink $T^*$  if necessary so that
\[
2h_0e^{\mu M_0 T^*}\leq 2h_0+\frac{\epsilon_0}{4},
\]
then
\[
h(t)-g(t)\leq 2h_0+\frac{\epsilon_0}{4} \mbox{ for } t\in [0, T^*].
\]
Moreover, the proof of \eqref{tilde-h'} and \eqref{tilde-g'} gives
\[
h'(t)\geq \mu c_0,\; g'(t)\leq -\mu \tilde c_0 \mbox{ for } t\in (0, T^*].
\]
Thus indeed $(g,h)\in\Sigma_{T^*}$, as we wanted.
This proves the local existence and uniqueness of the  solution to \eqref{101}.

\medskip

\noindent
{\bf Step 4:} {\it Global existence and uniqueness.}

By Step 3, we see the \eqref{101} has a unique solution $(u,g,h)$  for some initial time interval $(0, T)$, and for
any $s\in (0, T)$, $u(s,x)>0$ for $x\in (g(s), h(s))$ and $u(s,\cdot)$ is continuous over $[g(s), h(s)]$.
This implies that we can treat $u(s,\cdot)$ as an initial function and use Step 3 to extend the solution from $t=s$ to some $T'\geq T$.
Suppose $(0, \hat T)$ is the maximal interval that the solution $(u,g,h)$ of \eqref{101} can be defined through this extension process.
We show that $\hat T=\infty$. Otherwise $\hat T\in (0, \infty)$ and we are going to derive a contradiction.

Firstly we notice that \eqref{h-g} now holds for $t\in (0, \hat T)$. Since $h(t)$ and $g(t)$ are monotone functions over $[0, \hat T)$,
we may define
\[
h(\hat T):=\lim_{t\to\hat T} h(t),\; g(\hat T):=\lim_{t\to\hat T} g(t) \; \mbox{ with } h({\hat T})-g(\hat T)\leq 2h_0e^{\mu M_0 \hat T}.
\]
The third and fourth equations in \eqref{101}, together with $0\leq u\leq M_0$ indicate that $h'$ and $g'$ belong to $L^\infty([0, \hat T))$ and
hence with $g(\hat T)$ and $h(\hat T)$ defined as above, $g, h\in C([0,\hat T])$. It also follows that the right-hand side of the first equation in \eqref{101}
belongs to $L^\infty(\Omega_{\hat T})$, where $\Omega_{\hat T}:=\big\{(t,x): t\in [0, \hat T],\; g(t)< x<h(t)\big\}$.
It follows that $u_t\in L^\infty(\Omega_{\hat T})$. Thus for each $x\in (g(\hat T), h(\hat T))$,
\[
u(\hat T,x):=\lim_{t\nearrow \hat T}u(t,x) \mbox{ exists},
\]
and $u(\cdot, x)$ is continuous at $t=\hat T$. We may now view $u(t,x)$ as the unique solution of the ODE problem in Step 1 of the proof of Lemma 2.3
(with $\phi=u$),
which is defined over $[t_x, \hat T]$. Since $t_x$, $J(x-y)$ and $f(t,x,u)$ are all continuous in $x$, by the continuous dependence of the ODE solution to
the initial function and the parameters in the equation, we see that $u(t,x)$ is continuous in $\Omega_{\hat T}$.
By assumption, $u\in C(\overline \Omega_s)$ for any $s\in (0, \hat T)$. To show this also holds with $s=\hat T$, it remains to show that
$u(t,x)\to 0$ as $(t,x)\to (\hat T, g(\hat T))$ and as $(t,x)\to (\hat T, h(\hat T))$ from $\Omega_{\hat T}$. We only prove the former as the other case can be shown similarly.
We note that as $x\searrow g(\hat T)$, we have $t_x\nearrow \hat T$, and so
\begin{align*}
|u(t,x)|&=\left|\int_{t_x}^t \left[ d\int_{g(t)}^{h(t)}J(x-y)u(\tau, y)dy-d u(\tau, x)+f(\tau, x, u(\tau,x))\right]d\tau\right|\\
&\leq (t-t_x)\big[2d+K(M_0)\big]M_0\\
&\to 0 \mbox{ as } \Omega_{\hat T} \ni (t,x)\to (\hat T, g(\hat T)).
\end{align*}

Thus we have shown that $u\in C(\overline \Omega_{\hat T})$ and $(u,g,h)$ satisfies \eqref{101} for $t\in (0, \hat T]$. By Lemma 2.2 we have
$u(\hat T, x)>0$ for $x\in (g(\hat T), h(\hat T))$. Thus we can regard $u(\hat T, \cdot)$ as an initial function and apply Step 3 to conclude that
the solution of \eqref{101} can be extended to some $(0, \tilde T)$ with $\tilde T>\hat T$. This contradicts the definition of $\hat T$. Therefore we must have $\hat T=\infty$.
\end{proof}

\section{Long-time behavior of \eqref{101}: Spreading-vanishing dichotomy}
\subsection{Some preparatory results}
\subsubsection{\underline{Comparison Principles}}
\begin{theorem}\label{thm-CP}
$($Comparison principle$)$ Assume that \textbf{(J)} and  \textbf{(f1)-(f2)} hold, and $u_0$ satisfies \eqref{102}.
For $T\in(0,+\infty)$, suppose that $\overline h,\overline
g\in C([0,T])$ and
$\overline u\in  C\left(\overline \Omega_{\overline g, \overline h, T}\right)$ satisfies
\begin{equation}\label{CP-upper}
\left\{
\begin{aligned}
&\overline u_t\ge d\int_{\overline g(t)}^{\overline h(t)}J(x-y)
\overline u(t,y)dy-d\overline u+f(t,x,\overline u) & &0<t\le T,~x\in(\overline g(t),\overline h(t)),\\
&\overline u(t,\overline g(t))\geq 0, \ \overline u(t,\overline h(t))\geq 0 & &0<t\le T,\\
&\overline h'(t)\ge\mu\int_{\overline g(t)}^{\overline h(t)}\int_{\overline h(t)}^{+\infty}J(x-y)\overline u(t,x)dydx  & &0<t\le T,\\
&\overline g'(t)\le-\mu\int_{\overline g(t)}^{\overline h(t)}\int_{-\infty}^{\overline g(t)}J(x-y)\overline u(t,x)dydx  & &0<t\le T,\\
&\overline u(0,x)\ge u_0(x),~\overline h(0)\ge h_0,~\overline g(0)\le-h_0  & & x\in[-h_0,h_0].
\end{aligned}
\right.
\end{equation}
Then the unique positive solution $(u,g,h)$ of problem
\eqref{101} satisfies
\begin{equation}
u(t,x)\leq\overline u(t,x),~g(t)\geq\overline g(t)~\text{ and
}~h(t)\leq\overline h(t)~\text{ for }~0<t\leq T~\text{ and }~x
\in\mathbb{R}.
\label{CP-compare}
\end{equation}
\end{theorem}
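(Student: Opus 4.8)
The plan is to combine the Maximum Principle (Lemma~\ref{lemma-MP}) with a ``first touching time'' analysis of the free boundaries. Two preliminary reductions are convenient. First, applying Lemma~\ref{lemma-MP} to $\overline u$ itself — writing $f(t,x,\overline u)=\overline c(t,x)\overline u$ with $\overline c\in L^\infty$, which is legitimate by \textbf{(f1)} since $\overline\Omega_{\overline g,\overline h,T}$ is compact and $\overline u$ is bounded there — shows $\overline u\ge 0$ throughout its domain. Second, it suffices to prove the theorem with $(u_0,h_0)$ replaced by $(\hat u_0,\hat h_0)$ for any $\hat h_0<h_0$ and any continuous $\hat u_0$ with $0\le\hat u_0<u_0$ on $(-\hat h_0,\hat h_0)$ and $\hat u_0\equiv0$ off $[-\hat h_0,\hat h_0]$: letting $(\hat u_0,\hat h_0)\to(u_0,h_0)$ and invoking continuous dependence of the solution of \eqref{101} on its initial data recovers the general statement. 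After this reduction the initial inequalities are strict, i.e. $\overline h(0)>h_0$, $\overline g(0)<-h_0$ and $\overline u(0,\cdot)-u_0\not\equiv0$ on $[-h_0,h_0]$; I keep the notation $(u,g,h)$ and $(u_0,h_0)$.

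Next set $t^*:=\sup\{t\in[0,T]:\ g\ge\overline g\ \text{and}\ h\le\overline h\ \text{on}\ [0,t]\}$; by the strict initial inequalities and continuity $t^*>0$, and $(g(t),h(t))\subseteq(\overline g(t),\overline h(t))$ for $t\in[0,t^*]$. On the region $\{0<t\le t^*,\ g(t)<x<h(t)\}$ the difference $w:=\overline u-u$ satisfies
\[
w_t\ge d\int_{g(t)}^{h(t)}J(x-y)w(t,y)\,dy-dw+c(t,x)w,
\]
because enlarging the integration interval to $(\overline g(t),\overline h(t))$ only increases the nonlocal term (as $\overline u\ge0$), and $c:=[f(\cdot,\overline u)-f(\cdot,u)]/(\overline u-u)\in L^\infty$ by \textbf{(f1)} and the boundedness of $u$ and $\overline u$. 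Also $w\ge0$ on the parabolic boundary of this region: on $x=g(t)$ and $x=h(t)$ since $u$ vanishes there while $\overline u\ge0$, and on $t=0$ by hypothesis. Lemma~\ref{lemma-MP} then gives $u\le\overline u$ on $\{0\le t\le t^*,\ g(t)\le x\le h(t)\}$, and, since $w(0,\cdot)\not\equiv0$, its strong form gives $w>0$ on $\{0<t\le t^*,\ g(t)<x<h(t)\}$.

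It remains to prove $t^*=T$. If not, then $h(t^*)=\overline h(t^*)=:H_*$ or $g(t^*)=\overline g(t^*)$; by symmetry treat only the former. Since $h\le\overline h$ on $[0,t^*]$ with equality at $t^*$, the relation $\overline h(t^*)-\overline h(t)=h(t^*)-\overline h(t)\le h(t^*)-h(t)$ divided by $t^*-t>0$ gives, as $t\uparrow t^*$, $\overline h'(t^*)\le h'(t^*)$. Conversely, using $\overline g(t^*)\le g(t^*)$, $\overline u(t^*,\cdot)\ge u(t^*,\cdot)$ on $(g(t^*),H_*)$, and $\overline u(t^*,\cdot)\ge0$ on $(\overline g(t^*),g(t^*))$, the free boundary inequality for $\overline h$ yields
\[
\overline h'(t^*)\ge\mu\int_{g(t^*)}^{H_*}\int_{H_*}^{\infty}J(x-y)\overline u(t^*,x)\,dy\,dx\ge\mu\int_{g(t^*)}^{H_*}\int_{H_*}^{\infty}J(x-y)u(t^*,x)\,dy\,dx=h'(t^*).
\]
Since $J(0)>0$ and $J$ is continuous, $J>0$ on some $(-\eta,\eta)$, so points $x$ just below $H_*$ ``see'' points $y$ just above $H_*$; as $w(t^*,\cdot)>0$ on the whole slice, the first inequality above is strict, giving $\overline h'(t^*)>h'(t^*)$, a contradiction. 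Hence $t^*=T$, which together with the previous paragraph is exactly \eqref{CP-compare}.

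The heart of the matter — and the main obstacle — is this last strictness step: in the nonlocal setting there is no Hopf boundary lemma, so the strict inequality $\overline h'(t^*)>h'(t^*)$ must be extracted from the strong maximum principle of Lemma~\ref{lemma-MP} together with the positivity of the kernel $J$ near the origin. This is precisely why the preliminary perturbation of the initial data — making $\overline u(0,\cdot)\not\equiv u_0$ and the boundary inequalities strict, so that $t^*>0$ and the strong maximum principle applies — is built into the argument.
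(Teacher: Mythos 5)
Your proposal is correct and follows essentially the same strategy as the paper's proof: approximate the initial data from below to make the inequalities strict, run a first-touching-time argument in which Lemma \ref{lemma-MP} applied to $\overline u-u$ on the smaller domain gives strict ordering, derive a contradiction at the touching time from the free boundary conditions together with $J(0)>0$, and recover the general case by continuous dependence. The only notable (and harmless) difference is that the paper additionally shrinks $\mu$ to $\mu(1-\epsilon)$ to reinforce the strict inequality at the touching time, whereas you obtain the needed strictness solely from the strong maximum principle plus the positivity of $J$ near $0$ (an ingredient the paper also uses); note only that the strict inequality in your displayed chain is the second one, comparing the $\overline u$- and $u$-integrals, not the first.
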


The triplet $(\overline u,\overline g,\overline h)$  above
is called an upper solution of \eqref{101}. We can define a lower
solution and obtain analogous results by reversing all the inequalities
in (\ref{CP-upper}) and (\ref{CP-compare}).

\begin{proof}
First of all,   thanks to (\ref{102}) and Lemma \ref{lemma-MP}, one sees that $\overline u>0$ for $0<t\leq T,~\overline g(t)<x<\overline h(t)$,  and thus  both $\overline h$ and $-\overline g$ are strictly increasing.

For small $\epsilon>0$, let $(u_\epsilon,g_\epsilon,h_\epsilon)$
denote the unique solution of (\ref{101}) with $h_0$ replaced by $h_0^\epsilon:=h_0(1-\epsilon)$, $\mu$ replaced by $\mu_\epsilon
=\mu(1-\epsilon)$, and $u_0$ replaced by $u_0^\epsilon\in C([-
h_0^\epsilon,h_0^\epsilon])$ which satisfies $0\le u_0^\epsilon
(x)< u_0(x)$ in $[-h_0^\epsilon,h_0^\epsilon]$ and $u_0^\epsilon
\left(\frac{h_0}{h_0^\epsilon}x\right)\rightarrow u_0(x)$ as $\epsilon\rightarrow0$ in the  $C([-h_0,h_0])$ norm.

We claim that {\it $h_\epsilon(t)<\overline h(t)$ and $g_\epsilon(t)
>\overline g(t)$ for all $t\in(0,T]$.} Clearly, these hold true for
small $t>0$. Suppose that there exists
$t_1\le T$ such that
$$
h_\epsilon(t)<\bar h(t),\ g_\epsilon(t) >\overline g(t)\  \textrm{for}\  t\in(0,t_1)\  \textrm{and} \ [h_\epsilon(t_1) -\overline h(t_1)][  g_\epsilon(t_1)- \overline g(t_1)]=0.
$$
Without loss of generality, we may assume that
\[
 h_\epsilon(t_1) =\overline h(t_1) \mbox{ and }  g_\epsilon(t_1)\geq  \overline g(t_1).
 \]

We now compare $u_\epsilon$ and $\overline u$ over the region
$$
\Omega_{\epsilon, t_1}:=\left\{(t,x)\in\mathbb{R}^2:0<t\leq t_1,
~g_\epsilon(t)<x<h_\epsilon(t)\right\}.
$$
Let  $w(t,x)=e^{k_1 t}\left(\overline u-u_\epsilon\right)$, where
$k_1>0$ is a constant to be determined later. Then for all $(t,x)
\in\Omega_{\epsilon, t_1}$, there is
\begin{equation}
w_t\ge d\int_{g_\epsilon(t)}^{h_\epsilon(t)}J(x-y)w(t,y)dy
+\left[k_1-d+C(t,x)\right]w(t,x),
\label{304}
\end{equation}
for some  $L^\infty$ function $C(t, x)$.
Choosing $k_1$ large such that $p_1(t,x):=k_1-d+C(t,x)>0$
for all $(t,x)\in\Omega_{\epsilon, t_1}$.
By Lemma \ref{lemma-MP},  it follows that $\overline u-u_\epsilon>0$ in  $\Omega_{\epsilon, t_1}$.

Furthermore, according to the definition of $t_1$, we have   $h_\epsilon'(t_1) \ge\overline h'(t_1)$. Thus
\begin{eqnarray*}
0 &\ge & \overline h'(t_1)-h_\epsilon'(t_1)\\
  & \geq & \mu\int_{\overline g(t_1)
}^{\overline h(t_1)}\int_{\overline h(t_1)}^{+\infty}J(x-y)
 \overline u(t_1,x) dydx - \mu_{\epsilon}\int_{  g_{\epsilon}(t_1)
}^{  h_{\epsilon}(t_1)}\int_{ h_{\epsilon}(t_1)}^{+\infty}J(x-y)
  u_{\epsilon}(t_1,x) dydx\\
  &>& \mu_{\epsilon}\int_{  g_{\epsilon}(t_1)
}^{  h_{\epsilon}(t_1)}\int_{ h_{\epsilon}(t_1)}^{+\infty}J(x-y)
 \big[\overline u(t_1,x)- u_{\epsilon}(t_1,x)\big] dydx>0,
\end{eqnarray*}
which is a contradiction.  The claim is thus proved, i.e., we always have $h_\epsilon(t)<\overline h(t)$ and $g_\epsilon(t)
>\overline g(t)$ for all $t\in(0,T]$. Then the above arguments yield that $\overline u(t,x)>u_\epsilon(t,x)$  in $\Omega_{\epsilon, T}$.

Since the unique solution of
(\ref{101}) depends continuously on the parameters
  in  (\ref{101}), the desired result
then follows by letting $\epsilon\rightarrow0$.
\end{proof}

The following result is a direct consequence of the comparison
principle, where to stress the dependence on the parameter $\mu$, we use $\left(u^\mu,g^\mu,
h^\mu\right)$ to denote the solution of problem (\ref{101}).

\begin{corollary}\label{corollary-mu-increasing} Assume that \textbf{(J)} and  \textbf{(f1)-(f2)} hold, and $u_0$ satisfies \eqref{102}.
   If $\mu_1\le\mu_2$, we have $h^{\mu_1}
(t)\le h^{\mu_2}(t)$, $g^{\mu_1}(t)\ge g^{\mu_2}(t)$ for
$t>0$, and $u^{\mu_1}(t,x)\le u^{\mu_2}(t,x)$ for $t>0$
and $g^{\mu_1}(t)<x<h^{\mu_1}(t)$.
\end{corollary}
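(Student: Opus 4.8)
The plan is to derive Corollary \ref{corollary-mu-increasing} directly from the Comparison Principle, Theorem \ref{thm-CP}, by checking that the solution associated with the larger parameter $\mu_2$ serves as an upper solution for the problem with parameter $\mu_1$. Concretely, suppose $\mu_1\le\mu_2$ and let $(u^{\mu_1},g^{\mu_1},h^{\mu_1})$ and $(u^{\mu_2},g^{\mu_2},h^{\mu_2})$ be the corresponding unique solutions of \eqref{101} (with the same $d$, $J$, $f$, $h_0$ and $u_0$). I would fix an arbitrary $T\in(0,\infty)$ and verify that the triple $(\overline u,\overline g,\overline h):=(u^{\mu_2},g^{\mu_2},h^{\mu_2})$ satisfies the system of inequalities \eqref{CP-upper} with $\mu$ there taken to be $\mu_1$. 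The first two lines of \eqref{CP-upper} and the initial condition line hold with equality, since $(u^{\mu_2},g^{\mu_2},h^{\mu_2})$ solves \eqref{101} exactly and the initial data agree.

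The only point requiring an actual estimate is the pair of free boundary inequalities. Using that $(u^{\mu_2},g^{\mu_2},h^{\mu_2})$ solves \eqref{101} with parameter $\mu_2$, and that by Lemma \ref{Lemma202} we have $u^{\mu_2}>0$ in its support (so both double integrals over the kernel are nonnegative), I get
\[
\overline h'(t)=\mu_2\int_{\overline g(t)}^{\overline h(t)}\int_{\overline h(t)}^{+\infty}J(x-y)\overline u(t,x)\,dy\,dx\;\ge\;\mu_1\int_{\overline g(t)}^{\overline h(t)}\int_{\overline h(t)}^{+\infty}J(x-y)\overline u(t,x)\,dy\,dx,
\]
and similarly $\overline g'(t)\le-\mu_1\int_{\overline g(t)}^{\overline h(t)}\int_{-\infty}^{\overline g(t)}J(x-y)\overline u(t,x)\,dy\,dx$, because $\mu_2\ge\mu_1$ and the integrals are nonnegative. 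Hence all hypotheses of Theorem \ref{thm-CP} are met with the reference problem being \eqref{101} for parameter $\mu_1$, and the conclusion \eqref{CP-compare} gives exactly $u^{\mu_1}(t,x)\le u^{\mu_2}(t,x)$, $g^{\mu_1}(t)\ge g^{\mu_2}(t)$ and $h^{\mu_1}(t)\le h^{\mu_2}(t)$ for $0<t\le T$; since $T$ was arbitrary, these hold for all $t>0$.

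I do not anticipate a genuine obstacle here: the statement is a routine corollary, and the only thing to be careful about is the monotonicity direction of $\mu$ in the Stefan-type conditions and the sign of the kernel integrals (which is handled by the strict positivity of $u^{\mu_2}$ from Lemma \ref{Lemma202}, guaranteeing the integrals are $\ge0$). One small bookkeeping point worth noting is that Theorem \ref{thm-CP} is stated for an upper solution of problem \eqref{101} with a fixed set of parameters, so I would phrase the argument as applying the comparison principle to \eqref{101} with parameter $\mu_1$, treating the $\mu_2$-solution as the competitor $(\overline u,\overline g,\overline h)$; no modification of the proof of Theorem \ref{thm-CP} is needed. The restriction on $x$ in the statement for $u$, namely $g^{\mu_1}(t)<x<h^{\mu_1}(t)$, is automatic since outside that interval $u^{\mu_1}\equiv0\le u^{\mu_2}$ anyway.
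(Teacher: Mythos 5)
Your argument is correct and is exactly what the paper intends: it states the corollary as a direct consequence of the comparison principle (Theorem \ref{thm-CP}), and your verification that $(u^{\mu_2},g^{\mu_2},h^{\mu_2})$ is an upper solution for the $\mu_1$-problem, using $\mu_2\ge\mu_1$ and the nonnegativity of the kernel integrals, is the standard way to spell that out. No gaps.
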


{\begin{lemma}\label{lemma-MP2}
Assume that {\rm \bf (J)}   holds, and  $h_0, T>0$.
Suppose that
 $u(t,x)$ as well as $u_t(t,x)$ are
continuous in $\Omega_0:=[0, T]\times [-h_0, h_0]$, and  for some $c\in L^\infty (\Omega_0)$,
\begin{equation}\label{lemma-MP2-u}
\left\{
\begin{aligned}
&u_t(t,x)\ge d\int_{-h_0}^{h_0}J(x-y)u(t,y)dy-du +c(t,x)u, && t\in (0, T],\  x\in [-h_0, h_0],\\
&u(0,x)\ge0,  && x\in [-h_0,  h_0].
\end{aligned}
\right.
\end{equation}
Then $u(t,x)\ge0$ for all $0\le t\le T$ and $x\in[-h_0, h_0]$.
Moreover, if $u(0,x)\not\equiv0$ in $[-h_0, h_0]$, then $u(t,x)>0$ in $(0, T]\times [-h_0, h_0]$.
\end{lemma}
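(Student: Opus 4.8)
The plan is to mimic the proof of Lemma \ref{lemma-MP}, the argument being in fact simpler here because the spatial domain $[-h_0,h_0]$ is fixed, so the ``time-backtracking'' point $t_0$ used there always collapses to $0$. First I would absorb the zeroth order term by setting $w(t,x):=e^{kt}u(t,x)$, choosing $k>0$ large enough (possible since $c\in L^\infty(\Omega_0)$) so that $p(t,x):=k-d+c(t,x)>0$ on $\Omega_0$; then $w$ and $w_t$ are continuous on $\Omega_0$, $w(0,\cdot)\ge0$, and
\[
w_t(t,x)\ge d\int_{-h_0}^{h_0}J(x-y)w(t,y)\,dy+p(t,x)w(t,x),\qquad t\in(0,T],\ x\in[-h_0,h_0].
\]
It then suffices to prove $w\ge0$ on $\Omega_0$, and $w>0$ on $(0,T]\times[-h_0,h_0]$ whenever $u(0,\cdot)\not\equiv0$.

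For $w\ge0$, put $p_0:=\sup_{\Omega_0}p$ and $T_*:=\min\{T,[2(d+p_0)]^{-1}\}$, and argue by contradiction. If $w_{\inf}:=\min_{[0,T_*]\times[-h_0,h_0]}w<0$, this minimum is attained at some $(t_*,x_*)$ with $t_*>0$ (since $w(0,\cdot)\ge0$). Integrating the differential inequality in $t$ from $0$ to $t_*$ at $x=x_*$, and using $w\ge w_{\inf}$, $w_{\inf}<0$, $\int_{-h_0}^{h_0}J(x_*-y)\,dy\le\int_{\mathbb R}J=1$, $p(\cdot,x_*)\le p_0$ and $w(0,x_*)\ge0$, I obtain
\[
w_{\inf}=w(t_*,x_*)\ge (d+p_0)\,t_*\,w_{\inf}\ge (d+p_0)T_*\,w_{\inf}\ge\tfrac12 w_{\inf},
\]
which is absurd. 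Hence $w\ge0$ on $[0,T_*]\times[-h_0,h_0]$; since the choice of $T_*$ depends only on $d$ and $p_0$, repeating the argument on $[T_*,2T_*],\,[2T_*,3T_*],\dots$ yields $w\ge0$ on $\Omega_0$ after finitely many steps.

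For strict positivity, suppose $u(0,\cdot)\not\equiv0$ but $w(t^*,x^*)=0$ for some $t^*\in(0,T]$, $x^*\in[-h_0,h_0]$. I first claim $w(t^*,\cdot)\equiv0$ on $[-h_0,h_0]$: otherwise, as $w(t^*,\cdot)\ge0$ is continuous, there is a boundary point $\tilde x$ of $\{x:w(t^*,x)>0\}$, where $w(t^*,\tilde x)=0$ forces $w_t(t^*,\tilde x)\le0$ (a minimum of $t\mapsto w(t,\tilde x)$; if $t^*=T$ use the continuity of $w_t$ up to $t=T$), while the differential inequality gives $w_t(t^*,\tilde x)\ge d\int_{-h_0}^{h_0}J(\tilde x-y)w(t^*,y)\,dy>0$ because $w(t^*,\cdot)>0$ near $\tilde x$ and $J(0)>0$ with $J$ continuous — a contradiction. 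With $w(t^*,\cdot)\equiv0$ in hand, integrating the inequality from $0$ to $t^*$ at any fixed $x$ gives $-w(0,x)\ge0$, i.e. $u(0,x)\le0$ for all $x$, contradicting $u(0,\cdot)\not\equiv0$.

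I do not expect a genuine obstacle, since this is a direct adaptation of Lemma \ref{lemma-MP}. The only points requiring minor care are the sign bookkeeping in the integral estimates (exploiting $J\ge0$ with total mass $1$, so its integral over the truncated interval $[-h_0,h_0]$ is at most $1$) and the one-sided time-derivative inequality when the critical time is the endpoint $t_*=T_*$ or $t^*=T$, which is legitimate by the assumed continuity of $u_t$ on the closed set $\Omega_0$.
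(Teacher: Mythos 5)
Your proposal is correct and is precisely the adaptation the paper intends: the paper omits the proof of this lemma, stating only that it follows by the arguments of Lemma \ref{lemma-MP} in a simpler setting, and your write-up reproduces that argument (exponential shift $w=e^{kt}u$, contradiction at an interior-in-time minimum on a short time interval of length depending only on $d$ and $p_0$, iteration, and the boundary-of-positivity-set argument plus integration from $t=0$ for strict positivity) with the fixed domain making the backtracking time $t_0$ collapse to $0$, exactly as you note.
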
}
\begin{proof}
This result is well known, and can be proved by  the arguments in the proof of Lemma \ref{lemma-MP}; the situation here is actually much simpler.
We omit the details.
\end{proof}

\subsubsection{\underline{Some related eigenvalue problems}}
Here we recall and prove some results on the principal eigenvalue of the linear
 operator $\mathcal{L}_{\Omega}+a: C(\overline
\Omega)\mapsto C(\overline\Omega)$ defined by
$$
\left(\mathcal{L}_{\Omega}+a\right)[\,\phi\,] (x):= d \left[\int_{\Omega}J(x-y)
\phi(y)dy -  \phi(x)\right]+a(x)\phi(x),
$$
where $\Omega$ is an open interval in $\mathbb{R}$, possibly unbounded,
$a\in C(\overline \Omega)$ and $J$ satisfies \textbf{(J)}.

Define
$$
\lambda_p  (\mathcal{L}_{\Omega}+a):=  \inf\ \Big\{  \lambda\in\mathbb R  :  (\mathcal{L}_{\Omega}+a)[\,\phi\,]\leq \lambda \phi \mbox{ in } \Omega \mbox{ for some }
\phi\in C(\overline\Omega), \phi>0 \Big\}.
$$
As usual, if $\lambda_p  (\mathcal{L}_{\Omega}+a(x))$ is an eigenvalue of the  operator $\mathcal{L}_{\Omega}+a$ with a continuous and positive eigenfunction, we call it
a {\it principal eigenvalue}.

In this paper, we are particularly interested in the properties of $\lambda_p(\mathcal{L}_{(\ell_1, \ell_2)}+  a_0)$, with $a_0$  a positive constant and $-\infty\leq \ell_1<  \ell_2\leq +\infty$. In this special case, it is well known (see, e.g., \cite{BerestyckiJFA2016, CovilleJDE2010, LiFangDCDS2017}) that $\lambda_p(\mathcal{L}_{(\ell_1, \ell_2)}+  a_0)$ is a principal eigenvalue. Moreover, we show that the following conclusions hold.

\begin{proposition}\label{prop-EV}
Assume that the kernel $J$ satisfies  \textbf{(J)}, $a_0$ is a positive constant and $-\infty<\ell_1 < \ell_2<+\infty$. Then the following hold true:
\begin{itemize}
\item[(i)] $\lambda_p(\mathcal{L}_{(\ell_1, \ell_2)}+  a_0)$ is strictly increasing and continuous in $\ell:=\ell_2-\ell_1$,
\item[(ii)]  $\lim_{\ell_2-\ell_1\to+\infty}\lambda_p(\mathcal{L}_{(\ell_1, \ell_2)}+  a_0)=a_0$,
\item[(iii)] $\lim_{\ell_2-\ell_1\to 0}\lambda_p(\mathcal{L}_{(\ell_1, \ell_2)}+  a_0)=a_0-d$.
\end{itemize}
\end{proposition}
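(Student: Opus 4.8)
The plan is to establish the three assertions about $\lambda_p(\mathcal{L}_{(\ell_1,\ell_2)}+a_0)$ by exploiting translation invariance and a variational (or sup-inf) characterization of $\lambda_p$. First, note that since $J$ is symmetric and $a_0$ is constant, $\lambda_p(\mathcal{L}_{(\ell_1,\ell_2)}+a_0)$ depends only on $\ell=\ell_2-\ell_1$; so we may write it as $\lambda_p(\ell)$ and work on the symmetric interval $(-\ell/2,\ell/2)$. For part (i), I would first prove monotonicity by a comparison/test-function argument: given $\ell<\ell'$, take the principal eigenfunction $\phi_{\ell'}>0$ on $(-\ell'/2,\ell'/2)$ and restrict it to the smaller interval; since the kernel integral $\int_{-\ell/2}^{\ell/2}J(x-y)\phi_{\ell'}(y)\,dy \le \int_{-\ell'/2}^{\ell'/2}J(x-y)\phi_{\ell'}(y)\,dy$ (as $\phi_{\ell'}>0$ and $J\ge0$, and strictly less somewhere), the restricted function is a strict positive supersolution, forcing $\lambda_p(\ell)<\lambda_p(\ell')$ via the definition of $\lambda_p$ as an infimum. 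Continuity in $\ell$ should follow from continuous dependence of the operator on the domain together with the characterization of $\lambda_p$; one can bound $|\lambda_p(\ell)-\lambda_p(\ell')|$ by estimating how much the eigenfunction (suitably extended/rescaled) moves, or appeal directly to the known continuity results cited (\cite{BerestyckiJFA2016, CovilleJDE2010, LiFangDCDS2017}).

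For part (ii), the key observation is that $\lambda_p(\ell)<a_0$ always (take $\phi\equiv1$: then $(\mathcal{L}_{(\ell_1,\ell_2)}+a_0)[1](x)=d[\int_{\ell_1}^{\ell_2}J(x-y)\,dy-1]+a_0\le a_0$, giving the upper bound $\lambda_p(\ell)\le a_0$; strictness follows from part (i) monotonicity or a sharper test). Combined with monotonicity, $\lim_{\ell\to\infty}\lambda_p(\ell)=:\lambda_\infty\le a_0$ exists; to show equality, I would use the principal eigenfunction $\phi_\ell$ normalized by $\max\phi_\ell=1$ attained at some point $x_\ell$, translate so $x_\ell=0$, and show that along a subsequence $\phi_\ell$ converges locally uniformly to a positive bounded function $\phi_\infty$ on $\R$ solving $d(J*\phi_\infty-\phi_\infty)+a_0\phi_\infty=\lambda_\infty\phi_\infty$; evaluating at the maximum point $0$ of $\phi_\infty$ gives $d(J*\phi_\infty(0)-1)+a_0=\lambda_\infty$, and since $J*\phi_\infty(0)\le1$ with the inequality forced toward equality (otherwise $\phi_\infty$ would strictly exceed its max somewhere — careful argument using $J(0)>0$ and continuity), one deduces $\lambda_\infty\ge a_0$, hence $\lambda_\infty=a_0$. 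The compactness needed for the limit comes from a Harnack-type / equicontinuity estimate for $\phi_\ell$, which follows from the eigenvalue equation rewritten as $\phi_\ell(x)=\frac{d\int J(x-y)\phi_\ell(y)\,dy}{d+\lambda_p(\ell)-a_0}$ together with $\sup_\R J<\infty$; note the denominator stays bounded away from $0$ since $\lambda_p(\ell)-a_0\to0^-$ is small.

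For part (iii), as $\ell\to0$ the nonlocal term collapses: for any $\phi\in C([-\ell/2,\ell/2])$ with $\phi>0$, $\int_{-\ell/2}^{\ell/2}J(x-y)\phi(y)\,dy \le \|J\|_\infty\,\ell\,\max\phi\to0$ uniformly, so $(\mathcal{L}_{(\ell_1,\ell_2)}+a_0)[\phi]\approx(a_0-d)\phi$. More precisely, testing with $\phi\equiv1$ gives $(\mathcal{L}+a_0)[1](x)=(a_0-d)+d\int_{\ell_1}^{\ell_2}J(x-y)\,dy\le (a_0-d)+d\|J\|_\infty\ell$, whence $\lambda_p(\ell)\le a_0-d+d\|J\|_\infty\ell$; for the reverse inequality, if $\phi>0$ satisfies $(\mathcal{L}+a_0)[\phi]\le\lambda\phi$, then at a point $x_0$ where $\phi$ is near its minimum we get $\lambda\ge a_0-d+d\int_{\ell_1}^{\ell_2}J(x_0-y)\frac{\phi(y)}{\phi(x_0)}\,dy \ge a_0-d$, so $\lambda_p(\ell)\ge a_0-d$. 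Letting $\ell\to0$ sandwiches the limit at $a_0-d$.

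The main obstacle I anticipate is part (ii): producing the limiting eigenfunction $\phi_\infty$ rigorously and arguing that the "defect" $d(1-J*\phi_\infty(0))$ vanishes at the maximum point. The subtlety is that $\phi_\ell$ lives on expanding but finite intervals with zero "boundary behavior" only in the weak sense appropriate to nonlocal operators (there is no pointwise boundary condition), so one must be careful that the normalized eigenfunctions do not degenerate (mass escaping to infinity or concentrating). The uniform Harnack-type estimate and the uniform lower bound on the denominator $d+\lambda_p(\ell)-a_0$ (which holds precisely because we already know $\lambda_p(\ell)\to\lambda_\infty\ge a_0-d>a_0-d-\epsilon$... actually we need $\lambda_\infty$ not too negative, which follows from part (i) and (iii) giving $\lambda_p(\ell)>a_0-d$ for all $\ell$, so the denominator exceeds $d-(d)=0$ — one should instead note $\lambda_p(\ell)$ is increasing so bounded below by, say, $\lambda_p(\ell_0)>a_0-d$ for any fixed $\ell_0$) are what make the compactness argument go through. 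An alternative, cleaner route for (ii) is to compare from below with the eigenvalue on a large torus or to use an explicit test function (e.g. a smooth bump) to show $\lambda_p(\ell)\ge a_0-\epsilon(\ell)$ with $\epsilon(\ell)\to0$; I would present whichever is shortest given the tools already available in the cited references.
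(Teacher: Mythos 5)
Your parts (iii) and the upper bound in (ii) are correct (and in fact simpler than the paper's Cauchy--Schwarz estimates), but two steps in your plan have genuine gaps. First, strict monotonicity in (i): restricting the principal eigenfunction $\phi_{\ell'}$ of the larger interval to the smaller one gives a positive function with $(\mathcal{L}_{(-\ell/2,\ell/2)}+a_0)[\phi_{\ell'}]\le \lambda_p(\ell')\phi_{\ell'}$, and the infimum definition then yields only $\lambda_p(\ell)\le\lambda_p(\ell')$. The strict inequality holds only near the endpoints (if $J$ has small support the two kernel integrals coincide in the interior), so you cannot lower $\lambda_p(\ell')$ by any $\epsilon$ and the definition alone does not ``force'' $\lambda_p(\ell)<\lambda_p(\ell')$. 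An extra argument is needed: e.g.\ integrate the strict supersolution inequality against the principal eigenfunction of the smaller interval and use the symmetry of $J$, or do what the paper does — extend the eigenfunction $\phi$ of the \emph{smaller} interval by $\tilde\phi(x)=\frac d{c_0}\int_0^\ell J(x-y)\phi(y)\,dy$ with $c_0=\lambda_p+d-a_0>0$ and insert it into the variational (Rayleigh-quotient) characterization on the larger interval, which produces a strict increase. Likewise your continuity claim is only asserted; the paper has to prove right-continuity by hand (extend $\phi_0$ by the constant $\phi_0(\ell_0)$ and exhibit a $(\lambda_p^0+\epsilon)$-supersolution on the slightly larger interval), quoting \cite{BerestyckiJFA2016} only for the left limit, so this half needs an actual argument or a precise citation.

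Second, your primary route for the lower bound in (ii) does not go through as described. If the maximizers $x_\ell$ stay at bounded distance from the endpoints, the translated limit problem lives on a half-line rather than on $\R$; more importantly, evaluating the limit equation at the maximum point only gives $d\int J(-y)\phi_\infty(y)\,dy=d+\lambda_\infty-a_0\le d$, i.e.\ $\lambda_\infty\le a_0$ — the inequality you already have. Excluding $\lambda_\infty<a_0$ is a genuine Liouville-type statement for bounded positive solutions of the limit equation, and ``otherwise $\phi_\infty$ would strictly exceed its max somewhere'' is not an argument for it. The alternative you mention in passing is exactly the paper's proof and is much shorter: take $\phi\equiv 1$ in the variational characterization and compute $\frac d\ell\int_0^\ell\int_0^\ell J(x-y)\,dy\,dx\ \ge\ \frac d\ell\,(\ell-2L)(1-\epsilon)\ \to\ d(1-\epsilon)$, using that $\int_{-L}^{L}J>1-\epsilon$ for $L=L_\epsilon$; this two-line computation replaces the whole compactness machinery and should actually be carried out, since it is the real content of part (ii).
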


\begin{proof} Since $a_0$ is a constant, it follows easily from the definition that $\lambda_p(\mathcal{L}_{(\ell_1, \ell_2)}+  a_0)$ depends only on $\ell:=\ell_2-\ell_1$.
So we only need to prove the stated conclusions for $(\ell_1, \ell_2)=(0, \ell)$.

(i) Suppose $\hat \ell>\ell$ and denote
\[
\lambda_p:=\lambda_p(\mathcal{L}_{(0, \ell)}+  a_0),\; \hat \lambda_p:=\lambda_p(\mathcal{L}_{(0, \hat\ell)}+  a_0).
\]
To prove the monotonicity in $\ell$ it suffices to show $\lambda_p<\hat\lambda_p$.
Let  $\phi$ be a positive eigenfunction corresponding to  $\lambda_p$.
Then  $\phi$ is positive and continuous over $[0, l]$, and
\[
(\mathcal{L}_{(0, \ell)}+  a_0)[\, \phi\,](x)=\lambda_p \phi(x) \mbox{ for } x\in [0,\ell].
\]
Since
\[
\int_0^\ell J(x-y)\phi(y)dy>0,
\]
it follows from the above identity that
\[
c_0:=\lambda_p+d-a_0>0.
\]
Define
\[
\tilde\phi(x):=\frac d{c_0}\int_0^\ell J(x-y)\phi(y)dy \mbox{ for } x\in [0, \hat \ell].
\]
 Clearly $\tilde\phi(x)=\phi(x)$ for $x\in [0, \ell]$ and $\tilde \phi(x)>0$ when $x-\ell>0$ is small.

By the variational characterization of $\hat\lambda_p(\mathcal{L}_{(0, \hat\ell)}+  a_0)$
(see, e.g., \cite{BerestyckiJFA2016}), we have
\[
\hat\lambda_p=\sup_{0\not\equiv \psi\in L^2([0,\hat\ell])}  \frac{\displaystyle d \int_{0}^{\hat\ell} \int_{0}^{\hat\ell} J(x-y) \psi(y)\psi(x) dydx}{\displaystyle\int_0^{\hat\ell} \psi^2(x)dx}- d  + a_0.
\]
It follows that
\[
\hat\lambda_p+d-a_0\geq \tilde \lambda:= \frac{\displaystyle d \int_{0}^{\hat\ell} \int_{0}^{\hat\ell} J(x-y) \tilde\phi(y)\tilde \phi(x) dydx}{\displaystyle\int_0^{\hat\ell} \tilde\phi^2(x)dx}.
\]
Clearly
\[
\begin{array}{ll}
\displaystyle d \int_{0}^{\hat\ell} \int_{0}^{\hat\ell} J(x-y) \tilde\phi(y)\tilde\phi(x) dydx&=\displaystyle\int_0^{\hat\ell}\Big[c_0\tilde\phi(x)
+d\int_{\ell}^{\hat\ell}J(x-y) \tilde\phi(y)dy\Big]\tilde\phi(x) dx\\
&>c_0\displaystyle\int_0^{\hat\ell}\tilde\phi^2(x)dx.
\end{array}
\]
Therefore
\[
\tilde\lambda>c_0=\lambda_p+d-a_0
\]
and so
$
\hat\lambda_p>\lambda_p$.
This proves the monotonicity.

We next prove the continuity in $\ell$. By Lemma 2.4 in  \cite{BerestyckiJFA2016} we have, for any $\ell_0>0$,
\[
\lim_{\ell\nearrow\ell_0}\lambda_p(\mathcal{L}_{(0, \ell)}+  a_0)=\lambda_p(\mathcal{L}_{(0, \ell_0)}+  a_0).
\]
It remains to show
\begin{equation}\label{r-lim}
\lim_{\ell\searrow\ell_0}\lambda_p(\mathcal{L}_{(0, \ell)}+  a_0)=\lambda_p(\mathcal{L}_{(0, \ell_0)}+  a_0).
\end{equation}
Denote $\lambda_p^0:=\lambda_p(\mathcal{L}_{(0, \ell_0)}+  a_0)$ and let $\phi_0(x)>0$ be a corresponding eigenfunction of $\lambda_p^0$.
We extend $\phi_0(x)$  by defining $\phi_0(x)=\phi_0(\ell_0)$ for $x\geq \ell_0$, and denote the extended $\phi_0$ still by itself. We claim that for any $\epsilon>0$, there exists $\delta>0$ small so that for all $\ell\in (\ell_0, \ell_0+\delta)$,
\[
(\mathcal{L}_{(0, \ell)}+  a_0)[\,\phi_0\,](x)\leq (\lambda_p^0+\epsilon)\phi_0(x) \mbox{ for } x\in [0,\ell].
\]
Clearly this implies $\lambda_p^0<\lambda_p(\mathcal{L}_{(0, \ell)}+  a_0)\leq \lambda_p^0+\epsilon $ for such $\ell$, and \eqref{r-lim} is a consequence of this conclusion.
So to complete the proof of the continuity on $\ell$, it suffices to prove the above claim.

We have, for $\ell>\ell_0$,
\begin{eqnarray*}
(\mathcal{L}_{(0, \ell)}+  a_0)[\,\phi_0\,](x)&=& d\int_0^\ell J(x-y)\phi_0(y)dy+(a_0-d)\phi_0(x)\\
&=&d\int_0^{\ell_0}J(x-y)\phi_0(y)dy+(a_0-d)\phi_0(x)+d\int_{\ell_0}^\ell J(x-y)\phi_0(\ell_0)dy
\end{eqnarray*}
Clearly
\[
0\leq d\int_{\ell_0}^\ell J(x-y)\phi_0(\ell_0)dy\leq d\|J\|_\infty \phi_0(\ell_0)(\ell-\ell_0)<\epsilon \mbox{ for } \ell\in (\ell_0,\ell_0+\delta^1_\epsilon),\; x\in [0,\ell],
\]
where $\delta_\epsilon^1:=\epsilon/[d\|J\|_\infty \phi_0(\ell_0)]$.
Moreover,
\[
\tilde \phi_0(x):=d\int_0^{\ell_0}J(x-y)\phi_0(y)dy+(a_0-d)\phi_0(x)=\lambda_p^0\,\phi_0(x) \mbox{ for } x\in [0,\ell_0],
\]
and $\tilde\phi_0(x)$ is a continuous function for $x\in [0,+\infty)$. Therefore, for sufficiently small $\delta^2_\epsilon>0$,
\[
|\tilde \phi_0(x)-\lambda_p^0\,\phi_0(\ell_0) |<\epsilon \mbox{ for } x\in (\ell_0, \ell_0+\delta^2_\epsilon).
\]
It follows that
\[
(\mathcal{L}_{(0, \ell)}+  a_0)[\,\phi_0\,](x)\leq \lambda_p^0\,\phi_0(x)+2\epsilon \leq \left[\lambda_p^0+\frac{2\epsilon}{\min_{x\in [0,\ell_0]}\phi_0(x)}\right]\phi_0(x)
\mbox{ for } x\in [0,\ell],
\]
provided that $\ell\in (\ell_0, \ell_0+\delta_\epsilon)$, with $\delta_\epsilon:=\min\{\delta_\epsilon^1,\delta_\epsilon^2\}$. This clearly implies our claim and the continuity of
$\lambda_p(\mathcal{L}_{(0, \ell)}+  a_0)$ on $\ell$ is thus proved.

(ii) 
Due to $\int_{\mathbb R} J(x)dx=1$, we have
\[
\int_{0}^{\ell} \int_{0}^{\ell} J(x-y) \phi(y)\phi(x) dydx\leq \int_{0}^{\ell} \int_{0}^{\ell} J(x-y)\frac{\phi^2(y)+\phi^2(x)}{2}dydx\leq \int_0^\ell \phi^2(x) dx.
\]
It then follows from the above variational characterization that
\[
\lambda_p(\mathcal{L}_{(0, \ell)}+  a_0)\leq d-d+a_0=a_0.
\]

By \textbf{(J)}, for any small $\epsilon>0$, there exists $L=L_\epsilon>0$ such that
\[
\int_{-L}^LJ(x)dx>1-\epsilon.
 \]
  Then taking $\phi\equiv 1$ as the test function in the variational characterization of
$\lambda_p(\mathcal{L}_{(0, \ell)}+  a_0)$ we obtain, for all large $\ell>0$,
\begin{eqnarray*}
\lambda_p(\mathcal{L}_{(0, \ell)}+  a_0) &\geq&  \frac{d\displaystyle \int_{0}^{\ell} \int_{0}^{\ell} J(x-y)  dydx}{ \ell }-d+a_0 \\
& \geq &\frac{d\displaystyle\int_{L}^{\ell-L}\int_{0}^{\ell} J(x-y)  dydx}{ \ell } -d+a_0\\
&\geq & \frac{d\left(\ell-2L \right)\displaystyle\int_{-L}^L J(\xi)d\xi}{ \ell }-d+a_0\\
&\geq& \frac{d\left(\ell-2L \right)(1-\epsilon)}{ \ell }-d+a_0\\
& \to& -\epsilon d+a_0 \;\;\;\; \mbox{ as } \ell\to+\infty.
\end{eqnarray*}
Hence
\[
\liminf_{\ell\to+\infty}\lambda_p(\mathcal{L}_{(0, \ell)}+  a_0)\geq -\epsilon d+a_0.
\]
Since $\epsilon>0$ can be arbitrarily small, it follows that
\[
\liminf_{\ell\to+\infty}\lambda_p(\mathcal{L}_{(0, \ell)}+  a_0)\geq a_0,
\]
which together with $\lambda_p(\mathcal{L}_{(0, \ell)}+  a_0)\leq a_0$ proves the desired result.

(iii) We want to show  that
\begin{equation}\label{pf-prop-h-zero}
\lim_{h\rightarrow 0^+} \lambda_p(\mathcal{L}_{(0,  h)}+  a_0) = a_0 -d.
\end{equation}
Since $\lambda_h := \lambda_p(\mathcal{L}_{(0, h)}+  a_0)$ is a principal eigenvalue,   there exists  a strictly positive function $\phi_h \in C([0,h])$ such that
$$
d \int_{0}^h J(x-y) \phi_h(y)dy - d \phi_h(x)+a_0 \phi_h(x)  = \lambda_h \phi_h\ \ \  \textrm{in}\ [0,h].
$$
Therefore
\begin{eqnarray*}
\big| \lambda_h  -a_0 +d \big|& =& \frac{d\displaystyle \int_{0}^h \int_{0}^h J(x-y) \phi_h(y) \phi_h (x)dy dx  }{ \displaystyle\int_{0}^h \phi_h^2 (x)dx} \leq  \frac{d \displaystyle\|J\|_\infty \left( \int_{0}^h  \phi_h (x)dx \right)^2 }{\displaystyle \int_{0}^h \phi_h^2 (x)dx} \\
&\leq & \frac{d \|J\|_\infty h \displaystyle \int_{0}^h  \phi_h^2 (x)dx  }{ \displaystyle\int_{0}^h \phi_h^2 (x)dx}=d\|J\|_\infty h\to 0\;\; \mbox{ as $h \rightarrow 0^+$.}
\end{eqnarray*}
 This proves \eqref{pf-prop-h-zero}.
\end{proof}

\subsubsection{\underline{Some  nonlocal problems over fixed spatial domains}}
We now recall some well known conclusions for nonlocal
diffusion equations over  fixed spatial domains. We first consider the problem
\begin{equation}\label{fixedbdry}
\left\{
\begin{aligned}
&u_t = d\left[\int_{\ell_1}^{\ell_2}J(x-y)u(t,y)dy-u(t,x)\right] + f(u),&  &  t>0,~x\in[\ell_1,\ell_2],\\
& u(0,x)=u_0(x),&  & x\in[\ell_1,\ell_2],
\end{aligned}
\right.
\end{equation}
where $-\infty < \ell_1<\ell_2< +\infty$.

\begin{proposition}[\cite{BatesJMAA2007, CovilleJDE2010}]\label{prop-single}
Denote $\Omega = (\ell_1, \ell_2)$ and suppose  \textbf{(J)} and \textbf{(f1)}--\textbf{(f4)} hold. Then \eqref{fixedbdry} admits a unique positive steady state $u_{\Omega}$ in $C(\bar\Omega)$ if and only if
$$
\lambda_p(\mathcal{L}_{\Omega}+ f'(0))>0.
$$
Moreover, for $u_0(x) \in C(\bar \Omega)$  and $u_0\ge,\not\equiv0$, \eqref{fixedbdry} has a unique solution $u(t,x)$ defined for all $t>0$,
and it converges to $u_{\Omega}$ in $C(\bar\Omega)$ as $t\rightarrow +\infty$ when $\lambda_p(\mathcal{L}_{\Omega}+ f'(0))>0$;
when $\lambda_p(\mathcal{L}_{\Omega}+ f'(0))\leq 0$,  $u(t,x)$ converges to  $0$ in $C(\bar\Omega)$ as $t\rightarrow +\infty$.
\end{proposition}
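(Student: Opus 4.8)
This is a known result; see \cite{BatesJMAA2007, CovilleJDE2010}. I sketch how it may be obtained from the tools already at hand. First I would establish global well-posedness of \eqref{fixedbdry}: local existence follows from the contraction mapping theorem applied to the integral (mild) formulation, treating \eqref{fixedbdry} as an ODE in the Banach space $C(\overline\Omega)$ and using the local Lipschitz condition \textbf{(f1)} --- this is much simpler than the argument behind Lemma \ref{Lemma202} because the spatial domain is fixed. The a priori bound $0\le u(t,x)\le M_0:=\max\{\|u_0\|_\infty, K_0\}$, a consequence of \textbf{(f2)} and the maximum principle Lemma \ref{lemma-MP2}, then allows the local solution to be extended to all $t>0$. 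The same maximum principle, applied to the difference of two solutions (after writing $f(u)=c(t,x)u$ with $c\in L^\infty$, using \textbf{(f1)} and the a priori bound), yields a comparison principle for \eqref{fixedbdry}; in particular $t\mapsto u(t,\cdot)$ is nondecreasing (resp.\ nonincreasing) whenever the initial datum is a stationary subsolution (resp.\ supersolution), a fact used repeatedly below.

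Next I would determine the steady states. Put $\lambda_p:=\lambda_p(\mathcal L_\Omega+f'(0))$ and let $\phi>0$ be a corresponding principal eigenfunction (which exists since $\Omega$ is bounded). If $\lambda_p>0$, then because \textbf{(f3)}--\textbf{(f4)} give $f(u)/u\to f'(0)$ as $u\to0^+$ uniformly on the compact set $\overline\Omega$, for all small $\epsilon>0$ one has
\[
d\big(J*(\epsilon\phi)-\epsilon\phi\big)+f(\epsilon\phi)=\epsilon\phi\Big[\lambda_p+\tfrac{f(\epsilon\phi)}{\epsilon\phi}-f'(0)\Big]\ge 0\quad\text{on }\overline\Omega,
\]
so $\underline u:=\epsilon\phi$ is a stationary subsolution, while the constant $M_0$ (with $f(M_0)\le0$ since $M_0\ge K_0$) is a stationary supersolution and $\underline u\le M_0$. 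Solving \eqref{fixedbdry} from the datum $\underline u$ produces a solution nondecreasing in $t$ and bounded above by $M_0$, hence converging to a steady state $u_\Omega$ with $\epsilon\phi\le u_\Omega\le M_0$; a convolution bootstrap using $u_\Omega=J*u_\Omega+f(u_\Omega)/d$ shows $u_\Omega\in C(\overline\Omega)$. Conversely, if $u>0$ is a steady state then $(\mathcal L_\Omega+f(u)/u)[u]=0$ with $u>0$, so $\lambda_p(\mathcal L_\Omega+f(u)/u)=0$ (the principal eigenvalue being the unique eigenvalue admitting a positive eigenfunction); since \textbf{(f3)} forces $f(u(x))/u(x)<f'(0)$ for every $x\in\overline\Omega$, the strict monotonicity of the principal eigenvalue in its zeroth-order coefficient (immediate from the variational characterization used in the proof of Proposition \ref{prop-EV}) gives $\lambda_p>0$. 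For uniqueness, suppose $u_1,u_2$ are positive steady states; as each $u_i\in C(\overline\Omega)$ is strictly positive on the compact set $\overline\Omega$, the number $\tau^*:=\max_{\overline\Omega}(u_2/u_1)$ is finite and positive and is attained at some $x_0$, so $w:=\tau^*u_1-u_2\ge0$ with $w(x_0)=0$ and $\tau^*u_1(x_0)=u_2(x_0)$. Subtracting the steady-state equation for $u_2$ from $\tau^*$ times that for $u_1$, and evaluating at $x_0$, yields
\[
0=d\!\int_\Omega J(x_0-y)w(y)\,dy+u_2(x_0)\Big[\tfrac{f(u_1(x_0))}{u_1(x_0)}-\tfrac{f(u_2(x_0))}{u_2(x_0)}\Big].
\]
If $\tau^*>1$ then $u_1(x_0)<u_2(x_0)$, so by \textbf{(f3)} the bracket is positive while the integral is nonnegative --- impossible. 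Hence $\tau^*\le1$, i.e.\ $u_2\le u_1$; exchanging $u_1$ and $u_2$ gives $u_1=u_2$.

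For the long-time behaviour, fix $u_0\ge,\not\equiv0$. Starting \eqref{fixedbdry} from the constant $\widetilde M:=\max\{\|u_0\|_\infty,M_0,\sup_{\overline\Omega}u_\Omega\}$ (a supersolution) gives a solution $\overline v$ nonincreasing in $t$, bounded below by $0$ and, when $\lambda_p>0$, by $u_\Omega$; it therefore converges to a nonnegative steady state, which by the uniqueness above equals $u_\Omega$ when $\lambda_p>0$, and is forced to be $\equiv0$ when $\lambda_p\le0$ --- here one uses that, by the strong maximum principle underlying Lemma \ref{lemma-MP2} together with $f(0)=0$ and $J(0)>0$, any steady state is either identically $0$ or positive throughout $\overline\Omega$, and no positive steady state exists in this case. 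Since $u_0\le\widetilde M$, comparison gives $\limsup_{t\to\infty}u(t,\cdot)\le\lim_{t\to\infty}\overline v(t,\cdot)$ pointwise. When $\lambda_p>0$, for a matching lower bound note that Lemma \ref{lemma-MP2} gives $u(t_0,\cdot)>0$ on $\overline\Omega$ for some (indeed every) $t_0>0$, hence $u(t_0,\cdot)\ge\epsilon\phi$ for small $\epsilon$; the solution started from $\epsilon\phi$ is nondecreasing and converges, again by uniqueness, to $u_\Omega$, so $\liminf_{t\to\infty}u(t,\cdot)\ge u_\Omega$. Combining the two bounds yields $u(t,\cdot)\to u_\Omega$; when $\lambda_p\le0$ the same squeeze (now between $0$ and $\overline v$) yields $u(t,\cdot)\to0$. (When $\lambda_p<0$ the decay may alternatively be read off from comparison with the linear semigroup generated by $\mathcal L_\Omega+f'(0)$, whose spectral bound is $\lambda_p<0$; its spectrum is $\{\lambda_p\}$ together with a set contained in an interval of the form $(-\infty,f'(0)-d+\varepsilon]$, obtained from the compactness of $\phi\mapsto d\int_\Omega J(\cdot-y)\phi(y)\,dy$ and Proposition \ref{prop-EV}.) Finally, all the convergences above are monotone and pointwise, so they upgrade to convergence in $C(\overline\Omega)$ by Dini's theorem once one knows the limiting steady state is continuous.

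The routine part of all this is the monotone-iteration scheme. I expect the two genuinely delicate points to be: (i) the \emph{regularity} of the steady state $u_\Omega$ --- one must run the convolution bootstrap to see that the monotone (a priori only semicontinuous) limit is continuous before Dini's theorem can be invoked to pass from pointwise to uniform convergence; and (ii) the \emph{critical case} $\lambda_p=0$, where there is no exponential linear decay, so the conclusion $u(t,\cdot)\to0$ must be extracted from the strong maximum principle and the nonexistence of a positive steady state rather than from a soft semigroup estimate.
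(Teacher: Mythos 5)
The paper offers no proof of Proposition \ref{prop-single}: it is quoted directly from \cite{BatesJMAA2007, CovilleJDE2010}, so there is no internal argument to compare yours against. Your decision to defer to those references is exactly what the paper does, and the sketch you supply (well-posedness via contraction in $C(\bar\Omega)$, comparison from Lemma \ref{lemma-MP2}, the sub/supersolution pair $\epsilon\phi$ and $M_0$ with monotone iteration, the $\tau^*:=\max(u_2/u_1)$ sliding argument for uniqueness under \textbf{(f3)}, and the squeeze for convergence) is essentially the argument used in the cited works, so in substance the approach matches. Two details in your sketch deserve more than a passing mention if one were to write it out: first, the step ``$\lambda_p(\mathcal L_\Omega+f(u)/u)=0$ because the principal eigenvalue is the unique eigenvalue with a positive eigenfunction'' is not automatic for nonlocal operators with nonconstant zeroth-order coefficient (where $\lambda_p$ need not even be attained); here it is usually justified by a two-sided test-function comparison using $u$ itself, after which the strict inequality $f(u(x))/u(x)\le f'(0)-\delta$ (valid since $\min_{\bar\Omega}u>0$) gives $\lambda_p(\mathcal L_\Omega+f'(0))\ge\delta>0$ as you indicate; second, one must verify that the pointwise monotone limit of the iteration is indeed a steady state (dominated convergence in the Duhamel/integrated equation) before the continuity bootstrap and Dini upgrade you describe can be run --- you correctly flag this and the critical case $\lambda_p=0$ as the delicate points.
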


We note that when {\bf (f1)-(f4)} hold, the function $f(u)$ has a unique positive zero $v_0\in (0, K_0)$.

\begin{proposition}\label{prop-whole}
Assume   \textbf{(J)} and \textbf{(f1)}--\textbf{(f4)}  hold. Then there exists $L>0$ such that for every interval $(\ell_1,\ell_2)$ with length $\ell_2-\ell_1>L$,
we have $\lambda_p(\mathcal{L}_{(\ell_1, \ell_2)}+ f'(0))>0$ and hence \eqref{fixedbdry} has a unique positive steady state $u_{(\ell_1,\ell_2)}$; moreover,
 \begin{equation}\label{lim-u}
 \lim_{-\ell_1,\ell_2\to+\infty} u_{(\ell_1,\ell_2)}=v_0\;\; \mbox{ locally uniformly in $\mathbb{R}$.}
 \end{equation}
\label{proposition37}
\end{proposition}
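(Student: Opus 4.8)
The plan is to prove Proposition \ref{proposition37} in two parts. First, the existence of $L>0$ such that $\lambda_p(\mathcal L_{(\ell_1,\ell_2)}+f'(0))>0$ whenever $\ell_2-\ell_1>L$: by Proposition \ref{prop-EV}(i) and (ii), with $a_0=f'(0)>0$ (which is positive by \textbf{(f4)}), the quantity $\lambda_p(\mathcal L_{(\ell_1,\ell_2)}+f'(0))$ depends only on $\ell:=\ell_2-\ell_1$, is strictly increasing in $\ell$, and tends to $f'(0)>0$ as $\ell\to\infty$. Hence there is a threshold $L>0$ with $\lambda_p(\mathcal L_{(\ell_1,\ell_2)}+f'(0))>0$ for all $\ell>L$. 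Once this sign condition holds, Proposition \ref{prop-single} immediately yields the existence and uniqueness of the positive steady state $u_{(\ell_1,\ell_2)}\in C(\overline{(\ell_1,\ell_2)})$.

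The second and main part is the limit \eqref{lim-u}. I would use a squeezing argument with two families of comparison functions. For the upper bound, note that $v_0$ (the unique positive zero of $f$, lying in $(0,K_0)$ by \textbf{(f1)-(f4)}) is a spatially constant supersolution of the steady-state equation on any bounded interval: since $\int_{\ell_1}^{\ell_2}J(x-y)\,dy\le 1$ and $f(v_0)=0$, one checks $d\big[\int_{\ell_1}^{\ell_2}J(x-y)v_0\,dy - v_0\big]+f(v_0)\le 0$. A comparison principle for the steady-state problem (obtainable from Proposition \ref{prop-single}, e.g. by running the parabolic flow from initial data $v_0$ and using monotone convergence, or directly from the sub/supersolution structure) then gives $u_{(\ell_1,\ell_2)}\le v_0$ on $(\ell_1,\ell_2)$. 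For the lower bound, I would fix an arbitrary bounded interval $I=(-R,R)$ and a large interval $(\ell_1,\ell_2)\supset (-2R,2R)$. Choose $\ell^*$ large enough that $\lambda_p(\mathcal L_{(-M,M)}+f'(0))>0$ for $M\ge \ell^*$ — possible by Proposition \ref{prop-EV} again — and take $M$ large but fixed. Using \textbf{(f3)}-\textbf{(f4)}, for small $\varepsilon>0$ we have $f(u)\ge (f'(0)-\eta)u$ for $u\in[0,\varepsilon]$ with $\eta$ small; pick $\eta$ so small that $\lambda_p(\mathcal L_{(-M,M)}+f'(0)-\eta)>0$ still holds. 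Then $\varepsilon\phi_M$, with $\phi_M>0$ a principal eigenfunction on $(-M,M)$ suitably scaled and $\varepsilon$ small, is a subsolution of the steady-state equation for $u_{(\ell_1,\ell_2)}$ on $(-M,M)\subset(\ell_1,\ell_2)$ (here one uses that the nonlocal term only gains from the larger domain, so $\int_{\ell_1}^{\ell_2}J(x-y)(\varepsilon\phi_M)(y)\,dy\ge \int_{-M}^{M}J(x-y)(\varepsilon\phi_M)(y)\,dy$, after extending $\phi_M$ by $0$). Comparison then forces $u_{(\ell_1,\ell_2)}\ge \varepsilon\phi_M$ on $(-M,M)$, hence a positive lower bound on $I$ uniform in $(\ell_1,\ell_2)$.

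To upgrade the two-sided bound to the actual limit $u_{(\ell_1,\ell_2)}\to v_0$ locally uniformly, I would argue by a normal-families/compactness argument. Since $\{u_{(\ell_1,\ell_2)}\}$ is bounded in $[0,v_0]$ and satisfies $u=\frac1d\big(d\int J(x-y)u\,dy+f(u)\big)$, the right-hand side enjoys enough regularity (via continuity of $J$ and $f$ and dominated convergence) that along any sequence $(\ell_1^n,\ell_2^n)$ with $-\ell_1^n,\ell_2^n\to\infty$, a subsequence of $u_{(\ell_1^n,\ell_2^n)}$ converges locally uniformly on $\mathbb R$ to some $0\le u_\infty\le v_0$ solving $d\big[\int_{\mathbb R}J(x-y)u_\infty(y)\,dy-u_\infty\big]+f(u_\infty)=0$ on $\mathbb R$; the lower bound from the previous paragraph shows $u_\infty>0$ on every bounded interval, hence $u_\infty\not\equiv 0$. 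A Liouville-type uniqueness result for bounded positive steady states on all of $\mathbb R$ under \textbf{(f1)-(f4)} (the strict concavity-type condition \textbf{(f3)} is exactly what rules out non-constant or non-$v_0$ bounded solutions) then forces $u_\infty\equiv v_0$; since the limit is independent of the subsequence, \eqref{lim-u} follows.

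The main obstacle I anticipate is the last step: establishing that the only bounded positive entire steady state is $v_0$, i.e. the Liouville property on $\mathbb R$. This requires a genuine argument — for instance, a sliding/sweeping argument exploiting \textbf{(f3)} (strict monotonicity of $f(u)/u$), or comparison with the solutions $u_{(-M,M)}$ which themselves satisfy $u_{(-M,M)}\to v_0$ as $M\to\infty$ by Proposition \ref{prop-EV}(ii) combined with the monotone dependence on the domain (each $u_{(-M,M)}$ increases with $M$ by the comparison structure, so the limit exists and must equal $v_0$ since it is a spatially homogeneous solution bounded by $v_0$ and bounded below by the eigenfunction estimate). In fact this monotone-in-$M$ route may let me bypass the general Liouville statement entirely: sandwich $u_{(\ell_1,\ell_2)}$ between $u_{(-M,M)}$ (for $(-M,M)\subset(\ell_1,\ell_2)$) and $v_0$, then send the domain to $\mathbb R$ and afterwards $M\to\infty$. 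I would pursue this cleaner sandwich approach first and fall back on the compactness/Liouville argument only if the monotone dependence of $u_{(-M,M)}$ on $M$ is not readily available from the cited results.
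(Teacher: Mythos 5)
Your first part (choice of $L$ via Proposition \ref{prop-EV} and existence/uniqueness of $u_{(\ell_1,\ell_2)}$ via Proposition \ref{prop-single}) matches the paper, and your upper bound $u_{(\ell_1,\ell_2)}\le v_0$ and the small subsolution $\varepsilon\phi_M$ giving a locally uniform positive lower bound are both sound (modulo routine care with the zero-extension of $\phi_M$, handled by restricting the comparison to $[-M,M]$ where the nonlocal term only helps). The genuine gap is exactly where you suspected it, and neither of your two routes closes it. In the compactness route, the Liouville property is left unproved, and there is an additional unaddressed issue even before that: local uniform (or even local equicontinuous) compactness of $\{u_{(\ell_1^n,\ell_2^n)}\}$ is not automatic for nonlocal steady states, since from $du-f(u)=d\int J(\cdot-y)u\,dy$ one cannot solve continuously for $u$ unless $s\mapsto ds-f(s)$ is monotone, which \textbf{(f1)}--\textbf{(f4)} do not guarantee (e.g.\ $f'(0)\ge d$ is allowed); so Arzel\`a--Ascoli does not apply as stated. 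In the sandwich route, the assertion that the monotone-in-$M$ limit of $u_{(-M,M)}$ ``is a spatially homogeneous solution'' is precisely the missing step: monotonicity in $M$ gives existence of a pointwise limit and symmetry gives evenness, but nothing you wrote forces constancy, and the eigenfunction lower bound only yields a small positive bound, not $v_0$. So the sandwich does not bypass the Liouville-type statement; it silently assumes it.

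For comparison, the paper resolves this step without any Liouville theorem or compactness: using the domain-monotonicity you already have, it reduces to the symmetric families $(z_i-n,z_i+n)$, shows the increasing pointwise limits $\tilde u_i$ satisfy $\tilde u_1(\cdot-z_1+z_2)=\tilde u_2$ by translation invariance of $f$, and then proves $\tilde u_1\equiv\tilde u_2$ by nesting ($[z_1-n-N_1,z_1+n+N_1]\supset[z_2-n,z_2+n]$ with $N_1>|z_1-z_2|$) and the same domain-monotonicity; since $z_1,z_2$ are arbitrary this forces the limit to be a constant $c_0>0$, passing to the limit in the equation identifies $c_0$ as a positive zero of $f$, hence $c_0=v_0$, and Dini's theorem upgrades the monotone pointwise convergence to locally uniform convergence. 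If you want to salvage your write-up, the cleanest fix is to replace your ``homogeneity'' assertion by this translation-plus-nesting argument (or, alternatively, actually prove a Liouville result for bounded entire solutions with positive infimum using \textbf{(f3)}, which also requires a translation/sliding argument of comparable length); also remember to invoke Dini (or monotonicity plus continuity of the constant limit) to get the locally uniform convergence claimed in \eqref{lim-u}.
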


\begin{proof}
Since
$f'(0)>0$, by Lemma 2.4 of \cite{BerestyckiJFA2016} and Proposition \ref{prop-EV}, we have
$$
 \lim_{\ell_2-\ell_1\rightarrow +\infty} \lambda_p(\mathcal{L}_{(\ell_1,\ell_2)}+ f'(0))=f'(0)>0,
$$
and so there exists $L>0$ large such that
\[
 \lambda_p(\mathcal{L}_{(\ell_1, \ell_2)}+ f'(0))>0\;\; \mbox{ whenever } \ell_2-\ell_1>L.
\]

Fix a positive function $u_0\in L^\infty(\mathbb R)\cap C(\mathbb R)$. By Lemma \ref{lemma-MP2} we can use a simple comparison argument to
show that the unique solution $u_{(\ell_1,\ell_2)}(t,x)$ of \eqref{fixedbdry} has the property
\[
\mbox{$u_{I_1}(t,x)\leq u_{I_2}(t,x)$ for $t>0$ and $x\in I_1$
if }
I_1:=(\ell_1^1,\ell_2^1)\subset I_2:=(\ell_1^2, \ell_2^2).
\]
Letting $t\to+\infty$, it follows that, when $|I_1|>L$,
\[
\mbox{$u_{I_1}(x)\leq u_{I_2}(x)$ for $x\in I_1$
if }
I_1:=(\ell_1^1,\ell_2^1)\subset I_2:=(\ell_1^2, \ell_2^2).
\]
This monotonicity property implies that to show \eqref{lim-u}, it sufficies to prove it along any particular sequence $(\ell_1^n,\ell_2^n)$
with $\ell_1^n\to-\infty,\;\ell_2^n\to+\infty$ as $n\to\infty$.

Fix $z_1, z_2\in \mathbb R$ and consider $(z_i-n,z_i+n)$, $ i=1,2.$
There exits $N \in \mathbb N$ such that for $n\geq N$,
$$
\lambda_p(\mathcal{L}_{(z_i-n,z_i+n)}+ f'(0))>0, \ i=1,2.
$$
 Due to Proposition \ref{prop-single}, one sees that for $n\geq N$,
 the problem
\begin{equation}\label{u-n}
\left\{
\begin{aligned}
&u_t = d\left[\int_{z_i -n}^{z_i +n}J(x-y)u(t,y)dy-u(t,x)\right] + f(u),&  &  t>0,~x\in [z_i-n ,z_i +n],\\
& u(0,x)=u_0(x),&  & x\in[z_i-n ,z_i +n]
\end{aligned}
\right.
\end{equation}
admits a unique positive steady state $\tilde u_{i,n} \in C([z_i-n ,z_i +n])$ and its unique solution $u_{i,n}(t,x)$ converges to $\tilde u_{i,n}(x) $ in $C([z_i-n ,z_i +n])$ as $t\rightarrow +\infty$, $i=1,2$. Moreover,  if $m>n$, then $u_{i,m}(t,x)\geq  u_{i,n}(t,x)$ for $t>0, z_i-n \leq x \leq z_i +n$.

By making use of Lemma \ref{lemma-MP2} and the ODE problem
\[
\overline u'=f(\overline u),\; \overline u(0)=\|u_0\|_\infty,
\]
we deduce $u_{i,n}(t,x)\leq \overline u(t)$ and hence
  $\tilde u_{i,n}\leq K_0$.
  Therefore there exists $\tilde u_i \in L^{\infty}(\mathbb R)$ such that $\tilde u_{i,n}(x) $ converges to $\tilde u_i(x)$ for every $x\in\mathbb R$ as $n\rightarrow +\infty$.

Since $f=f(u)$, one sees that $\tilde u_1(x-z_1+z_2) = \tilde u_2(x)$.
We now show that
\begin{equation}\label{claim}
\tilde u_1(x) \equiv \tilde u_2(x).
\end{equation}
Fix $N_1\in\mathbb N$ with $N_1 > |z_1 - z_2|$. Then $[z_1-n-N_1, z_1+n+N_1]\supset [z_2-n, z_2+n]$ and hence, for $n\geq N$,
$$
 u_{1,n+N_1}(t, x)\geq  u_{2,n}(t, x) \mbox{ and so } \tilde u_{1,n+N_1}(x)\geq \tilde u_{2,n}(x) \ \ \ \textrm{for}\ x\in [z_2-n ,z_2 +n],
$$
which implies that $\tilde u_1 \geq \tilde u_2$ in $\mathbb R$ by letting $n\rightarrow +\infty$.
Similarly, we have $\tilde u_1 \leq \tilde u_2$ in $\mathbb R$. Hence \eqref{claim} holds.

 From \eqref{claim}  it follows immediately that $\tilde u_1(x)  \equiv c_0$ is a constant in $\mathbb R$ as $z_1$ and $z_2$ are arbitrary.
 It then follows that $\tilde u_{i,n}(x) $ converges to $c_0$ locally uniformly  in $\mathbb R$ as $n\rightarrow +\infty$ thanks to Dini's theorem.
 This in turn implies that $c_0$ is  a steady state of \eqref{fixedbdry} with $(\ell_1,\ell_2)$ is replaced by $(-\infty, +\infty)$
 and hence must be a positive zero of $f(u)$. Thus $c_0=v_0$ and $\tilde u_{1,n}(x)= u_{(z_1-n, z_1+n)}(x) $ converges to $v_0$ locally uniformly  in $\mathbb R$ as $n\rightarrow +\infty$, which implies \eqref{lim-u}.
\end{proof}

\subsection{The spreading-vanishing dichotomy and criteria}
\noindent

Throughout this subsection, we always assume that {\bf (J)} and
{\bf (f1)--(f4)} hold. Let $u_0$ satisfy \eqref{102}, and $(u,g,h)$ be the unique solution of \eqref{101}.
 Since $h(t)$ and $-g(t)$ are increasing in
time $t$,  $h_\infty:=\lim_{t\rightarrow+\infty}h(t)\in(h_0,
+ \infty]$ and $g_\infty:=\lim_{t\rightarrow+\infty}g(t)\in[
-\infty,-h_0)$ are well-defined. Theorems \ref{thm2} and \ref{thm3} will follow from the results proved below.

\begin{theorem}\label{thm-vanishing}
If $h_\infty-g_\infty< + \infty$,
then $u(t,x)\rightarrow0$ uniformly in $[g(t),h(t)]$ as
$t\rightarrow+\infty$ and
$\lambda_p(\mathcal{L}_{(g_\infty, h_\infty)}+ f'(0))\leq 0$.
\end{theorem}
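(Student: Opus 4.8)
The plan is to prove the two assertions in turn: first the eigenvalue inequality $\lambda_p(\mathcal{L}_{(g_\infty,h_\infty)}+f'(0))\le 0$, and then the uniform decay of $u$, which will follow from it.

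For the eigenvalue inequality I would argue by contradiction, assuming $\lambda_p(\mathcal{L}_{(g_\infty,h_\infty)}+f'(0))>0$. Let $\epsilon_0,\delta_0>0$ be the constants furnished by \textbf{(J)} with $J(z)\ge\delta_0$ for $|z|\le\epsilon_0$. By Proposition \ref{prop-EV}(i) the principal eigenvalue is strictly increasing and continuous in the length of the interval, so one can fix a closed interval $\bar I=[\,h_\infty-\ell-\eta,\;h_\infty-\eta\,]\subset(g_\infty,h_\infty)$, pushed towards the right endpoint, with $\ell+\eta<h_\infty-g_\infty$, $0<\eta<\epsilon_0/2$, and $\lambda_p(\mathcal{L}_{I}+f'(0))>0$. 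Since $h(t)\to h_\infty$ and $g(t)\to g_\infty$, there is $T_1$ with $\bar I\subset(g(t),h(t))$ for all $t\ge T_1$, so on $\bar I$ the function $u$ is a supersolution of the fixed-domain problem \eqref{fixedbdry} on $I$; comparing (by Lemma \ref{lemma-MP2}) with the solution $\underline u$ of \eqref{fixedbdry} on $I$ with $\underline u(T_1,\cdot)=u(T_1,\cdot)|_{\bar I}>0$, we get $u\ge\underline u$ on $\bar I$ for $t\ge T_1$. Proposition \ref{prop-single} then gives $\underline u(t,\cdot)\to u_I$ uniformly on $\bar I$, $u_I>0$ being the positive steady state, so $u(t,x)\ge c_*:=\tfrac12\min_{\bar I}u_I>0$ on $\bar I$ for all large $t$. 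Estimating the front speed from below,
\[
h'(t)=\mu\int_{g(t)}^{h(t)}u(t,x)\Big(\int_{h(t)}^{\infty}J(x-y)\,dy\Big)dx\ \ge\ \mu\,c_*\,\delta_0\,\frac{\epsilon_0}{2}\,\Big|\big(h(t)-\tfrac{\epsilon_0}{2},\,h(t)\big)\cap I\Big|,
\]
and since $h(t)\to h_\infty$ and $\eta<\epsilon_0/2$, the length on the right stays above a fixed $\rho>0$ for all large $t$; hence $h'(t)\ge\beta>0$ eventually and $h(t)\to\infty$, contradicting $h_\infty<\infty$.

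Granting the inequality, I would obtain the uniform decay by comparison over the fixed interval $\Omega:=(g_\infty,h_\infty)$. Since $\lambda_p(\mathcal{L}_{\Omega}+f'(0))\le 0$, Proposition \ref{prop-single} shows the solution $\bar u$ of \eqref{fixedbdry} on $\Omega$ with $\bar u(0,\cdot)\equiv M_0:=\max\{\|u_0\|_\infty,K_0\}$ satisfies $\bar u(t,\cdot)\to 0$ in $C(\overline\Omega)$. On the other hand, since $u\equiv 0$ outside $[g(t),h(t)]$ one has $\int_{g(t)}^{h(t)}J(x-y)u(t,y)\,dy=\int_{\Omega}J(x-y)u(t,y)\,dy$, so $u$, extended by $0$ to $\Omega$, is a subsolution of \eqref{fixedbdry} on $\Omega$ with $u(0,\cdot)\le M_0$ on $\overline\Omega$. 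A maximum-principle comparison — the proof of Lemma \ref{lemma-MP} remains valid here because $\bar u-u$ is continuous and Lipschitz in $t$ for each fixed $x$, even though $u_t$ jumps across the curves $x=g(t)$ and $x=h(t)$ — then yields $0\le u(t,x)\le\bar u(t,x)$ for $t>0$ and $x\in[g(t),h(t)]$, and therefore $\sup_{[g(t),h(t)]}u(t,\cdot)\le\|\bar u(t,\cdot)\|_{C(\overline\Omega)}\to 0$.

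The step I expect to be the main obstacle is making these comparisons rigorous: verifying that $u$, which lives on the moving region $[g(t),h(t)]$ and is defined to be $0$ outside it, is legitimately a super/sub-solution of the relevant fixed-domain problems, and that the maximum principle applies despite the discontinuity of $u_t$ across the free boundaries. This is handled by noting that the proofs of Lemma \ref{lemma-MP} and Lemma \ref{lemma-MP2} use only continuity together with absolute continuity (indeed Lipschitz continuity) in $t$ of the functions involved at each fixed $x$, which survive the jump. The remaining delicate point is the placement of $I$ in the first part, which must simultaneously satisfy $\lambda_p(\mathcal{L}_I+f'(0))>0$, lie compactly in $(g_\infty,h_\infty)$, and keep its right portion overlapping $(h(t)-\epsilon_0/2,h(t))$ for large $t$; this is arranged by taking $0<\eta<\epsilon_0/2$ and invoking Proposition \ref{prop-EV}.
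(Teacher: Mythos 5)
Your proposal is correct and follows essentially the same route as the paper: assume $\lambda_p>0$, compare $u$ from below with the fixed-domain problem \eqref{fixedbdry} on a slightly smaller interval with positive principal eigenvalue (via Lemma \ref{lemma-MP2} and Proposition \ref{prop-single}), deduce a positive lower bound for $u$ near the right front, bound $h'(t)$ below by a positive constant using \textbf{(J)} to contradict $h_\infty<\infty$, and then obtain the uniform decay by comparison with the fixed-domain problem on $(g_\infty,h_\infty)$ whose solution tends to $0$ when $\lambda_p\le 0$. The only cosmetic deviations are your right-shifted subinterval and your comparison on the fixed interval using the zero-extension of $u$ (handled by the Lipschitz-in-$t$ remark), where the paper instead compares directly on the moving domain via Lemma \ref{lemma-MP}; both are sound.
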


\begin{proof}
We first prove that
$$
\lambda_p(\mathcal{L}_{(g_\infty, h_\infty)}+ f'(0))\leq 0.
$$
Suppose that
$
\lambda_p(\mathcal{L}_{(g_\infty, h_\infty)}+ f'(0)) > 0$.
Then
$\lambda_p(\mathcal{L}_{(g_\infty + \epsilon, h_\infty - \epsilon)}+ f'(0)) > 0$ for small $\epsilon>0$, say $\epsilon\in (0,\epsilon_1)$. Moreover, for such $\epsilon$, there exists $ T_{\epsilon}>0$ such
that
$$
h(t)>h_\infty-\epsilon, \ \ g(t)<g_\infty+\epsilon\;\;\;  \mbox{ for $t> T_{\epsilon}$}.
$$
Consider
\begin{equation}\label{single-epsilon}
\left\{
\begin{aligned}
&w_t=d\int_{g_\infty + \epsilon}^{h_\infty - \epsilon}J(x-y)w(t,y)dy-dw +f(w),
&& t >  T_{\epsilon},~x\in [g_\infty + \epsilon, h_\infty - \epsilon],\\
&w( T_{\epsilon} ,x)=u(T_{\epsilon}, x), && x\in [g_\infty + \epsilon, h_\infty - \epsilon].
\end{aligned}
\right.
\end{equation}
Since $\lambda_p(\mathcal{L}_{(g_\infty + \epsilon, h_\infty - \epsilon)}+ f'(0)) > 0$, Proposition \ref{prop-single} indicates that
the  solution   $ w_{\epsilon}(t,x)$ of   (\ref{single-epsilon})  converges to the unique steady state $W_{\epsilon}(x)$ of (\ref{single-epsilon}) uniformly in $[g_\infty + \epsilon, h_\infty - \epsilon]$ as $t \rightarrow+\infty$.

Moreover, by Lemma \ref{lemma-MP2} and a simple  comparison argument we have
$$
u(t,x)\ge  w_{\epsilon}(t,x)~\text{ for }~t> T_{\epsilon}~\text{ and }~x\in[g_\infty+\epsilon,h_\infty-\epsilon].
$$
Thus, there exists $T_{1\epsilon}> T_{\epsilon}$ such that
$$
u(t,x)\ge {1\over 2} W_{\epsilon}(x)>0~\text{ for }~t> T_{1\epsilon}~\text{ and }~x\in[g_\infty+\epsilon,h_\infty-\epsilon].
$$

Note that since $J(0)>0$, there exist $\epsilon_0>0$ and $\delta_0>0$  such that $J(x)> \delta_0$ if $|x| < \epsilon_0$. Thus for $0<\epsilon <
\min\big\{\epsilon_1, \epsilon_0 /2\big\}$ and $t> T_{1\epsilon}$, we have
\begin{eqnarray*}
h'(t) &= & \mu\int_{g(t)}^{h(t)}\int_{h(t)}^{+\infty} J(x-y)u(t,x)dydx
      \geq  \mu\int_{g_\infty+\epsilon}^{h_\infty - \epsilon}\int_{h_\infty}^{+\infty} J(x-y)u(t,x)dydx\\
      &\geq & \mu\int_{h_\infty - \epsilon_0 /2}^{h_\infty - \epsilon}\int_{h_\infty}^{h_\infty + \epsilon_0 /2} \delta_0 {1\over 2} W_{\epsilon}(x) dydx >0.
\end{eqnarray*}
 This implies  $h_\infty=+\infty$, a contradiction  to the assumption that $h_\infty-g_\infty<+\infty$.
Therefore, we must have
$$
\lambda_p(\mathcal{L}_{(g_\infty, h_\infty)}+ f'(0))\leq 0.
$$

We are now ready to show that $u(t,x)\rightarrow 0$ uniformly in $[g(t),h(t)]$ as
$t\rightarrow+\infty$.
Let $\bar u(t,x)$ denote the unique solution of
\begin{equation}
\label{single-infinity}
\left\{
\begin{aligned}
&\bar u_t=d\int_{g_\infty}^{h_\infty}J(x-y)\bar u(t,y)dy-d\bar u(t,x)+f(\bar u),&  & t>0,~x\in [g_\infty,h_\infty],\\
&\bar u(0,x)=\tilde u_0(x), & &  x\in  [g_\infty,h_\infty],
\end{aligned}
\right.
\end{equation}
where
$$
\tilde u_0(x)=u_0(x)~\text{ if }-h_0\le x\le h_0~\text{ and }
\tilde u_0(x)=0~\text{ if }~x>h_0~\text{ or}~x<-h_0.
$$
By Lemma \ref{lemma-MP-u}, it follows that
$u(t,x)\le\bar u(t,x)$ for $t>0$ and $x\in[g(t),h(t)]$.

Since
$$
\lambda_p(\mathcal{L}_{(g_\infty, h_\infty)}+ f'(0))\leq 0.
$$
Proposition \ref{prop-single} implies that   $\overline u(t,x)
\rightarrow0$ uniformly in $x\in [g_\infty,h_\infty]$ as $t
\rightarrow+\infty$. Hence $u(t,x)\rightarrow0$ uniformly in
$x\in[g(t),h(t)]$ as $t\rightarrow+\infty$. This completes
the proof.
\end{proof}

\begin{lemma}\label{lemma-same}
 $h_\infty< + \infty$ if and only if
$-g_\infty<  + \infty$.
\end{lemma}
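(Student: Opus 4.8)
The plan is to argue by contradiction, exploiting the symmetry between the two free boundaries via the comparison principle (Theorem~\ref{thm-CP}). Suppose, without loss of generality, that $h_\infty=+\infty$ but $-g_\infty<+\infty$; we aim to derive a contradiction. Since $-g_\infty<+\infty$, the left front stays confined to some fixed interval, and in particular $g(t)\ge g_\infty>-\infty$ for all $t>0$. The key observation is that an unbounded right front forces the population to build up to a non-trivial level over an expanding portion of the domain, which in turn should force the left front to recede to $-\infty$ as well; this contradicts $-g_\infty<+\infty$.

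To make this precise, first I would use the growth of $h(t)$ to show that the solution $u(t,x)$ is bounded below by a positive quantity on a fixed interval near the left boundary for all large $t$. Concretely, once $h(t)$ is large, Proposition~\ref{prop-EV}(ii) gives $\lambda_p(\mathcal{L}_{(g_\infty,\, g_\infty+L)}+f'(0))>0$ for $L$ large enough and fixed (independent of $t$), because $f'(0)>0$; fix such an $L$ and choose $T_0$ so that $h(t)>g_\infty+L$ for $t\ge T_0$. Comparing $u$ from below (via Lemma~\ref{lemma-MP2} and a comparison argument exactly as in the proof of Theorem~\ref{thm-vanishing}) with the solution of the fixed-domain problem \eqref{fixedbdry} on $(g_\infty,\, g_\infty+L)$ with initial data $u(T_0,\cdot)$, and invoking Proposition~\ref{prop-single}, we obtain a positive steady state $W$ on $(g_\infty,\, g_\infty+L)$ and a time $T_1>T_0$ with
\[
u(t,x)\ge \tfrac12 W(x)>0 \quad\text{for } t\ge T_1,\ x\in[g_\infty,\, g_\infty+L].
\]

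With this uniform-in-time positive lower bound in hand, I would estimate $g'(t)$ from the fourth equation of \eqref{101}. Using assumption \textbf{(J)}, pick $\epsilon_0>0$, $\delta_0>0$ with $J\ge\delta_0$ on $[-\epsilon_0,\epsilon_0]$, and restrict the double integral defining $-g'(t)$ to $x\in[g(t),\, g(t)+\epsilon_0/2]$ and $y\in[g(t)-\epsilon_0/2,\, g(t)]$. Since $g(t)\in[g_\infty,\, -h_0]$ for all $t$, for $t\ge T_1$ the interval $[g(t),\,g(t)+\epsilon_0/2]$ meets $[g_\infty,\,g_\infty+L]$ in a set of positive measure bounded below independently of $t$ (here one may need to also enlarge $L$ so that $L\ge |g_\infty|+h_0+\epsilon_0$, which is harmless), so on that overlap $u(t,x)\ge\frac12 W(x)\ge c_1>0$. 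This yields $-g'(t)\ge \mu\,c_2>0$ for all $t\ge T_1$, with $c_2$ independent of $t$, forcing $g(t)\to-\infty$ and contradicting $-g_\infty<+\infty$. The reverse implication follows by interchanging the roles of $g$ and $h$.

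The main obstacle is the bookkeeping to guarantee that the fixed lower-bound interval $[g_\infty,\,g_\infty+L]$ genuinely overlaps the shrinking strip $[g(t),\,g(t)+\epsilon_0/2]$ that drives $g'(t)$, uniformly in $t$; this is why $L$ must be chosen large relative to $|g_\infty|$, $h_0$ and $\epsilon_0$ rather than merely large enough to make the principal eigenvalue positive. Once the geometry is arranged, everything else is a direct repetition of the comparison-principle and principal-eigenvalue machinery already developed in Proposition~\ref{prop-single}, Proposition~\ref{prop-EV} and the proof of Theorem~\ref{thm-vanishing}.
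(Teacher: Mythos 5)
Your strategy is essentially the paper's own: argue by contradiction assuming $h_\infty=+\infty$ and $-g_\infty<+\infty$, use Proposition \ref{prop-EV} to find a fixed interval near the left front on which the principal eigenvalue is positive, compare $u$ from below with the fixed-domain problem (Lemma \ref{lemma-MP2}, Proposition \ref{prop-single}) to get a time-uniform positive lower bound there, and then feed this together with \textbf{(J)} into the equation for $g'(t)$ to conclude $-g'(t)\ge \mu c_2>0$ for all large $t$, contradicting the boundedness of $-g(t)$. This is exactly how the paper proves the lemma (it refers back to the first part of the proof of Theorem \ref{thm-vanishing}).

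One step, as written, is not correct and needs the paper's small adjustment. You run the comparison on $(g_\infty,\,g_\infty+L)$, but since $g(t)$ is strictly decreasing with limit $g_\infty$, we have $g(t)>g_\infty$ for every finite $t$, so this interval is never contained in $(g(t),h(t))$. On $[g_\infty,\,g(t))$ one has $u\equiv 0$, and there $u$ does not satisfy the supersolution inequality for the fixed-domain problem (the nonlocal integral term is strictly positive), so Lemma \ref{lemma-MP2} cannot be applied on $[g_\infty,\,g_\infty+L]$; indeed the asserted bound $u(t,x)\ge\frac12 W(x)$ is false for $x\in[g_\infty,\,g(t)]$. The repair is the one used in the paper: take the fixed interval to be $[g_\infty+\epsilon,\,g_\infty+L]$ with $0<\epsilon<\epsilon_0/2$ and work with $t\ge T_\epsilon$, where $g(t)<g_\infty+\epsilon$; then the interval lies in $(g(t),h(t))$ for all such $t$, the comparison is legitimate, and in your flux estimate the overlap of $[g(t),\,g(t)+\epsilon_0/2]$ with the region where the lower bound holds is $[g_\infty+\epsilon,\,g(t)+\epsilon_0/2]$, of length at least $\epsilon_0/2-\epsilon>0$, still giving $-g'(t)\ge\mu\delta_0 c_1\,\frac{\epsilon_0}{2}\bigl(\frac{\epsilon_0}{2}-\epsilon\bigr)>0$. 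Note also that with this fix your bookkeeping concern disappears: for the relevant times $g(t)$ lies within $\epsilon$ of $g_\infty$, so $L$ only needs to exceed the eigenvalue threshold and $\epsilon_0$, not $|g_\infty|+h_0$. With this adjustment your argument coincides with the paper's proof.
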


\begin{proof}
Arguing indirectly, we assume, without loss of generality,  that $h_\infty=+\infty$
and $-g_\infty<+\infty$.     By Proposition \ref{prop-EV},  there exists  $h_1>0$ such that $\lambda_p(\mathcal{L}_{(0, h_1)}+ f'(0)) > 0$. Moreover, for any $\epsilon>0$ small, there exists $ T_{\epsilon}>0$ such
that
$
h(t)>h_1, \ \ g(t)<g_\infty+\epsilon<0
$ for $t> T_{\epsilon}$.
In particular,
$$
\lambda_p(\mathcal{L}_{(g_\infty+\epsilon, h_1)}+ f'(0)) >\lambda_p(\mathcal{L}_{(0, h_1)}+ f'(0))  > 0.
$$
We now consider
\begin{equation*}
\left\{
\begin{aligned}
&w_t=d\int_{g_\infty + \epsilon}^{h_1}J(x-y)w(t,y)dy-dw +f(w),
& & t >  T_{\epsilon},~x\in [g_\infty + \epsilon, h_1],\\
& w( T_{\epsilon} ,x)=u(T_{\epsilon}, x),  & & x\in [g_\infty + \epsilon, h_1].
\end{aligned}
\right.
\end{equation*}
Similar to the first part of the proof  of Theorem \ref{thm-vanishing},   by choosing $\epsilon < \epsilon_0/2$, we have
$g'(t) <- c<0 $ for all $t$ large. This is a contradiction to  $-g_\infty<+\infty$.
\end{proof}

\begin{theorem}\label{thm-spreading}
If $h_\infty
-g_\infty= + \infty$, then $\lim_{t\rightarrow+\infty}u(t,x)=v_0$ locally uniformly in $\mathbb{R}$, where $v_0$ is the
unique positive zero of the function $f(u)$.
\end{theorem}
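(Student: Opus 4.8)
The plan is to squeeze $u(t,x)$ between two quantities that both tend to $v_0$: from above by comparison with the spatially homogeneous solution of the ODE $\bar u'=f(\bar u)$, and from below by comparison with solutions of the fixed–domain problem \eqref{fixedbdry} on a family of intervals $(-n,n)$ exhausting $\mathbb R$. A preliminary observation is that, although the hypothesis $h_\infty-g_\infty=+\infty$ does not literally assert that both endpoints escape to infinity, Lemma \ref{lemma-same} upgrades it to $h_\infty=+\infty$ and $g_\infty=-\infty$; hence $(g(t),h(t))\to\mathbb R$, and for each $n\in\mathbb N$ there is a time $T_n$ with $[-n,n]\subset(g(t),h(t))$ for all $t\ge T_n$.

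For the upper bound I would set $M_0:=\max\{\|u_0\|_\infty,K_0\}$ and let $\bar u$ solve $\bar u'=f(\bar u)$, $\bar u(0)=M_0$. Since $\bar u$ is independent of $x$ and $\int_{g(t)}^{h(t)}J(x-y)\,dy\le 1$, one checks immediately that $\bar u$ is a supersolution of the interior equation in \eqref{101}, with $\bar u(0)\ge u_0$ and $\bar u(t)\ge 0=u(t,g(t))=u(t,h(t))$; writing $f(\bar u)-f(u)=c(t,x)(\bar u-u)$ with $c\in L^\infty$ (by \textbf{(f1)}) and applying Lemma \ref{lemma-MP} (whose regularity hypotheses on $u,u_t$ come from Theorem \ref{Thm22}) then gives $u(t,x)\le\bar u(t)$ on $\overline\Omega_{g,h}$. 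Because $M_0>v_0$ and, under \textbf{(f1)--(f4)}, $f(s)<0$ for $s>v_0$, we have $\bar u(t)\downarrow v_0$, so $\limsup_{t\to+\infty}\sup_{x\in[g(t),h(t)]}u(t,x)\le v_0$.

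The lower bound is the core of the argument. Fix the constant $L$ provided by Proposition \ref{prop-whole} and take an integer $n>L/2$. With $T_n$ as above, let $w_n$ solve \eqref{fixedbdry} on $(-n,n)$ with initial time $T_n$ and initial value $u(T_n,\cdot)|_{[-n,n]}$, which is $\ge,\not\equiv 0$ since $u(T_n,\cdot)>0$ on $(g(T_n),h(T_n))\supset[-n,n]$. For $x\in[-n,n]$ and $t\ge T_n$, the inequality $u\ge 0$ gives $\int_{g(t)}^{h(t)}J(x-y)u(t,y)\,dy\ge\int_{-n}^{n}J(x-y)u(t,y)\,dy$, so $u$ is a supersolution of \eqref{fixedbdry} on $[-n,n]$ (no boundary condition is needed, the equation being nonlocal). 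Hence $v:=u-w_n$ satisfies on $[-n,n]\times(T_n,\infty)$ a differential inequality of the form $v_t\ge d\int_{-n}^{n}J(x-y)v(t,y)\,dy-dv+c(t,x)v$ with $v(T_n,\cdot)=0$ and $c\in L^\infty$, so Lemma \ref{lemma-MP2} yields $u\ge w_n$ on $[-n,n]$ for $t\ge T_n$. Since $n>L/2$, Proposition \ref{prop-whole} ensures $\lambda_p(\mathcal L_{(-n,n)}+f'(0))>0$, and then Proposition \ref{prop-single}, applied to the time–translated problem (legitimate because $f=f(u)$ is autonomous by \textbf{(f3)}), gives $w_n(t,\cdot)\to u_{(-n,n)}$ uniformly on $[-n,n]$. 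Therefore $\liminf_{t\to+\infty}\inf_{x\in K}u(t,x)\ge\inf_{x\in K}u_{(-n,n)}(x)$ for every compact $K\subset(-n,n)$; letting $n\to+\infty$ and using $u_{(-n,n)}\to v_0$ locally uniformly (again Proposition \ref{prop-whole}) we obtain $\liminf_{t\to+\infty}u(t,x)\ge v_0$ locally uniformly in $\mathbb R$. Combining this with the upper bound proves $\lim_{t\to+\infty}u(t,x)=v_0$ locally uniformly in $\mathbb R$.

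The step I expect to be the main obstacle is the lower–bound comparison: one must realize that restricting the integration domain from $(g(t),h(t))$ down to the fixed interval $(-n,n)$ only \emph{decreases} the dispersal term (because $u\ge 0$), so that $u$ stays a supersolution of the fixed–domain problem on $[-n,n]$ with no artificial boundary condition imposed; and the bookkeeping must be arranged so that the comparison is launched only after time $T_n$, once the free boundaries have enclosed $[-n,n]$, while $n$ is already large enough to force the relevant principal eigenvalue to be positive. Once this is in place, the convergence results recalled in Propositions \ref{prop-single} and \ref{prop-whole} finish the job, and the upper bound is an entirely routine ODE comparison.
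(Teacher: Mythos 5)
Your proposal is correct and follows essentially the same route as the paper: Lemma \ref{lemma-same} to upgrade to $h_\infty=-g_\infty=+\infty$, a lower bound via Lemma \ref{lemma-MP2} comparison with the fixed-domain problem \eqref{fixedbdry} on an exhausting family of intervals combined with Propositions \ref{prop-single} and \ref{prop-whole}, and an upper bound via the ODE $\hat u'=f(\hat u)$ together with the maximum principle. The only (immaterial) difference is that you compare on fixed intervals $[-n,n]$ after a waiting time $T_n$, whereas the paper compares on the intervals $[g(t_n),h(t_n)]$ starting at time $t_n$.
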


\begin{proof}
Thanks to Lemma \ref{lemma-same}, $h_\infty-g_\infty=+\infty$ implies that $h_{\infty} = -g_{\infty}= +\infty$. Choose an increasing sequence $\{ t_n\}_{n\geq 1}$ satisfying
$$
\lim_{n\rightarrow +\infty} t_n =   +\infty,\; \lambda_p(\mathcal{L}_{(g(t_n), h(t_n))}+ f'(0))>0 \mbox{ for all } n\geq 1.
$$

Denote $g_n = g(t_n)$, $h_n = h(t_n)$ and let $\underline u_n(t,x)$  be the unique solution of the following problem
\begin{equation}\label{single-tn}
\left\{
\begin{aligned}
& \underline u_t=d\int_{g_n}^{h_n}J(x-y)\underline u(t,y)dy
-d\underline u(t,x)+f(\underline u),&  & t>t_n,~x\in  [g_n,h_n],\\
& \underline u(t_n,x)= u(t_n,x),& &  x\in  [g_n,h_n].
\end{aligned}
\right.
\end{equation}
By Lemma \ref{lemma-MP2} and a comparison argument we have
\begin{equation}\label{pf-thm-tn>}
u(t,x)\ge\underline u_n(t,x)~\text{ in }~[t_n,+\infty)\times[g_n,h_n].
\end{equation}
Since
$
\lambda_p(\mathcal{L}_{[g_n, h_n]}+ f'(0)) > 0$, by
 Proposition \ref{prop-single}, (\ref{single-tn}) admits a unique positive steady state $\underline u_n(x)$ and
\begin{equation}\label{pf-thm-limit}
\lim_{t\rightarrow+\infty} \underline u_n (t,x)=\underline u_n (x)~\text{ uniformly in } \ [g_n,h_n].
\end{equation}

By Proposition \ref{prop-whole},
\[
\lim_{n\to\infty}  \underline u_n(x)  =v_0 \mbox{ locally uniformly in } x\in\mathbb R.
\]
It follows from this fact,  (\ref{pf-thm-tn>}) and (\ref{pf-thm-limit}) that
\begin{equation}\label{pf-thm-liminf}
\liminf_{t\rightarrow+\infty}u(t,x)\ge v_0~\text{ locally
uniformly in }~\mathbb{R}.
\end{equation}

To complete the proof, it remains to prove that
\begin{equation}\label{pf-thm-limsup}
\limsup_{t\rightarrow+\infty}u(t,x)\le v_0~\text{ locally
uniformly in }~\mathbb{R}.
\end{equation}
Let $\hat u(t)$ be the unique solution of the ODE problem
\[
\hat u'=f(\hat u),\; \hat u(0)=\|u_0\|_\infty.
\]
By Lemma \ref{lemma-MP-u} we have
$u(t,x)\le\hat u(t)$ for $t>0$ and $x\in[g(t),h(t)]$. Since $\hat u(t)\to v_0$ as $t\to\infty$, \eqref{pf-thm-limsup} follows immediately.
\end{proof}

Combining Theorems \ref{thm-vanishing} and \ref{thm-spreading}, we obtain
the following Spreading-Vanishing Dichotomy:

\begin{theorem}[Spreading-vanishing dichotomy]\label{thm-both}
 One of the following alternative must happen for \eqref{101}:
\begin{itemize}
\item[(i)] \underline{\rm Spreading:} $h_\infty=-g_\infty= + \infty$ and $\lim_{t
\rightarrow+\infty}u(t,x)=v_0$ locally uniformly  in
$\mathbb{R}$;
\item[(ii)] \underline{\rm Vanishing:} $h_\infty-g_\infty < + \infty $ and $\lim_{t
\rightarrow+\infty}u(t,x)=0$ uniformly in $[g(t),h(t)]$.
\end{itemize}
\end{theorem}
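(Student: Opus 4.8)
The plan is to obtain the dichotomy directly from Lemma \ref{lemma-same} together with Theorems \ref{thm-vanishing} and \ref{thm-spreading}, by a simple case distinction. Since $h(t)$ and $-g(t)$ are nondecreasing in $t$, the limits $h_\infty=\lim_{t\to+\infty}h(t)\in(h_0,+\infty]$ and $g_\infty=\lim_{t\to+\infty}g(t)\in[-\infty,-h_0)$ are always well defined in the extended reals. Lemma \ref{lemma-same} then says that $h_\infty<+\infty$ holds if and only if $-g_\infty<+\infty$; consequently exactly one of the following occurs: either $h_\infty-g_\infty<+\infty$ (equivalently, both $h_\infty$ and $-g_\infty$ are finite), or $h_\infty=-g_\infty=+\infty$ (equivalently, $h_\infty-g_\infty=+\infty$).

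First I would treat the case $h_\infty-g_\infty<+\infty$. Here Theorem \ref{thm-vanishing} applies and yields $u(t,x)\to0$ uniformly for $x\in[g(t),h(t)]$ as $t\to+\infty$, which is precisely alternative (ii). The additional conclusion $\lambda_p(\mathcal L_{(g_\infty,h_\infty)}+f'(0))\le0$ provided by Theorem \ref{thm-vanishing} is not needed for the statement of the dichotomy itself, although it will be used later for the spreading--vanishing criteria.

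Next I would treat the complementary case $h_\infty-g_\infty=+\infty$. By Lemma \ref{lemma-same} this forces $h_\infty=-g_\infty=+\infty$, so that $\lim_{t\to+\infty}(g(t),h(t))=\mathbb{R}$. Theorem \ref{thm-spreading} then gives $\lim_{t\to+\infty}u(t,x)=v_0$ locally uniformly in $\mathbb{R}$, where $v_0$ is the unique positive zero of $f$; this is alternative (i). Since the two cases are mutually exclusive and together exhaust all possibilities, exactly one of (i) and (ii) holds.

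I do not expect any real obstacle: all the analytic content has already been established in Theorems \ref{thm-vanishing} and \ref{thm-spreading} and in Lemma \ref{lemma-same}. The only point requiring a moment's care is to record the monotonicity of $h$ and $-g$, which guarantees that $h_\infty$ and $g_\infty$ exist, so that ``$h_\infty-g_\infty<+\infty$'' versus ``$h_\infty-g_\infty=+\infty$'' is a genuine dichotomy once Lemma \ref{lemma-same} rules out the mixed situation in which one free boundary remains bounded while the other escapes to infinity.
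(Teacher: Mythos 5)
Your proposal is correct and is essentially the paper's own argument: the paper derives the dichotomy by combining Theorem \ref{thm-vanishing} and Theorem \ref{thm-spreading}, with Lemma \ref{lemma-same} excluding the mixed case, exactly as you do. Nothing further is needed.
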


Next we look for criteria guaranteeing spreading or vanishing for \eqref{101}. From Proposition \ref{prop-EV} we see that if
\begin{equation}\label{d-big}
f'(0)\geq d,
\end{equation}
then $\lambda_p(\mathcal{L}_{(\ell_1,\ell_2)}+ f'(0)) > 0$ for any finite interval $(\ell_1,\ell_2)$. Combining this with Theorem \ref{thm-vanishing} we
immediately obtain the following conclusion:

 \begin{theorem}\label{f'(0)>d}
  When \eqref{d-big} holds,  spreading always happens for \eqref{101}.
\end{theorem}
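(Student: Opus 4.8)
The plan is to argue by contradiction, using the spreading--vanishing dichotomy together with the monotonicity and limiting behaviour of the principal eigenvalue established in Proposition \ref{prop-EV}. First I would record the relevant consequence of that proposition: since $f'(0)>0$ by {\bf (f4)}, for every bounded open interval $(\ell_1,\ell_2)$ the quantity $\lambda_p(\mathcal{L}_{(\ell_1,\ell_2)}+f'(0))$ is a well-defined principal eigenvalue which depends only on $\ell=\ell_2-\ell_1$; by part (i) it is strictly increasing in $\ell$, and by part (iii) it tends to $f'(0)-d$ as $\ell\to 0^+$. Consequently, for every finite $\ell>0$ one has the strict lower bound $\lambda_p(\mathcal{L}_{(\ell_1,\ell_2)}+f'(0))>f'(0)-d\ge 0$, where the last inequality is precisely the hypothesis \eqref{d-big}. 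In particular, $\lambda_p(\mathcal{L}_{(\ell_1,\ell_2)}+f'(0))>0$ for every bounded interval $(\ell_1,\ell_2)$.

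Next, suppose for contradiction that spreading does not occur for \eqref{101}. By the spreading--vanishing dichotomy (Theorem \ref{thm-both}), vanishing must then occur, so in particular $h_\infty-g_\infty<+\infty$, i.e. $(g_\infty,h_\infty)$ is a bounded interval. Applying Theorem \ref{thm-vanishing} to this situation gives $\lambda_p(\mathcal{L}_{(g_\infty,h_\infty)}+f'(0))\le 0$. This contradicts the strict lower bound from the first step, applied with $(\ell_1,\ell_2)=(g_\infty,h_\infty)$. Hence the vanishing alternative is impossible, and by the dichotomy spreading always happens, which is the assertion of the theorem.

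I do not expect any real obstacle here: all the substantive work---the eigenvalue monotonicity and small-interval limit, and the fact that vanishing forces nonpositivity of the principal eigenvalue on the limiting interval---has already been done in Proposition \ref{prop-EV} and Theorem \ref{thm-vanishing}, so this statement is essentially an immediate corollary. The only points requiring a little care are to apply the eigenvalue comparison with the additive constant taken to be $a_0=f'(0)>0$ (so that Proposition \ref{prop-EV} is applicable) and on the genuinely bounded interval $(g_\infty,h_\infty)$, which is legitimate exactly because the vanishing alternative guarantees $h_\infty-g_\infty<+\infty$.
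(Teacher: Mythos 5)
Your proposal is correct and follows essentially the same route as the paper: Proposition \ref{prop-EV} (strict monotonicity in the interval length together with the small-interval limit $f'(0)-d\ge 0$) gives $\lambda_p(\mathcal{L}_{(\ell_1,\ell_2)}+f'(0))>0$ on every finite interval, which is incompatible with the conclusion $\lambda_p(\mathcal{L}_{(g_\infty,h_\infty)}+f'(0))\le 0$ of Theorem \ref{thm-vanishing} in the vanishing case, so the dichotomy forces spreading. The paper states this as an immediate consequence of those two results, exactly as you argue.
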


We next consider the case
\begin{equation}\label{d-small}
f'(0)< d,
\end{equation}
In this case, by Proposition \ref{prop-EV}, there exists $\ell^*>0$ such that
\[
\lambda_p(\mathcal{L}_{I}+ f'(0))=0 \mbox{ if } |I|=\ell^*,\; \lambda_p(\mathcal{L}_I+ f'(0))<0 \mbox{ if } |I|<\ell^*,\; \lambda_p(\mathcal{L}_I+ f'(0))>0 \mbox{ if } |I|>\ell^*,
\]
where $I$ stands for a finite open interval in $\mathbb R$, and $|I|$ denotes its length.

\begin{theorem}\label{thm-mu-small} Suppose that \eqref{d-small} holds.
If $h_0\geq \ell^*/2$ then spreading always happens for \eqref{101}. If $h_0<\ell^*/2$,  then there
exists $\underline\mu>0$ such that vanishing happens for \eqref{101}  if
$0<\mu\le\underline\mu$.
\end{theorem}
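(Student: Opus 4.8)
The plan is to treat the two cases separately. For $h_0\ge\ell^*/2$: since $h(t)$ is increasing and $g(t)$ is decreasing, and $h,-g\in\mathbb H_{h_0,t}$ for every $t>0$ by Theorem \ref{Thm22}, these functions are strictly monotone, so $h(t)-g(t)>2h_0\ge\ell^*$ for every $t>0$, and hence $h_\infty-g_\infty>\ell^*$. If vanishing occurred we would have $h_\infty-g_\infty<+\infty$; then the strict monotonicity and continuity of $\lambda_p(\mathcal L_{(\ell_1,\ell_2)}+f'(0))$ in $\ell:=\ell_2-\ell_1$ from Proposition \ref{prop-EV}(i), together with $\lambda_p(\mathcal L_I+f'(0))=0$ for $|I|=\ell^*$, would give $\lambda_p(\mathcal L_{(g_\infty,h_\infty)}+f'(0))>0$, contradicting Theorem \ref{thm-vanishing}. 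Hence by the dichotomy (Theorem \ref{thm-both}) spreading happens.

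For $h_0<\ell^*/2$, the plan is to build a space-bounded, time-decaying upper solution and invoke the comparison principle. Fix $h_1\in(h_0,\ell^*/2)$, so that $\lambda_1:=\lambda_p(\mathcal L_{(-h_1,h_1)}+f'(0))<0$, and let $\phi_1\in C([-h_1,h_1])$ be a corresponding principal eigenfunction, which --- a feature of the nonlocal operator --- may be taken strictly positive on the closed interval $[-h_1,h_1]$. Set $\gamma:=h_1-h_0>0$, choose $\delta\in(0,|\lambda_1|]$ and $M\ge\|u_0\|_\infty/\min_{[-h_0,h_0]}\phi_1$, and define
\[
\overline h(t):=h_0+\gamma\bigl(1-e^{-\delta t}\bigr),\qquad \overline g(t):=-\overline h(t),\qquad \overline u(t,x):=Me^{-\delta t}\phi_1(x).
\]
Then $\overline h(0)=h_0$, $\overline h$ is increasing with $\overline h(t)<h_1$ for all $t\ge0$, $[\overline g(t),\overline h(t)]\subset[-h_1,h_1]$ so that $\overline u$ is well defined on $\overline\Omega_{\overline g,\overline h}$ with $\overline u(t,\overline g(t)),\overline u(t,\overline h(t))>0$, and $\overline u(0,\cdot)\ge u_0$ on $[-h_0,h_0]$.

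It remains to verify that $(\overline u,\overline g,\overline h)$ is an upper solution in the sense of Theorem \ref{thm-CP} once $\mu$ is small. For the interior inequality, $\overline u_t=-\delta\overline u$, while using the eigenvalue identity $d\int_{-h_1}^{h_1}J(x-y)\phi_1(y)\,dy=(d-f'(0)+\lambda_1)\phi_1(x)$, the nonnegativity of $J\phi_1$ (restricting the integral from $(-h_1,h_1)$ to $(\overline g(t),\overline h(t))$ only decreases it) and $f(u)\le f'(0)u$ for $u\ge0$ (a consequence of \textbf{(f3)}--\textbf{(f4)}), the right-hand side of the equation is at most $\lambda_1\overline u$; since $\delta\le|\lambda_1|$ we obtain $\overline u_t\ge\lambda_1\overline u$. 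For the free-boundary inequality at $x=\overline h(t)$, the left side equals $\gamma\delta e^{-\delta t}$, and the right side is at most $\mu Me^{-\delta t}\|\phi_1\|_\infty(2h_1)\cdot\tfrac12=\mu Me^{-\delta t}\|\phi_1\|_\infty h_1$, because $\int_{\overline h(t)}^{+\infty}J(x-y)\,dy\le\int_0^\infty J(z)\,dz=\tfrac12$ whenever $x\le\overline h(t)$; hence it holds as soon as $\mu\le\underline\mu:=\gamma\delta/(M\|\phi_1\|_\infty h_1)$, and the inequality at $x=\overline g(t)$ follows identically. As $\gamma,\delta,M,\|\phi_1\|_\infty,h_1$ are all fixed before $\mu$ is chosen, $\underline\mu>0$. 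The comparison principle (Theorem \ref{thm-CP}) then yields $h(t)\le\overline h(t)<h_1$ and $g(t)\ge\overline g(t)>-h_1$ for all $t>0$, so $h_\infty-g_\infty\le2h_1<\ell^*<+\infty$, and vanishing follows from Theorem \ref{thm-vanishing}.

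The main obstacle is the second case: one must arrange that the mass lost across the moving fronts, which is $O(\mu)$, stays below the prescribed front speed $\gamma\delta e^{-\delta t}$ for all time while $\overline h(t)$ remains trapped below $\ell^*/2$. This forces $\overline u$ to decay exponentially in $t$, which is possible precisely because $h_1<\ell^*/2$ makes $\lambda_1<0$; the remaining care is the bookkeeping confirming that $\underline\mu$ does not depend on $\mu$, and checking the boundary positivity of $\overline u$ (automatic here, thanks to the nonlocal structure) so that no cut-off of $\phi_1$ is needed.
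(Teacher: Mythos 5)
Your proof is correct, and for both cases it follows the same strategy as the paper: the $h_0\ge\ell^*/2$ case is argued exactly as in the paper (monotonicity of the fronts plus the monotonicity of $\lambda_p$ in the interval length against Theorem \ref{thm-vanishing}), and the $h_0<\ell^*/2$ case rests on the same key ingredients, namely the negativity of $\lambda_1=\lambda_p(\mathcal L_{(-h_1,h_1)}+f'(0))$ for $h_1\in(h_0,\ell^*/2)$ and the comparison principle of Theorem \ref{thm-CP}. The one genuine difference is in how the upper solution is assembled: the paper takes the density component to be the actual solution $\hat w$ of the truncated fixed-domain problem on $[-h_1,h_1]$, bounds it by $C_1e^{\lambda_1 t/4}\phi_1$ via Lemma \ref{lemma-MP2}, and lets the comparison boundaries $\hat h=-\hat g$ have total displacement proportional to $\mu$, so that smallness of $\mu$ is what keeps them inside $(-h_1,h_1)$; you instead use the explicit profile $Me^{-\delta t}\phi_1$ directly as the density, fix the boundary curve $\overline h(t)=h_0+\gamma(1-e^{-\delta t})$ independently of $\mu$, and use smallness of $\mu$ only to verify the Stefan-type flux inequalities, where the matched exponential rates make the condition time-independent. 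Your variant is slightly more economical (it bypasses the auxiliary initial-value problem and Lemma \ref{lemma-MP2}), while the paper's version has the mild advantage of not needing to check the interior differential inequality for an ansatz, since $\hat w$ solves the equation on the larger fixed interval by construction; both yield an explicit admissible threshold $\underline\mu$.
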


\begin{proof} If $h_0\geq \ell^*/2$ and vanishing happens, then $[g_\infty, h_\infty]$ is a finite interval with length strictly bigger than $2h_0\geq \ell^*$.
Therefore $\lambda_p(\mathcal{L}_{(g_\infty, h_\infty)}+ f'(0))>0$, contradicting the conclusion in Theorem \ref{thm-vanishing}. Thus when $h_0\geq \ell^*/2$,
spreading always happens for \eqref{101}.

We now consider the case $ h_0<\ell^*/2$. We fix   $h_1\in (h_0, \ell^*/2)$ and consider the
following problem
\begin{equation}\label{single-h1}
\left\{
\begin{aligned}
&w_t(t,x)=d\int_{-h_1}^{h_1}J(x-y)w(t,y)dy-dw +f(w),
& &t>0,~x\in [- h_1, h_1],\\
&w(0,x)=u_0 (x), & &  x\in [- h_0, h_0], \\
& w(0,x ) = 0,   & &  x\in [- h_1, -h_0) \cup ( h_0, h_1]
\end{aligned}
\right.
\end{equation}
and  denote its unique solution by $\hat w(t,x)$.
The choice of $h_1$ guarantees that
$$
\lambda_1 := \lambda_p(\mathcal{L}_{(-h_1, h_1)}+ f'(0)) < 0.
$$
Let $\phi_1>0$ be the corresponding normalized eigenfunction of $\lambda_1$, namely $\|\phi_1\|_\infty=1$ and
\[
(\mathcal{L}_{(-h_1, h_1)}+ f'(0))[\,\phi_1](x)=\lambda_1\phi_1(x) \mbox{ for } x\in [-h_1, h_1].
\]

By \textbf{(f3)-(f4)}, one has
\begin{eqnarray*}
\hat w_t(t,x) &= & d\int_{-h_1}^{h_1}J(x-y)\hat w(t,y)dy-d\hat w +f(\hat w)\\
        & \leq & d\int_{-h_1}^{h_1}J(x-y)\hat w(t,y)dy-d\hat w +f'(0) \hat w.
\end{eqnarray*}
On the other hand, for $C_1>0$ and $w_1 = C_1 e^{\lambda_1 t/4} \phi _{1}$  it is easy to check that
\begin{eqnarray*}
&&    d\int_{-h_1}^{h_1}J(x-y)w_1(t,y)dy  -  d w_1  + f'(0) w_1 - w_{1t}(t,x)\\
        & = & C_1 e^{\lambda_1 t/4 }\left \{  d\int_{-h_1}^{h_1}J(x-y)  \phi _1 (y)dy-d  \phi _1 +f'(0)  \phi _1  - {\lambda_1\over 4} \phi _1 \right\}\\
        & = &{3\lambda_1\over 4}C_1 e^{\lambda_1 t/4 } \phi _1<0.
\end{eqnarray*}
 Choose
$C_1>0$ large such that $C_1\phi _1> u_0$ in $[-h_1, h_1]$.  Then we can apply Lemma \ref{lemma-MP2} to $w_1-\hat w$ to deduce
\begin{equation}\label{pf-thm-w-decay}
\hat w (t,x)\le w_1 (t,x) = C_1 e^{\lambda_1 t/4} \phi _1 \leq C_1 e^{\lambda_1 t/4} ~\text{ for   }
~t>0~\text{and}~x\in[-h_1, h_1].
\end{equation}

Now define
\begin{equation*}
\hat h(t)=h_0+ 2\mu  h_1  C_1 \int_0^t  e^{\lambda_1 s/4}ds~\text{ and }~\hat g(t)=-\hat
h(t)~\text{ for }~t\ge0,
\end{equation*}
We claim that {\it $(\hat w, \hat h, \hat g)$ is an upper solution of (\ref{101})}.

Firstly, we compute that for any $t>0$,
$$
\hat h(t)=h_0 - 2 \mu h_1 C_1  \frac{4}{\lambda_1 }\left( 1-e^{\lambda_1 t/4} \right) <   h_0 - 2 \mu h_1  C_1  \frac{4}{\lambda_1 }  \leq   h_1
$$
provided that
$$
0< \mu\leq \underline{\mu}:=  \frac{-\lambda_1 (h_1- h_0) }{8h_1  C_1}.
$$
Similarly, $\hat  g(t) > -h_1 $ for any $t>0$. Thus (\ref{single-h1}) gives that
$$
\hat w_t(t,x) \geq d\int_{\hat  g(t)}^{\hat  h(t)}J(x-y)\hat w(t,y)dy-d \hat w +f(\hat w)
 \ \ \  \textrm{for}\  t>0,~x\in [\hat  g(t),\hat  h(t)].
$$
Secondly, due to (\ref{pf-thm-w-decay}),  it is easy to check that
\begin{eqnarray*}
\int_{\hat g(t)}^{\hat h(t)}\int_{\hat h(t)}^{+\infty}J(x-y)\hat w(t,x) dydx  <  2 h_1 C_1 e^{\lambda_1 t/4}.
\end{eqnarray*}
Thus
$$
\hat h' (t)= 2  \mu h_1 C_1    e^{\lambda_1 t/4}  >  \mu \int_{\hat g(t)}^{\hat h(t)}\int_{\hat h(t)}^{+\infty}J(x-y)\hat w(t,x) dydx.
$$
Similarly, one has
$$
\hat g' (t) <-  \mu \int_{\hat g(t)}^{\hat h(t)}\int_{-\infty}^{\hat g(t)}  J(x-y)\hat w(t,x) dydx.
$$

Now it is clear that $(\hat w, \hat h, \hat g)$ is an upper solution of (\ref{101}).
Hence, by Theorem \ref{thm-CP}, we have
$$
u(t,x ) \leq \hat w (t,x),\ g(t) \geq \hat g(t)\ \textrm{and}\  h(t)\leq \hat h(t) \ \ \textrm{for} \ t>0,\ x\in [g(t), h(t)].
$$
Therefore
$$
h_\infty-g_\infty\le\lim_{t\rightarrow+\infty}\left(\hat h(t) -\hat g(t)\right)\leq 2 h_1 < + \infty.
$$
This completes the proof.
\end{proof}

\begin{theorem}\label{thm-mu-large} Suppose that \eqref{d-small} holds and $h_0<\ell^*/2$.
 Then there exists  $\bar\mu > 0$ such
that spreading happens to \eqref{101} if $\mu > \bar\mu$.
\end{theorem}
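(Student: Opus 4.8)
The plan is to show that for large $\mu$ the free fronts move fast enough to force $h^\mu_\infty-g^\mu_\infty>\ell^*$. Indeed, if vanishing occurred then, by Theorem~\ref{thm-both}(ii), $(g^\mu_\infty,h^\mu_\infty)$ would be a finite interval, so Theorem~\ref{thm-vanishing} would give $\lambda_p(\mathcal{L}_{(g^\mu_\infty,h^\mu_\infty)}+f'(0))\le0$, i.e. $h^\mu_\infty-g^\mu_\infty\le\ell^*$ by the strict monotonicity of $\lambda_p$ in the length and the definition of $\ell^*$; hence $h^\mu_\infty-g^\mu_\infty>\ell^*$ excludes vanishing, and by Theorem~\ref{thm-both} spreading occurs. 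Since $g^\mu$ is decreasing, $g^\mu(T)\le-h_0$ for every $T>0$, so it suffices to produce $\bar\mu>0$ and a time $T_0>0$, with $T_0$ \emph{independent of $\mu$}, such that $h^\mu(T_0)\ge h_0+\ell^*$ whenever $\mu>\bar\mu$; then $h^\mu_\infty-g^\mu_\infty\ge h^\mu(T_0)-g^\mu(T_0)\ge2h_0+\ell^*>\ell^*$.

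Let $M_0:=\max\{\|u_0\|_\infty,K_0\}$, so $0<u^\mu\le M_0$ by \eqref{v-bound}, let $C_*:=d+K(M_0)$, so that by {\bf (f1)} one has $-du^\mu+f(u^\mu)\ge-C_*u^\mu$ along the solution, and fix $\epsilon_0\in(0,h_0/4)$ and $\delta_0>0$ with $J\ge\delta_0$ on $[-\epsilon_0,\epsilon_0]$. Put $\ell_k:=h_0+k\epsilon_0/4$, fix $N\in\mathbb N$ with $N\epsilon_0/4\ge\ell^*$ and $T_0:=1$, and set $I_0:=[h_0-\epsilon_0/2,\,h_0-\epsilon_0/4]$ and $I_k:=[\ell_{k-1},\ell_k]$ for $k\ge1$. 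I claim, by induction on $k=0,1,\dots,N$: there is a constant $m_k>0$, independent of $\mu$, and a threshold $\mu_k>0$, such that for all $\mu\ge\mu_k$ one has (a) $h^\mu(kT_0/N)\ge\ell_k$, and (b) $u^\mu(kT_0/N,\cdot)\ge m_k$ on $I_k$. For $k=0$ this is immediate with $m_0:=\min_{I_0}u_0>0$ and $\mu_0:=1$, since $h^\mu(0)=h_0=\ell_0$ and $u^\mu(0,\cdot)=u_0$.

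For the step $k\to k+1$ one argues in two parts. \emph{(i) The front reaches $\ell_{k+1}$ quickly.} While $h^\mu(t)\le\ell_{k+1}$ on $[kT_0/N,(k+1)T_0/N]$, the set $I_k$ remains in $(g^\mu(t),h^\mu(t))$ and there $u^\mu_t\ge-C_*u^\mu$, whence $u^\mu(t,\cdot)\ge e^{-C_*T_0/N}m_k$ on $I_k$; substituting this into the flux identity $(h^\mu)'(t)=\mu\int_{g^\mu(t)}^{h^\mu(t)}\int_{h^\mu(t)}^{\infty}J(x-y)\,u^\mu(t,x)\,dy\,dx$ and keeping only $x\in I_k$, $y\in(h^\mu(t),\ell_{k+1}+\epsilon_0/4)$ — a range where $|x-y|\le\epsilon_0$, so $J\ge\delta_0$ — gives $(h^\mu)'(t)\ge\mu\gamma_k$ with $\gamma_k:=\delta_0\epsilon_0^2e^{-C_*T_0/N}m_k/16>0$, a $\mu$-independent constant. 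Hence $h^\mu$ first attains $\ell_{k+1}$ at some $t^*\le kT_0/N+\epsilon_0/(4\mu\gamma_k)$, so for $\mu\ge N\epsilon_0/(2T_0\gamma_k)$ we have $t^*\le kT_0/N+T_0/(2N)<(k+1)T_0/N$, which is (a) for $k+1$ by monotonicity of $h^\mu$. \emph{(ii) The swept sliver refills.} Each $x\in I_{k+1}=[\ell_k,\ell_{k+1}]$ lies in the domain from time $\tau_x:=\max\{t_x,kT_0/N\}$ on, where $t_x\le t^*$ is the time the front passes $x$; one checks $u^\mu(\tau_x,x)\ge0$ and $(k+1)T_0/N-\tau_x\ge T_0/(2N)$. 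For $t\in[\tau_x,(k+1)T_0/N]$, retaining the $I_k$-part of the nonlocal term (again $|x-y|\le\epsilon_0$) yields $u^\mu_t\ge A_k-C_*u^\mu$ with $A_k:=d\delta_0(\epsilon_0/4)e^{-C_*T_0/N}m_k>0$, so ODE comparison from $u^\mu(\tau_x,x)\ge0$ gives $u^\mu((k+1)T_0/N,\cdot)\ge(A_k/C_*)(1-e^{-C_*T_0/(2N)})=:m_{k+1}>0$ on $I_{k+1}$, a bound independent of $\mu$. This proves (b) for $k+1$; put $\mu_{k+1}:=\max\{\mu_k,\,N\epsilon_0/(2T_0\gamma_k)\}$.

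Taking $\bar\mu:=\mu_N$, which is finite since there are only $N$ steps and each $m_k$ and $\gamma_k$ is a fixed positive number, we obtain $h^\mu(T_0)\ge\ell_N\ge h_0+\ell^*$ for all $\mu\ge\bar\mu$, and spreading follows as in the first paragraph. The genuinely delicate point — and the reason the induction is built around a \emph{fixed} time budget $T_0$ instead of the $O(1/\mu)$ time scale on which the front naturally moves — is keeping the $m_k$ bounded away from $0$ uniformly in $\mu$: a freshly uncovered sliver must be allotted a $\mu$-independent span of time ($\ge T_0/(2N)$) in which nonlocal diffusion from the region behind it rebuilds $u^\mu$ there before that sliver becomes the source for the next step, which is exactly what part (ii) arranges. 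The routine bootstrap verifications (that $h^\mu$ first meets $\ell_{k+1}$ at a well-defined $t^*\le(k+1)T_0/N$, and that the intervals involved remain in the domain) are omitted.
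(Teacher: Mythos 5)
Your argument is correct, and it takes a genuinely different route from the paper. The paper proves the statement by contradiction: assuming vanishing for every $\mu$, it invokes Theorem \ref{thm-vanishing} to get $h_{\mu,\infty}-g_{\mu,\infty}\le \ell^*$, uses the monotonicity in $\mu$ from Corollary \ref{corollary-mu-increasing} to define the limit fronts $H_\infty, G_\infty$, and then integrates the flux law for $h_\mu$ over $(t_1,\infty)$, bounding the integrand from below by the fixed solution with $\mu=\mu_1$ (via $J\ge\delta_0$ near $0$); this forces $\mu\le C(\mu_1)\,\ell^*$, a contradiction for large $\mu$, and incidentally yields an explicit $\bar\mu$. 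You instead give a direct, quantitative front-propagation argument: within a fixed time $T_0=1$ and for $\mu$ large, the right front advances by at least $\ell^*$, achieved by an induction over slivers of width $\epsilon_0/4$ in which the key point is exactly the one you flag — the lower bounds $m_k$ on $u^\mu$ behind the front must be kept uniform in $\mu$, which your part (ii) secures by allotting each freshly uncovered sliver a $\mu$-independent refill time $T_0/(2N)$ and using the source term $A_k-C_*u$ coming from the nonlocal coupling with $I_k$. The arithmetic ($|x-y|\le 3\epsilon_0/4$ in both coupling steps, $\gamma_k$, $A_k$, and the threshold $N\epsilon_0/(2T_0\gamma_k)$) checks out, and the omitted routine points do go through: in particular, hypothesis (b) at step $k$ forces $h^\mu(kT_0/N)>\ell_k$, so $I_k$ stays strictly inside the domain, and strict monotonicity of $h^\mu$ after $t^*$ makes the bound at the endpoint $\ell_{k+1}$ legitimate. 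Compared with the paper, your proof is longer but avoids Corollary \ref{corollary-mu-increasing} and the limits $H_\infty,G_\infty$ entirely, and it gives slightly more information: an explicit $\bar\mu$ together with a uniform-in-$\mu$ time ($t=1$) by which the range already exceeds $\ell^*$; the paper's contradiction argument is shorter but only controls the behavior as $t\to\infty$.
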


\begin{proof}
Suppose that  for any $\mu >0$,   $h_\infty-g_\infty<+\infty$. We will derive a contradiction.

First of all, notice that by Theorem \ref{thm-vanishing}, we have $\lambda_p(\mathcal{L}_{(g_\infty, h_\infty)}+ f'(0))\leq 0.$ This indicates that $   h_\infty-g_\infty \leq \ell^* $. To stress the dependence on $\mu$,  let $(u_{\mu}, g_{\mu}, h_{\mu})$ denote the solution of  (\ref{101}).
By Corollary \ref{corollary-mu-increasing}, $u_{\mu},  -g_{\mu},  h_{\mu}$  are increasing in $\mu>0$. Also denote
$$
h_{\mu, \infty} : = \lim_{t\rightarrow +\infty}  h_{\mu}(t), \ \ g_{\mu, \infty} : = \lim_{t\rightarrow +\infty}  g_{\mu}(t).
$$
Obviously, both $h_{\mu, \infty} $ and $- g_{\mu, \infty}$ are increasing in $\mu$. Denote
$$
H_{\infty} := \lim_{\mu \rightarrow +\infty}h_{\mu, \infty},\ \ G_{\infty} := \lim_{\mu \rightarrow +\infty}g_{\mu, \infty}.
$$

Recall   that since $J(0)>0$, there exist $\epsilon_0>0$ and $\delta_0>0$  such that $J(x)> \delta_0$ if $|x| < \epsilon_0$. Then there exist $\mu_1$, $t_1$ such that for $\mu\geq \mu_1$, $t\geq t_1$, we have $h_{\mu} (t)> H_{\infty}-\epsilon_0/4$. Thus it follows that
\begin{eqnarray*}
\mu &=& \left( \int_{t_1}^{+\infty} \int_{g_{\mu}(\tau)}^{h_{\mu}(\tau)}\int_{h_{\mu}(\tau)}^{+\infty}
J(x-y)u_{\mu}(\tau,x)dydxd\tau \right)^{-1} \left[h_{\mu, \infty} - h_{\mu}(t_1) \right]\\
  &\leq &\left( \int_{t_1}^{t_1+1} \int_{g_{\mu_1}(\tau)}^{h_{\mu_1}(\tau)}\int_{h_{\mu_1}(\tau)+\epsilon_0/4}^{+\infty}
J(x-y)u_{\mu_1}(\tau,x)dydxd\tau \right)^{-1} \ell^*\\
&\leq &\left(\delta_0 \int_{t_1}^{t_1+1} \int_{h_{\mu_1}(\tau) - \epsilon_0/2}^{h_{\mu_1}(\tau)}\int_{h_{\mu_1}(\tau)+\epsilon_0/4}^{h_{\mu_1}(\tau)+\epsilon_0/2}
 u_{\mu_1}(\tau,x)dydxd\tau \right)^{-1} \ell^*\\
   &=  &   \left({1\over 4} \delta_0\epsilon_0\int_{t_1}^{t_1+1} \int_{h_{\mu_1}(\tau) - \epsilon_0/2}^{h_{\mu_1}(\tau)}
  u_{\mu_1}(\tau,x) dxd\tau \right)^{-1} \ell^*< +\infty,
\end{eqnarray*}
which clearly is a contradiction.

The above argument also shows that we can take
$$
\bar\mu :=\max\left\{1, \left({1\over 4} \delta_0\epsilon_0\int_{t_1}^{t_1+1} \int_{h_{\mu_1}(\tau) - \epsilon_0/2}^{h_{\mu_1}(\tau)}
  u_{\mu_1}(\tau,x) dxd\tau \right)^{-1} \ell^*\right\}.
$$
\end{proof}

Below is a sharp criteria in terms of $\mu$ for the spreading-vanishing dichotomy.

\begin{theorem}\label{thm-mu-critical} Suppose that \eqref{d-small} holds and $h_0<\ell^*/2$.
 Then there
exists $\mu^*>0$ such that  vanishing   happens for \eqref{101} if $0<\mu\le\mu^*$
and  spreading  happens for \eqref{101} if $\mu>\mu^*$.
\end{theorem}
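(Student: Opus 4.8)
The plan is to let $\Sigma\subset(0,\infty)$ be the set of those $\mu$ for which vanishing happens for \eqref{101}, and to show that $\mu^*:=\sup\Sigma$ has the stated properties. First I would check that $\mu^*$ is well defined and positive: since $h_0<\ell^*/2$, Theorem \ref{thm-mu-small} provides $\underline\mu>0$ with $(0,\underline\mu]\subset\Sigma$, so $\Sigma\neq\emptyset$; and Theorem \ref{thm-mu-large} provides $\bar\mu>0$ such that spreading happens for every $\mu>\bar\mu$, whence $\Sigma\subset(0,\bar\mu]$. Thus $\mu^*\in[\underline\mu,\bar\mu]$.

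Next I would show that $\Sigma$ is downward closed: if $\mu_1\in\Sigma$ and $0<\mu_2<\mu_1$, then by Corollary \ref{corollary-mu-increasing} we have $h^{\mu_2}(t)\le h^{\mu_1}(t)$ and $g^{\mu_2}(t)\ge g^{\mu_1}(t)$ for all $t>0$, so $h^{\mu_2}_\infty-g^{\mu_2}_\infty\le h^{\mu_1}_\infty-g^{\mu_1}_\infty<\infty$, and vanishing also happens for $\mu_2$, i.e. $\mu_2\in\Sigma$. Together with the definition of $\mu^*$ this gives $(0,\mu^*)\subset\Sigma$; and since $\Sigma\subset(0,\mu^*]$, every $\mu>\mu^*$ lies outside $\Sigma$, so by the dichotomy (Theorem \ref{thm-both}) spreading happens for every $\mu>\mu^*$.

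The remaining, and main, point is to prove $\mu^*\in\Sigma$, i.e. that vanishing still occurs at the critical value. For $\mu\in(0,\mu^*)$ we have $\mu\in\Sigma$, so Theorem \ref{thm-vanishing} gives that $(g^\mu_\infty,h^\mu_\infty)$ is a finite interval with $\lambda_p(\mathcal{L}_{(g^\mu_\infty,h^\mu_\infty)}+f'(0))\le 0$; since $f'(0)<d$, the definition of $\ell^*$ forces $h^\mu_\infty-g^\mu_\infty\le\ell^*$. By Corollary \ref{corollary-mu-increasing}, for each fixed $t$ both $\mu\mapsto h^\mu(t)$ and $\mu\mapsto -g^\mu(t)$ are non-decreasing, and by the continuous dependence of the unique solution of \eqref{101} on the parameter $\mu$ over any compact time interval (as used in the proof of Theorem \ref{thm-CP}), $h^\mu(t)\to h^{\mu^*}(t)$ and $g^\mu(t)\to g^{\mu^*}(t)$ as $\mu\nearrow\mu^*$. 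Since $t\mapsto h^\mu(t)$ is non-decreasing and $t\mapsto g^\mu(t)$ non-increasing, for every $t>0$,
\[
h^{\mu^*}(t)-g^{\mu^*}(t)=\lim_{\mu\nearrow\mu^*}\big(h^\mu(t)-g^\mu(t)\big)\le\sup_{0<\mu<\mu^*}\big(h^\mu_\infty-g^\mu_\infty\big)\le\ell^*,
\]
so letting $t\to\infty$ gives $h^{\mu^*}_\infty-g^{\mu^*}_\infty\le\ell^*<\infty$; that is, vanishing happens at $\mu=\mu^*$, so $\mu^*\in\Sigma$. Combined with the previous paragraph, this proves the theorem.

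I expect the delicate step to be the passage $h^\mu(t)\to h^{\mu^*}(t)$, $g^\mu(t)\to g^{\mu^*}(t)$ as $\mu\nearrow\mu^*$; if a direct appeal to continuous dependence on $\mu$ is felt to need more detail, an alternative is to assume for contradiction that spreading happens at $\mu^*$. Then $h^{\mu^*}_\infty=-g^{\mu^*}_\infty=+\infty$, so there is $T_0>0$ with $\lambda_p(\mathcal{L}_{(g^{\mu^*}(T_0),h^{\mu^*}(T_0))}+f'(0))>0$; using the monotonicity and continuity in $\mu$ of $g^\mu(T_0),h^\mu(T_0)$ together with the continuity of $\lambda_p$ in the interval length (Proposition \ref{prop-EV}(i)), the same strict inequality holds for all $\mu<\mu^*$ close to $\mu^*$. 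For such $\mu$, comparing $u^\mu$ on $[g^\mu(T_0),h^\mu(T_0)]$ with the solution of the fixed-domain problem of Proposition \ref{prop-single} having the same data at $t=T_0$ (via Lemma \ref{lemma-MP2}), and then running the flux estimate based on $J(0)>0$ exactly as in the proof of Theorem \ref{thm-vanishing}, would force $(h^\mu)'(t)\ge c>0$ for all large $t$ and hence $h^\mu_\infty=+\infty$, i.e. spreading at $\mu<\mu^*$, contradicting $\mu\in\Sigma$.
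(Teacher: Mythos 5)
Your proposal is correct and takes essentially the same route as the paper: define $\Sigma$ as the set of $\mu$ for which vanishing occurs, set $\mu^*=\sup\Sigma$ (positive and finite by Theorems \ref{thm-mu-small} and \ref{thm-mu-large}), use the monotonicity in $\mu$ from Corollary \ref{corollary-mu-increasing} to get $(0,\mu^*)\subseteq\Sigma$ and spreading for $\mu>\mu^*$, and then show $\mu^*\in\Sigma$. Your closing step is a direct limit argument (vanishing for $\mu<\mu^*$ forces $h^\mu_\infty-g^\mu_\infty\le\ell^*$ by Theorem \ref{thm-vanishing}, and continuity in $\mu$ at each fixed time passes this bound to $\mu^*$), while the paper argues by contradiction at a fixed finite time $T$; both rest on exactly the same ingredients, so the difference is only cosmetic.
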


\begin{proof}
Define
$$
\Sigma=\left\{\mu:~\mu>0~\text{such that}~h_\infty -g_\infty  < + \infty\right\}.
$$

 By Theorems \ref{thm-mu-small} and \ref{thm-mu-large} we see that
  $0< \sup \ \Sigma < +\infty$.
Again we let $(u_{\mu}, g_{\mu}, h_{\mu})$ denote the solution of  (\ref{101}), and set $h_{\mu,\infty}:=\lim_{t\rightarrow+\infty}h_{\mu}(t)$, $g_{\mu,\infty}:=\lim_{t\rightarrow+\infty}g_{\mu}(t)$, and
denote $ \mu^*  =\sup  \Sigma$.

According to  Corollary \ref{corollary-mu-increasing}, $u_{\mu},  -g_{\mu},  h_{\mu}$  are increasing in $\mu>0$. This immediately gives that if $\mu_1 \in \Sigma$, then $\mu  \in \Sigma$ for any $\mu<\mu_1$ and if $\mu_1 \not\in \Sigma$, then $\mu  \not\in \Sigma$ for any $\mu > \mu_1$. Hence it follows that
\begin{equation}\label{pf-thm-mu-separate}
(0, \mu^*) \subseteq \Sigma,\ \ ( \mu^*, +\infty) \cap \Sigma=\emptyset.
\end{equation}

To complete the proof, it remains to show  that $\mu^* \in  \Sigma$. Suppose that $\mu^* \not\in  \Sigma$. Then $h_{\mu^*,\infty}=  -g_{\mu^*,\infty} =+\infty$. Thus there exists $T>0$ such that $-g_{\mu^*}(t)> \ell^*,  h_{\mu^*}(t) > \ell^*$ for $t\geq T$. Hence  there exists $\epsilon >0$ such that for $| \mu - \mu^* |  <\epsilon$,
$-g_{ \mu}(T)> \ell^*/2  ,  h_{ \mu}(T) > \ell^*/2  $, which implies $\mu\not\in\Sigma$. This clearly contradicts (\ref{pf-thm-mu-separate}).
Therefore $ \mu^* \in  \Sigma$.
\end{proof}

\section*{Acknowledgments}
\noindent

The second author was partially supported by the Australian Research Council. The third author was partially supported by NSF  of China (11431005).
The fourth author was partially supported by NSF of China (11671180, 11731005).
J.-F. Cao would like to thank the China Scholarship Council (201606180067) for financial support during the period
of the overseas study and to express her gratitude to
the School of Science and Technology, University of New
England, for its kind hospitality.

\end{document}